\numberwithin{equation}{section}
\definecolor{darkgreen}{rgb}{0,0.7,0.1}
\theoremstyle{plain}
\newtheorem{theorem}{Theorem}[section]
\newtheorem{proposition}[theorem]{Proposition}
\newtheorem{lemma}[theorem]{Lemma}
\newtheorem{corollary}[theorem]{Corollary}
  \theoremstyle{remark}
\newtheorem{remark}[theorem]{Remark}
  \theoremstyle{definition}
\def\R{\mathbb{R}}
\def\Z{\mathbb{Z}}
\begin{document}
\subjclass[2010]{35J92, 35A24, 35B05, 35B09, 35B45.}

\keywords{Quasilinear elliptic equations, Shooting method, A priori estimates, Existence and multiplicity, Neumann boundary conditions.}

\title[]{A priori bounds and multiplicity of positive solutions for $p$-Laplacian Neumann problems with sub-critical growth}

\author[A. Boscaggin]{Alberto Boscaggin}
\address{Alberto Boscaggin\newline\indent 
Dipartimento di Matematica
\newline\indent
Università di Torino
\newline\indent
via Carlo Alberto 10, 10123 Torino, Italia}
\email{alberto.boscaggin@unito.it}

\author[F. Colasuonno]{Francesca Colasuonno}
\address{Francesca Colasuonno\newline\indent
Dipartimento di Matematica
\newline\indent
Alma Mater Studiorum Università di Bologna
\newline\indent
piazza di Porta S. Donato 5, 40126 Bologna, Italia}
\email{francesca.colasuonno@unibo.it}

\author[B. Noris]{Benedetta Noris}
\address{Benedetta Noris
\newline \indent Laboratoire Ami\'enois de Math\'ematique Fondamentale et Appliqu\'ee\newline\indent
Universit\'e de Picardie Jules Verne\newline\indent
33 rue Saint- Leu, 80039 AMIENS, France}
\email{benedetta.noris@u-picardie.fr}

\date{\today}

\begin{abstract} 
Let $1<p<+\infty$ and let $\Omega\subset\mathbb R^N$ be either a ball or an annulus.
We continue the analysis started in [Boscaggin, Colasuonno, Noris, ESAIM Control Optim. Calc. Var. (2017)], concerning quasilinear Neumann problems of the type
\[
-\Delta_p u = f(u), \quad u>0 \mbox{ in } \Omega, \quad
\partial_\nu u = 0 \mbox{ on } \partial\Omega.
\]
We suppose that $f(0)=f(1)=0$ and that $f$ is negative between the two zeros and positive after. In case $\Omega$ is a ball, we also require that $f$ grows less than the Sobolev-critical power at infinity. We prove a priori bounds of radial solutions, focusing in particular on solutions which start above 1. As an application, we use the shooting technique to get existence, multiplicity and oscillatory behavior (around 1) of non-constant radial solutions.  
\end{abstract}

\maketitle

\section{Introduction}
\subsection{Motivations, assumptions and main results}
In this paper we carry on the analysis started in \cite{ABF}, concerning quasilinear Neumann problems of the type
\begin{equation}\label{main}
\left\{
\begin{array}{ll}
\vspace{0.1cm}
-\Delta_p u = f(u) & \mbox{ in } \Omega \\
\vspace{0.1cm}
u > 0 & \mbox{ in } \Omega \\
\partial_\nu u = 0 & \mbox{ on } \partial\Omega, \\
\end{array}
\right.
\end{equation}
where $1<p<+\infty$, $\nu$ is the outer unit normal of $\partial \Omega$ and $\Omega\subset\mathbb R^N$ ($N\geq1$) is a radial domain which can be either an annulus 
$$
\Omega=\mathcal A(R_1,R_2):=\{ x \in \mathbb{R}^N \, : \, R_1 < \vert x \vert < R_2 \},\quad 0<R_1<R_2<+\infty,
$$
or a ball
$$
\Omega=\mathcal B(R_2):=\{ x \in \mathbb{R}^N \, : \vert x \vert < R_2 \},\quad 0<R_2<+\infty.
$$ 
Before stating precisely the hypotheses on $f$, we can have in mind, as a prototype nonlinearity, the following difference of pure powers 
\begin{equation}\label{eq:prototype}
f(s)=s^{q-1}-s^{r-1}\quad \mbox{with }\begin{cases}
p\le r<q\quad&\mbox{if }\Omega=\mathcal A(R_1,R_2),\\
p\le r<q<p^*&\mbox{if }\Omega=\mathcal B(R_2),
\end{cases}
\end{equation}
where, as usual,
\[
p^*:=\begin{cases}
\frac{Np}{N-p} \quad & \text{if } 1<p<N \\
+\infty & \text{if } p\geq N 
\end{cases}
\]
is the Sobolev critical exponent.

One of the main features of problem \eqref{main}, besides its radial symmetry, is that it admits a non-zero constant solution, say $u\equiv 1$, see condition $(f_\textrm{eq})$ below. 
We address existence of non-constant radial solutions $u\in W^{1,p}(\Omega)$ of \eqref{main}, as well as multiplicity, a priori bounds and oscillatory behavior around the constant solution. 
The recent literature has shown that, in presence of homogeneus Neumann boundary conditions, quasilinear equations of the type \eqref{main} typically admit many positive solutions (in addition to the constant one) and that the set of positive solutions has a rich structure. We quote here the articles 
\cite{LinNi88,LinNiTakagi88,budd1991asymptotic, AdimurthiYadava,AdimurthiYadava97,yadava1992note,WangWeiYan2010,SerraTilli2011, GrossiNoris,secchi2012increasing,BNW,BonheureGrossiNorisTerracini2015, bonheure2016multiple,ma2016bonheure,ColasuonnoNoris,BonheureCasterasNoris2017, CowanMoameni}, some of which will be discussed later.
Let us illustrate this fact in the semilinear case $p=2$, when $\Omega$ is a ball and $f(s)=s^{q-1}-s$ with $q>2$. In \cite{bonheure2016multiple} Bonheure, Grumiau and Troestler prove, via bifurcation analysis, the existence of multiple positive solutions, satisfying $u(0)<1$, and oscillating an increasing number of times around the constant 1. These solutions are a priori bounded independently of $q$, so that a certain type of solution (with a precise oscillatory behavior) which exists for a certain value of $q$ persists for larger values.
Under the additional assumption $q<2^*$, they further obtain solutions with $u(0)>1$, having similar properties. Some interesting numerical simulations (cf. \cite[Section 6, Fig. 16]{bonheure2016multiple}) suggest that the bifurcation branches of solutions with $u(0)>1$ and $q>2^*$ can have unpredictable behaviors, so that a type of solution which exists for a certain value of $q$ may not be present for subsequent values.

In \cite{ABF}, we investigate problem \eqref{main} in the general quasilinear case $1 < p < +\infty$, under a minimal set of assumptions for the nonlinear term $f$. More precisely, we show, via the shooting method, that existence, multiplicity and oscillatory behavior of radial solutions to \eqref{main} with $u(0)<1$ can still be provided, even with some remarkable novelties with respect to the semilinear case. We stress that no growth assumptions at infinity are required (just, $f(s) > 0$ for $s > 1$, see $(f_\textrm{eq})$ below).
Furthermore, in \cite[Section 6]{ABF}, we perform some numerical simulations which suggest that solutions with $u(0)>1$ do exist also in the quasilinear setting, for subcritical nonlinearities.

Motivated by the numerical evidence and by the analytical results for the semilinear case, in this paper we continue the description of \eqref{main}, by analyzing the existence and qualitative properties of solutions with $u(0)>1$ for general subcritical nonlinearities and for any $1<p<+\infty$. 
With respect to our previous paper \cite{ABF}, here we are facilitated by the fact of having a subcritical nonlinearity, which provides the needed compactness.
On the other hand, the main difficulty in the present paper is to obtain some a priori bounds on the solutions: while it is easy to show that solutions with $u(0)<1$ are a priori bounded (one can use energy methods as in \cite[Theorem 2.4]{bonheure2016multiple}), it costs us a big effort to obtain an analogous property for solutions with $u(0)>1$. 
Roughly speaking, we can say that a sequence of radial $W^{1,p}$-solutions in a ball, with zero radial derivative at the boundary, are allowed to explode only at the origin; the condition $u(0)<1$ automatically prevents this fact.

Let us now state the assumptions on $f$ which are required throughout the paper. As in \cite{ABF}, we assume:
\begin{itemize}
\item[$(f_\textrm{reg})$] $f \in \mathcal{C}([0,+\infty)) \cap \mathcal{C}^1((0,+\infty))$;
\item[$(f_\textrm{eq})$] $f(0) = f(1) = 0$, $f(s) < 0$ for $0 < s < 1$ and $f(s) > 0$ for $s > 1$; 
\item[$(f_0)$] $\liminf_{s\to0^+} \frac{f(s)}{s^{p-1}}  >-\infty$.
\end{itemize}
Furthermore, in case the domain is a ball, we impose in addition either
\begin{itemize}
\item[$(f_\mathrm{subl})$]  there exists $M \in (0,+\infty)$ such that $\limsup_{s\to+\infty} \frac{f(s)}{s^{p-1}}\leq M$;
\end{itemize}
or
\begin{itemize}
\item[$(f_\mathrm{subc})$] $\limsup_{s\to+\infty} \frac{f(s)}{s^{p-1}}=+\infty$ and $\exists \ \eta\in(0,1)$ s.t. $\limsup_{s\to+\infty} \frac{f^*(s)s}{F(\eta s)}<p^*$,
where $F(s):=\int_1^s f(\sigma)\,d\sigma$ for every $s>0$, and $f^*$ is the smallest non-decreasing function satisfying $f^*(s)\geq f(s)$ for every $s\geq1$, namely
$$
f^*(s):=\max_{\sigma\in [1,s]} f(\sigma)\quad\mbox{ for }s\geq1.
$$
\end{itemize}

\begin{remark}\label{rem:sottocriticita} 
(i) The two assumptions at infinity  $(f_\mathrm{subl})$ and $(f_\mathrm{subc})$, required when $\Omega$ is a ball, are complementary in the set of subcritical nonlinearities. They allow to consider both $(p-1)$-sublinear functions $f$, when $(f_\mathrm{subl})$ holds, and functions $f$ which are not $(p-1)$-sublinear, but have Sobolev-subcritical growth, when $(f_\mathrm{subc})$ is satisfied. Examples of functions satisfying $(f_\mathrm{subc})$ will be given below (see point (iii) of this Remark); on the other hand, we observe here that the subcritical growth of $f$ is a necessary condition for $(f_\mathrm{subc})$ to be fulfilled. More precisely, let $1<p<N$, then $(f_\mathrm{subc})$ implies that there exist $\varepsilon,\,C_\varepsilon>0$ and $s_\varepsilon>1$ such that
\begin{equation}\label{eq:subcrit}
f(s) \le  C_\varepsilon s^{p^*-1-\varepsilon} \quad \text{ for every } s>s_\varepsilon.
\end{equation}
In order to show it, it is enough to observe that, being $F$ monotone increasing and $f^*\geq f$ for $s\geq 1$, $(f_\mathrm{subc})$ provides $\limsup_{s\to+\infty} \frac{f(s)s}{F(s)}<p^*$.
Hence there exist $\varepsilon>0$ and $s_\varepsilon > 1$ such that
\[
\frac{f(s)}{F(s)}\leq \frac{p^*-\varepsilon}{s} \quad \text{ for every } s> s_\varepsilon.
\]
Integrating the previous inequality in $(s_\varepsilon,s)$, with $s>s_\varepsilon$, we deduce that there exists $C_\varepsilon>0$ such that \eqref{eq:subcrit} holds.

\medskip

\noindent (ii) The reason why the case of the annulus does not require any additional assumptions of the type $(f_{\mathrm{subl}})$ or $(f_\mathrm{subc})$ relies on the fact that for $R_1>0$ problem \eqref{main} is intrinsically subcritical. Indeed, in this case it is possible to define the change of variables $t(r)=\int_{R_1}^r \xi^{-\frac{N-1}{p-1}}d\xi$ that allows to reduce problem \eqref{main} to 
$$
\begin{cases}
-\Delta_p w=a(t) f(w) & \mbox{ in } (0,T), \\
w>0& \mbox{ in } (0,T), \\
w'(0)=w'(T)=0,
\end{cases}\qquad a(t)=r(t)^{\frac{p(N-1)}{p-1}},
$$
where $r(t)$ is the inverse of $t(r)$ (cf. \cite[Remark 2.5]{ABF}). So, the unknown $w(t)=u(r(t))$ must solve a $p$-Laplacian equation which is very similar to the one for $u$ (apart from the weight $a(t)$ which by the way is positive and bounded) and, being 1-dimensional, is always subcritical. 

\medskip

\noindent (iii) The prototype function defined in \eqref{eq:prototype} clearly satisfies $(f_\mathrm{reg})$, $(f_\mathrm{eq})$, and $(f_0)$. Moreover, in the case the domain is a ball, it further satisfies $(f_\mathrm{subc})$: being $r<q<p^*$, there exists $\eta\in (0,1)$ such that $q<\eta^qp^*$, hence
\[
\limsup_{s\to+\infty} \frac{s^q-s^r}{(\eta s)^q/q-(\eta s)^r/r}<p^*.
\]
Similarly, we have that $(f_\mathrm{subc})$ is satisfied whenever $f$ behaves asymptotically (as $s\to+\infty$) as the prototype function \eqref{eq:prototype}, so that assumption $(f_\mathrm{subc})$ allows for a broad class of nonlinearities.

\medskip

\noindent (iv) We note in passing that it is also possible to modify conditions $(f_\mathrm{reg})$ and $(f_\mathrm{eq})$ in such a way to allow the nonlinearity $f$ to have more than one positive zero, we refer to Remark \ref{rem:+zeri} for more details. This corresponds, for problem \eqref{main}, to admit more than one constant solution. Needless to say that the number 1 appearing in condition $(f_\mathrm{eq})$
can be replaced by any $u_0>0$ for which $u\equiv u_0$ is a constant solution of the problem. Its exact value does not play any role. \hfill $\diamond$
\end{remark}

We are now ready to state the main results of the paper. 
We recall that $W^{1,p}$-solutions of \eqref{main} are of class $C^{1,\gamma}(\bar\Omega)$ for some $\gamma>0$, see \cite[Theorem 2]{Lieberman}.
Our first result is an a priori $C^1$-bound for radial solutions of \eqref{main}, either in an annulus under hypotheses $(f_{\mathrm{reg}})$, $(f_{\mathrm{eq}})$, and $(f_0)$, or in a ball under the additional assumption $(f_\mathrm{subl})$ or $(f_\mathrm{subc})$.

\begin{theorem}\label{th:bounds}
Let $\Omega$ be either the annulus $\mathcal A(R_1,R_2)$ or the ball $\mathcal B(R_2)$ and let $f$ satisfy $(f_{\mathrm{reg}})$, $(f_\mathrm{eq})$ and $(f_0)$. 
In the case $\Omega=\mathcal B(R_2)$ assume in addition either $(f_\mathrm{subl})$ or $(f_\mathrm{subc})$.
Then there exists a constant $C=C(f,\Omega,p)>0$ such that every radial solution $u$ of \eqref{main} satisfies
\[
\|u\|_{C^1(\Omega)}\leq C.
\]
\end{theorem}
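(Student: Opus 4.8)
The plan is to pass to the radial ODE and couple an energy (Pohozaev-type) monotonicity with the vanishing-flux identity produced by the Neumann condition. Writing $u=u(r)$, $r=|x|$, problem \eqref{main} reads
\[
-\big(r^{N-1}|u'|^{p-2}u'\big)'=r^{N-1}f(u)\ \text{ on }(R_1,R_2),\qquad u>0,\qquad u'(R_1)=u'(R_2)=0,
\]
with the convention $R_1=0$ and $u'(0)=0$ when $\Omega=\mathcal B(R_2)$ (legitimate since $u\in C^{1,\gamma}(\bar\Omega)$ by \cite[Theorem 2]{Lieberman}). Setting $F(s)=\int_1^s f$ and $E(r)=\tfrac{p-1}{p}|u'(r)|^p+F(u(r))$, the equation gives at once
\[
E'(r)=-\tfrac{N-1}{r}\,|u'(r)|^p\le 0,
\]
so $E$ is non-increasing; since by $(f_\mathrm{eq})$ one has $F\ge 0$ with equality only at $s=1$, and $E=F\circ u$ at the two endpoints (where $u'=0$), this yields $0\le F(u(R_2))=E(R_2)\le E(r)\le E(R_1)=F(u(R_1))$. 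Integrating the equation over $(R_1,R_2)$ and using the boundary conditions gives moreover
\[
\int_{R_1}^{R_2}r^{N-1}f(u)\,dr=0,
\]
whence by $(f_\mathrm{eq})$ every non-constant solution satisfies $\min u<1<\max u$. A first consequence is that the $C^1$-bound is equivalent to an $L^\infty$-bound: once $\max u\le C$, then $\tfrac{p-1}{p}|u'(r)|^p=E(r)-F(u(r))\le F(u(R_1))\le\max_{[0,C]}F$. So it remains to bound $\max u$.

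Consider first $\Omega=\mathcal A(R_1,R_2)$, where no growth condition is available. If $F$ is bounded, say $F\le F_\infty<+\infty$, the monotonicity of $E$ forces $\tfrac{p-1}{p}|u'|^p\le F_\infty$, a gradient bound depending only on $f$, which with $\min u<1$ bounds $\max u$. If instead $F(s)\to+\infty$, I argue by contradiction: given solutions $u_n$ with $M_n:=\max u_n\to+\infty$, the inequalities above --- evaluated at an interior local minimum of $u_n$, whose value is $\le 1$ by the sign of $f$, or at $R_1$ --- rule out that the maximum is attained at $R_2$; hence $u_n$ has a branch, say $[\sigma_n,\rho_n]$, on which it decreases strictly from $M_n$ to the level $1$. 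Along this branch $E_n\le E_n(\sigma_n)=F(M_n)$, so $|u_n'|\le(\tfrac{p}{p-1}(F(M_n)-F(u_n)))^{1/p}$, and changing variable $s=u_n(r)$ (so $f(s)\,ds=dF(s)$) yields
\[
\int_{\sigma_n}^{\rho_n}r^{N-1}f(u_n)\,dr\ \ge\ R_1^{N-1}\Big(\tfrac{p-1}{p}\Big)^{1/p}\int_1^{M_n}\frac{f(s)\,ds}{(F(M_n)-F(s))^{1/p}}\ =\ R_1^{N-1}\Big(\tfrac{p}{p-1}\Big)^{\frac{p-1}{p}}F(M_n)^{\frac{p-1}{p}}\ \longrightarrow\ +\infty,
\]
while the rest of $\int_{R_1}^{R_2}r^{N-1}f(u_n)\,dr$ is bounded below by $R_2^{N-1}(R_2-R_1)\min_{[0,1]}f$; this contradicts the vanishing total flux. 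This settles Theorem \ref{th:bounds} for the annulus.

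For $\Omega=\mathcal B(R_2)$ the difficulty concentrates, literally, at the origin. Using the sign of $f$ above $1$, a solution with $u(0)>1$ decreases strictly until it first reaches the level $1$, so $\max u=u(0)$; a solution with $u(0)<1$ satisfies $E\le E(0)=F(u(0))<F(0)$ and, since both $(f_\mathrm{subl})$ and $(f_\mathrm{subc})$ force $F(s)\to+\infty$ (for $(f_\mathrm{subc})$ through $f^*\to+\infty$ and the lower bound on $F(\eta s)$), is bounded as in the annulus; and $u(0)=1$ forces $u\equiv1$. So, arguing by contradiction, we may take $u_n(0)=M_n\to+\infty$. The flux computation now breaks down: integrating the equation shows that the positive mass on the descent branch $(0,\rho_n)$ equals $\rho_n^{N-1}|u_n'(\rho_n)|^{p-1}$, which the vanishing-flux identity keeps \emph{bounded}, since the weight $r^{N-1}$ degenerates exactly where $u_n\approx M_n$. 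The substitute is a Pohozaev-type identity: multiplying the equation by $r\,u_n'$ and by $u_n$ and integrating over $(0,R_2)$, one obtains
\[
\int_0^{R_2}r^{N-1}\Big(N\,F(u_n)-\tfrac{N-p}{p}\,f(u_n)\,u_n\Big)\,dr=R_2^{N}F(u_n(R_2))
\]
(and an analogous formula, with a boundary term in $|u_n'(\rho_n)|$, on $\mathcal B(\rho_n)$). Since $\tfrac{N-p}{p}=\tfrac{N}{p^*}$, the role of the \emph{sharp} form of $(f_\mathrm{subc})$ --- namely $\limsup_{s\to+\infty}\tfrac{f^*(s)s}{F(\eta s)}<p^*$ --- is to make the integrand $N F(u_n)-\tfrac{N}{p^*}f(u_n)u_n$ bounded below by a fixed positive multiple of $F(\eta u_n)$ when $u_n$ is large; the dilation $\eta\in(0,1)$ enters because, on the concentration scale $\mu_n\to0$ at the origin, $u_n$ is comparable to $M_n$ only on a \emph{proportional} sub-ball, so it is $F(\eta M_n)$, not $F(M_n)$, that one can insert. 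Combining this lower bound with the estimate $f^*(M_n)=o(M_n^{p^*-1})$ coming from \eqref{eq:subcrit} and with the energy/gradient bounds of the first step, one shows that the left-hand side grows faster than the right-hand side can absorb: the desired contradiction. The case $(f_\mathrm{subl})$ is softer --- from $f(u)\le M u^{p-1}+C$, the identity $\int_\Omega f(u)=0$, the fact that $u$ crosses $1$, and the one-dimensional Poincar\'e--Wirtinger inequality for radial functions, one bounds $\max u$ directly.

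I expect this last step --- extracting the contradiction at the origin in the ball, where the Sobolev exponent $p^*$ and the dilation parameter $\eta$ enter in an essential way --- to be by far the main obstacle; everything else is either a chain of soft reductions (plus the standard regularity used for the $C^1$-from-$L^\infty$ passage) or the essentially one-dimensional energy estimate used for the annulus.
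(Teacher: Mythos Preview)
Your annulus argument is correct and rather nice: the time--of--flight/flux computation along the monotone branch from $M_n$ down to the level $1$ (change of variable $s=u_n(r)$, plus the energy bound $|u_n'|^p\le\tfrac{p}{p-1}(F(M_n)-F(u_n))$) gives the clean lower bound $R_1^{N-1}(\tfrac{p}{p-1})^{1/p'}F(M_n)^{1/p'}$ on the positive flux, and this contradicts the zero total flux. This is different from the paper, which instead applies Gronwall to $E$ to get $E(r)\ge F(M_n)\,e^{-(N-1)p'(R_2-R_1)/R_1}$ and then argues via an elastic--type property; both arguments hinge on $R_1>0$ in the same way.

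The ball case, however, has genuine gaps. First, in your Pohozaev identity on $[0,R_2]$ you never control the boundary term $R_2^{N}F(u_n(R_2))$; ``the left--hand side grows faster than the right--hand side can absorb'' is an assertion, not an argument. In fact this term \emph{is} bounded---since $u_n(0)>1$ and $u_n'(R_2)=0$, the solution must possess a first interior critical point $\tau_n$ with $u_n(\tau_n)\in(0,1)$, and then energy monotonicity gives $F(u_n(R_2))=E(R_2)\le E(\tau_n)=F(u_n(\tau_n))\le F(0)$---but you omit this step, and it is exactly the mechanism (Proposition~\ref{Pr:d*} in the paper) that converts the blow--up estimate into the a~priori bound. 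Second, even granting that, to make the left--hand side diverge you need a \emph{quantitative} lower bound on the region $\{u_n\ge\eta M_n\}$, namely $r_0(M_n)\gtrsim (M_n/f^*(M_n))^{1/p}$; you allude to a ``concentration scale $\mu_n$'' and to $f^*(M_n)=o(M_n^{p^*-1})$ but never state or prove this estimate (it is the content of Lemma~\ref{lem:r0estimate}). Third, your $(f_{\mathrm{subl}})$ route via Poincar\'e--Wirtinger does not work: for $p\le N$ the embedding $W^{1,p}(\mathcal B(R_2))\hookrightarrow L^\infty$ fails, so from a bound on $\int|\nabla u|^p$ and $\int f(u)=0$ you cannot ``bound $\max u$ directly.''

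The paper's organization is different in spirit: it works with the \emph{shooting} solution $u_d$ (Cauchy data $u(0)=d$, $u'(0)=0$, no Neumann condition at $R_2$), proves via the Pohozaev/$r_0(d)$ machinery the elastic property $H_d(r_1(d))\to\infty$ as $d\to\infty$, and only then brings in the Neumann condition through the observation above about the first critical point to conclude $d<d^*$. This shooting viewpoint is what lets the Pohozaev estimate be used on $[0,r]$ with $r$ free, sidestepping the need to control $F(u(R_2))$ at the outset.
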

For the semilinear case $p=2$, when $f$ is the prototype subcritical nonlinearity with $r=2$, some a priori estimates are also proved in \cite[Section 2]{bonheure2016multiple}. The authors find $L^\infty$ and $H^1$-bounds for the solutions of \eqref{main} when $\Omega$ is a general bounded domain. 
As already mentioned, they also obtain $C^1$-estimates for radial solutions $u$ on a ball, for possibly supercritical prototype nonlinearites, under the additional assumption $u(0)<1$.
We remark that our result applies to any radial solution of \eqref{main}, regardless of their value at zero, and includes the case of more general nonlinearities $f$ and the case $p\neq 2$. 
For a priori $L^\infty$-estimates of positive solutions to similar subcritical problems under Dirichlet boundary conditions, we refer for instance to \cite{GidasSpruck,McKennaReichel,CastroPardo} for the semilinear case, to \cite{AzizehClement,Ruiz,Zou_apriori,Dong} and references therein for the quasilinear case. 

In order to state our existence results, we introduce $\lambda_{k}^{\textnormal{rad}}$ as the $k$-th radial eigenvalue of $-\Delta_p u = \lambda|u|^{p-2}u$ in $\Omega$ with Neumann boundary conditions; moreover, we further assume
\begin{itemize}
\item[$(f_1)$] there exists $C_1 := \lim_{s\to1} \frac{f(s)}{\vert s - 1 \vert^{p-2} (s-1)}$.
\end{itemize}
Notice that, by $(f_\textrm{eq})$, it holds $C_1 \in [0,+\infty]$; the differentiability of $f$ at $s =1$ implies that $C_1 = f'(1) \in [0,+\infty)$ if $p = 2$ and $C_1 = 0$ if $1 < p < 2$.

Hereafter, in order to treat simultaneously the cases of the annulus and of the ball, we adopt the convention $R_1=0$ when $\Omega=\mathcal B(R_2)$. Furthermore, since we are interested only in radial solutions, with abuse of notation we write $u(r)=u(x)$ for $|x|=r$.

\begin{theorem}\label{th:C1>0}
Under the same assumptions as in Theorem \ref{th:bounds}, suppose that $(f_1)$ holds with $C_1 > \lambda_{k+1}^{\textnormal{rad}}$ for some integer $k \geq 1$. Then there exist at least $2k$ distinct non-constant radial solutions $u_1,\ldots,u_{2k}$ to \eqref{main}. 
Moreover, we have
\begin{itemize}
\item[(i)] $u_j(R_1)>1$ for every $j=1,\ldots,k$;
\item[(ii)] $u_j(R_1)<1$ for every $j=k+1,\ldots,2k$;
\item[(ii)] $u_j(r)-1$ and $u_{j+k}(r)-1$ have exactly $j$ zeros for $r\in(R_1,R_2)$, for every $j=1,\ldots,k$.
\end{itemize}
In particular, if $C_1 = +\infty$, then \eqref{main} has infinitely many distinct non-constant radial solutions satisfying $u_j(R_1)>1$ and infinitely many satisfying $u_j(R_1)<1$.
\end{theorem}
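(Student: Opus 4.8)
The plan is to set up the shooting method for the radial ODE. Writing the equation in radial coordinates, a radial solution corresponds to a solution $u=u(r)$ of
\[
-\bigl(r^{N-1}|u'|^{p-2}u'\bigr)' = r^{N-1}f(u) \quad\text{on }(R_1,R_2),
\]
with $u'(R_1)=u'(R_2)=0$ (recall the convention $R_1=0$ in the ball, where the condition at $0$ is the natural regularity requirement). First I would introduce, for each initial datum $d=u(R_1)>0$ with $d\neq1$, the unique solution $u(\cdot;d)$ of the Cauchy problem with $u(R_1)=d$, $u'(R_1)=0$; standard ODE theory together with $(f_\textrm{reg})$ and $(f_0)$ gives local existence and uniqueness, and the a priori bound of Theorem \ref{th:bounds} (which applies to any solution of \eqref{main}, hence in particular rules out blow-up once we know the solution stays a ``solution'' of the problem) will be used to guarantee that the relevant orbits are defined on the whole of $[R_1,R_2]$. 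The key object is the \emph{rotation number}: passing to a generalized Pr\"ufer-type phase-angle $\theta(r;d)$ associated with the planar system for $(u-1,\,r^{N-1}|u'|^{p-2}u')$, one counts how many times the orbit winds around the constant solution $u\equiv1$. Non-constant solutions of \eqref{main} correspond exactly to values of $d$ for which $u(\cdot;d)$ satisfies the Neumann condition $u'(R_2;d)=0$, i.e. for which the phase angle lands at a ``half-integer'' position at $r=R_2$.

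Next I would analyze the two limiting regimes of the shooting parameter. As $d\to1$, the nonlinearity is governed by its behavior near $1$: assumption $(f_1)$ with $C_1>\lambda_{k+1}^{\textnormal{rad}}$ forces the linearized (homogeneous $p$-Laplacian) problem at $u\equiv1$ to be ``fast rotating'', so that $u(\cdot;d)-1$ oscillates at least $k+1$ times around $0$ on $(R_1,R_2)$ for $d$ close enough to $1$ on each side — this is a continuity/comparison argument with the eigenvalue equation $-\Delta_p v=C_1|v|^{p-2}v$, whose $k$-th radial eigenfunction has $k$ interior zeros. In the opposite regime I would show that the rotation is small: for $d$ sufficiently large (using $(f_\textrm{subl})$ or $(f_\textrm{subc})$ in the ball to keep the orbit under control via Theorem \ref{th:bounds}, and for $d$ slightly above $1$ we cannot avoid the fast rotation, so instead the ``slow'' end is $d\to+\infty$) the solution stays large and monotone-like on $(R_1,R_2)$, producing zero or very few oscillations. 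An entirely parallel analysis below $1$: for $d\to0^+$, using $(f_\textrm{eq})$ (so $f<0$ on $(0,1)$) and $(f_0)$, the orbit cannot oscillate much. Since the rotation number (number of zeros of $u(\cdot;d)-1$ plus a boundary correction) is a continuous integer-valued-at-solutions function of $d$, an intermediate value argument on each of the intervals $d\in(1,+\infty)$ and $d\in(0,1)$ produces, for each $j=1,\dots,k$, at least one $d>1$ and one $d<1$ giving a solution with exactly $j$ interior zeros of $u-1$. This yields the $2k$ solutions, the sign of $u_j(R_1)-1$ being built into the construction, and the nodal count (iii) as well. The final assertion for $C_1=+\infty$ follows by letting $k\to+\infty$.

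The main obstacle, and the place where Theorem \ref{th:bounds} is essential, is the well-definedness and continuity of the shooting map for initial data $d>1$: a priori an orbit starting above $1$ could blow up before reaching $R_2$, and controlling the phase angle requires knowing that $u(\cdot;d)$ and $u'(\cdot;d)$ remain bounded uniformly — precisely the content of the a priori $C^1$-bound, whose proof for $d>1$ was described in the introduction as the hard part of the paper. A secondary technical point is making the Pr\"ufer-angle machinery work for $p\neq2$: the natural phase plane uses the functions $\sin_p,\cos_p$ adapted to the $p$-Laplacian, and one must check that the angle is monotone in $r$ at the moments it crosses the axes corresponding to $u=1$, which uses $(f_\textrm{eq})$ (the sign condition $f(s)(s-1)>0$ for $s\neq1$) — this guarantees the counting is robust. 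Once these are in place, the existence and multiplicity follow from the continuity of the rotation number and the gap between its values at the two ends of each shooting interval.
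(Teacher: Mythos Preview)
Your overall strategy---shooting, $p$-Pr\"ufer angle, ``fast rotation'' near $d=1$ by comparison with the eigenvalue problem, ``slow rotation'' at the other end, then intermediate value---matches the paper's. But you have misidentified the key technical ingredient and inverted the logical dependence between two of the main results.

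Global existence and continuous dependence for the Cauchy problem \eqref{eq:shooting} hold for \emph{every} $d\geq 0$, independently of any a priori bound on Neumann solutions: this is Lemma~\ref{lem:cont_dep}, proved via \cite{ReiWal97}, and requires only $(f_\textrm{reg})$ and $(f_0)$. So orbits starting above $1$ cannot blow up before $R_2$, and Theorem~\ref{th:bounds} is \emph{not} what makes the shooting map well-defined. In fact the dependence runs the other way: Theorem~\ref{th:bounds} is \emph{derived} from Proposition~\ref{Pr:d*}, not used to prove it.

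What actually pins down the ``slow'' end for $d$ large is the elastic-type property \eqref{eq:explosion_d} of Section~\ref{Sec:elastic}: it says $\min_{[R_1,r_1(d)]}(u_d+|u_d'|)\to+\infty$ as $d\to+\infty$, which forces (Proposition~\ref{Pr:d*}) $u_d'<0$ on all of $(R_1,R_2]$ for $d\geq d^*$, hence $\theta_{d^*}(R_2)<\pi_p$. This is where $(f_\mathrm{subl})$/$(f_\mathrm{subc})$ enter, through Pohozaev--Pucci--Serrin identities when $\Omega$ is a ball; the subcriticality is not used to ``keep the orbit under control via Theorem~\ref{th:bounds}'' but to prove the elastic property itself. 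Your sentence ``the solution stays large and monotone-like'' is the right conclusion, but you have not named the mechanism that delivers it, and that mechanism is the heart of the paper. (Two minor slips: near $d=1$ one gets $\theta_d(R_2)>k\pi_p$, i.e.\ more than $k$ half-turns, not ``$k{+}1$ oscillations''; and it is the $(k{+}1)$-st radial eigenfunction that has $k$ interior zeros.)
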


\begin{theorem}\label{th:C1=0}
Under the same assumptions as in Theorem \ref{th:bounds}, suppose that $(f_1)$ holds with $C_1 = 0$. Then
for any integer $k \geq 1$ and any $\varepsilon > 0$ there exists $R_*(k,\varepsilon) > 0$ such that if 
$$
R_1<\varepsilon R_2\quad\mbox{and}\quad R_2 > R_*(k,\varepsilon),
$$
then problem \eqref{main} has at least $4k$ distinct non-constant radial solutions.

Denoting these solutions by $u_1^+,\ldots,u_{2k}^+$, $u_1^-,\ldots,u_{2k}^-$, we have that
\begin{itemize}
\item[(i)] $u_j^\pm(R_1)>1$ for every $j=1,\ldots,k$;
\item[(ii)] $u_j^\pm(R_1)<1$ for every $j=k+1,\ldots,2k$;
\item[(ii)] $u_j^\pm(r)-1$ and $u_{j+k}^\pm(r)-1$ have exactly $j$ zeros for $r\in(R_1,R_2)$, for every $j=1,\ldots,k$.
\end{itemize}
\end{theorem}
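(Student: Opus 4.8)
The plan is to combine the a priori bound of Theorem~\ref{th:bounds} with a shooting analysis on the radial ODE, along the same lines as Theorem~\ref{th:C1>0}, but now in the degenerate regime $C_1=0$, where the linearization at $u\equiv 1$ gives no oscillation and one has to force it by geometry (making $R_2$ large and $R_1$ comparatively small). Writing the radial problem in the variable $t$ via the change of variables recalled in Remark~\ref{rem:sottocriticita}(ii) (for the ball one works directly, with $R_1=0$), a solution of \eqref{main} corresponds to a solution $w$ of $-(a(t)|w'|^{p-2}w')' = a(t)\,b(t)\,f(w)$ (or the genuinely radial form) on $[0,T]$ with $w'(0)=w'(T)=0$, and such solutions are parametrized by the shooting datum $d=w(0)=u(R_1)$. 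The key object is the \emph{rotation number} (winding of the trajectory $(w,w')$, in suitable $p$-polar coordinates, around the equilibrium $(1,0)$) as a function of the shooting datum; a non-constant solution with exactly $j$ interior zeros of $w-1$ is produced each time this count, measured between the two Neumann conditions, hits the value $j$. I would track this separately for data $d>1$ (giving the $u_j^+$) and for data $0<d<1$ (giving the $u_j^-$), so that the sign of $u_j^\pm(R_1)-1$ is built into the construction, yielding conclusions (i) and (ii), while (iii) is exactly the statement that the selected solution realizes the prescribed rotation.

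The analysis splits into three regimes of the shooting datum. First, for $d$ close to $1$: since $C_1=0$, the equilibrium is a degenerate rest point and trajectories starting near it wind \emph{slowly}; one shows the rotation accumulated over $[0,T]$ tends to $0$ as $d\to 1$, so no spurious solutions appear there and the count starts from $0$. Second, for $d$ large (resp. $d$ near $0^+$): here one needs a \emph{lower} bound on the rotation number that diverges. This is where the hypotheses $R_1<\varepsilon R_2$ and $R_2>R_*(k,\varepsilon)$ enter — enlarging $R_2$ (equivalently enlarging $T$, or rescaling the weight) makes the ``time'' available for rotation arbitrarily long, so that for $R_2$ large the trajectory completes at least $k$ (half-)turns around $(1,0)$ before reaching $t=T$; the condition $R_1<\varepsilon R_2$ guarantees the weight $a(t)$ does not degenerate in a way that kills this rotation (and on the ball $R_1=0$ so it is automatic). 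Third, uniformly in $d$: the a priori bound $\|u\|_{C^1}\le C$ of Theorem~\ref{th:bounds} guarantees all trajectories stay in a fixed compact region of the phase plane, so the rotation number is a well-defined, finite, continuous function of $d$ on each of the two intervals; continuity plus the intermediate value theorem then yields, for each $j=1,\dots,k$, a datum $d_j^+>1$ and a datum $d_j^-\in(0,1)$ at which the rotation over $[0,T]$ equals $j$ and the endpoint condition $w'(T)=0$ is met, i.e.\ $2k$ solutions on each side, $4k$ in all, with distinct zero counts hence distinct.

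The main obstacle is the second regime: obtaining a rotation lower bound that genuinely diverges as $R_2\to\infty$, \emph{uniformly} over all large $d$ and all admissible $d\to 0^+$, despite the presence of the non-constant, possibly degenerating weight $a(t)$ and the quasilinear operator. One must show that, away from the equilibrium, the angular speed in $p$-polar coordinates is bounded below by a positive quantity times $a(t)$-dependent factors whose integral over $[0,T]$ grows with $R_2$; the condition $R_1<\varepsilon R_2$ is precisely calibrated so that a fixed fraction of the $t$-interval carries a weight bounded away from $0$, while $(f_{\mathrm{eq}})$ (negativity of $f$ on $(0,1)$, positivity on $(1,\infty)$) ensures the vector field rotates in a definite sense around $(1,0)$ on each side. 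Care is also needed near the origin $t=0$ for the ball, where the equation is singular, but this is handled exactly as in \cite{ABF} and in the proof of Theorem~\ref{th:C1>0}. Once the divergence of the rotation is in hand, the counting and the passage from the $t$-variable back to $u$ on $\Omega$ are routine, and the distinctness of the $4k$ solutions follows since solutions with different numbers of interior zeros of $u-1$, or with $u(R_1)$ on opposite sides of $1$, cannot coincide.
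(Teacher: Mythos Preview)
There is a genuine structural gap: your picture of how the rotation number behaves in the shooting parameter $d$ is inverted in the crucial regime, and as a consequence you both miscount the solutions and misread the $\pm$ notation.

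You claim that for $d$ large the rotation accumulated over $[R_1,R_2]$ diverges. This is exactly what the elastic-type property (Section~\ref{Sec:elastic} and Proposition~\ref{Pr:d*}) \emph{rules out}: for $d\ge d^*$ the solution $u_d$ is strictly decreasing on $(R_1,R_2]$, so $\theta_d(R_2)<\pi_p$. Thus the rotation is \emph{small} both near $d=1$ (by $C_1=0$ and Lemma~\ref{lem:d_close1}) and near $d=d^*$. The heart of the proof is the opposite of what you outline: one must produce an \emph{intermediate} value $\hat d_k\in(1,d^*)$ at which the rotation exceeds $k\pi_p$ (Proposition~\ref{prop:d_k}, proved in the Appendix by a spiral-comparison argument that traps the orbit in an annular region of the phase plane and uses $R_1<\varepsilon R_2$, $R_2>R_*(k,\varepsilon)$ to force enough angular advance). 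With this ``hill'' shape of $d\mapsto\theta_d(R_2)$, the intermediate value theorem yields, for each $j=1,\dots,k$, \emph{two} data $1<d_j^-<\hat d_k<d_j^+<d^*$ with $\theta_{d_j^\pm}(R_2)=j\pi_p$: one on the way up, one on the way down. That is $2k$ solutions with $u(R_1)>1$, and the analogous argument on $(0,1)$ (from \cite{ABF}) gives the other $2k$. In the paper's notation, $u_j^+$ and $u_j^-$ for $j\le k$ \emph{both} satisfy $u(R_1)>1$; the $\pm$ distinguishes the two preimages of $j\pi_p$, not the sign of $u(R_1)-1$.

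Two further remarks. First, the a~priori bound of Theorem~\ref{th:bounds} applies only to actual Neumann solutions, not to every shooting trajectory $(u_d,v_d)$; it cannot be invoked to keep the phase-plane curves in a compact set for all $d$. What plays that role is again the elastic property, which controls the behaviour for large $d$. Second, your appeal to rotation blow-up as $d\to 0^+$ is also off: near $d=0$ the trajectory leaves the region $u>0$ or sticks near $u\equiv 0$, and one does not get unbounded winding; the $d\in(0,1)$ side has the same small--large--small pattern as above.
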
 

We observe that the condition $0=R_1 < \varepsilon R_2$ is satisfied for every $\varepsilon>0$ when $\Omega=\mathcal B(R_2)$, hence in this case $R_*=R_*(k)$ can be chosen to be independent of $\varepsilon$. 

Let us now briefly comment on the shape of the solutions found in Theorems~\ref{th:C1>0} and \ref{th:C1=0}. Solutions indexed from 1 to $k$ are above 1 at $r=R_1$ and start decreasing, whereas the other solutions start below the value 1 and in an increasing way. In Theorem~\ref{th:C1=0} there are two solutions having both the same monotonicity at $R_1$ and the same number of oscillations around the constant solution 1. We distinguish them with the symbol $\pm$ which is meant to describe the distance of $u_j^\pm(R_1)$ from 1:  $|u_j^+(R_1)-1|>|u_j^-(R_1)-1|$.

We notice that Theorems~\ref{th:C1>0} and \ref{th:C1=0} are almost complementary, in the following sense (for simplicity, we focus here on the case of the ball $\Omega=\mathcal B(R_2)$). Whenever $C_1 > 0$, recalling that $\lambda_k^{\mathrm{rad}}(\mathcal B(R_2)) \to 0^+$ for $R_2 \to +\infty$, Theorem \ref{th:C1>0} yields the existence of $2k$ radial solutions for $R_2$ large enough. In the same flavor, if $C_1 = 0$ Theorem \ref{th:C1=0} gives the existence of $4k$ radial solutions for $R_2 > R^*(k)$. Actually an intermediate result, giving the existence of $4k - 2(l-1)$ radial solutions for some $l=1,\ldots,k$, is possible depending on the precise value of the constant $C_1$; we refer to Remark \ref{rem:C1=0} for the precise statement.

The existence and the oscillatory behavior of solutions with $u(R_1)<1$, namely solutions indexed from $k+1$ to $2k$ in Theorems \ref{th:C1>0} and \ref{th:C1=0}, have already been proved in \cite[Theorems 1.2 and 1.4]{ABF} respectively, even for possibly supercritical $f$. 
This is the reason why, in Section \ref{Sec:main} below, we focus only on solutions with $u(R_1)>1$.
As already noticed, the existence of such solutions seems to be closely related to the subcriticality assumption required on $f$ in the present paper (see the next section of the Introduction for a technical explanation of this). 

Taking into account that the prototype nonlinearity \eqref{eq:prototype}
satisfies $(f_1)$ with 
$$
C_1=\left\{\begin{array}{ll}
0 & \text{ if } 1<p<2, \\
q-r & \text{ if } p=2, \\
+\infty & \text{ if } p>2, \\
\end{array}\right.
$$
we have the following corollary of Theorems \ref{th:C1>0} and \ref{th:C1=0}. We observe that this result is coherent with the numerical simulations in \cite[Section 3]{ABF}.
 
\begin{corollary}\label{cor:modello}
For $1<p<+\infty$ consider the Neumann problem
\begin{equation}\label{modello}
\left\{
\begin{array}{ll}
\vspace{0.1cm}
-\Delta_p u + u^{r-1} = u^{q-1}  & \mbox{ in } \Omega, \\
\vspace{0.1cm}
u > 0 & \mbox{ in } \Omega, \\
\partial_\nu u = 0 & \mbox{ on } \partial\Omega, \\
\end{array}
\right.
\end{equation}
with $p\le r<q$ in the case $\Omega=\mathcal{A}(R_1,R_2)$, and $p\le r <q<p^*$ in the case $\Omega=\mathcal B(R_2)$. Then:
\begin{itemize}
\item[(i)] for $p > 2$, \eqref{modello} has infinitely many distinct non-constant radial solutions satisfying $u(R_1)>1$ and infinitely many satisfying $u(R_1)<1$;
\item[(ii)] for $p = 2$ and $q-r > \lambda_{k+1}^{\textnormal{rad}}$ for some $k\ge1$, \eqref{modello} has at least $k$ non-constant radial solutions with $u(R_1)>1$ and $k$ non-constant radial solutions with $u(R_1)<1$;
\item[(iii)] for $1 < p < 2$, 
for any integer $k \geq 1$ and any $\varepsilon > 0$ there exists $R_*(k,\varepsilon) > 0$ such that if $R_1 < \varepsilon R_2$ and $R_2 > R_*(k,\varepsilon)$, then problem \eqref{modello} has at least $2k$ non-constant radial solutions with $u(R_1)>1$ and $2k$ non-constant radial solutions with $u(R_1)<1$.
\end{itemize}
\end{corollary}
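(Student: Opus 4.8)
The plan is to obtain all three items as a direct application of Theorems~\ref{th:C1>0} and \ref{th:C1=0} to the prototype nonlinearity $f(s)=s^{q-1}-s^{r-1}$, for which problem \eqref{modello} coincides with \eqref{main}. The only real content is to verify that this specific $f$ fulfils the hypotheses of those theorems (and of Theorem~\ref{th:bounds} on which they rest), which is essentially already done in the Introduction.

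First I would check the standing assumptions. As observed in Remark~\ref{rem:sottocriticita}(iii), $f$ satisfies $(f_\mathrm{reg})$, $(f_\mathrm{eq})$ and $(f_0)$ in both geometries, and, when $\Omega=\mathcal B(R_2)$, it satisfies $(f_\mathrm{subc})$: since $r<q<p^*$, one can choose $\eta\in(0,1)$ with $q<\eta^q p^*$, and then $\limsup_{s\to+\infty}\frac{f^*(s)s}{F(\eta s)}=\limsup_{s\to+\infty}\frac{s^q-s^r}{(\eta s)^q/q-(\eta s)^r/r}<p^*$. Hence the assumptions of Theorem~\ref{th:bounds}, and therefore the common assumptions of Theorems~\ref{th:C1>0} and \ref{th:C1=0}, all hold. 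Next I would identify the constant $C_1$ of $(f_1)$. Near $s=1$ one has $f(s)=s^{q-1}-s^{r-1}\sim (q-r)(s-1)$, so that $\frac{f(s)}{|s-1|^{p-2}(s-1)}\sim (q-r)|s-1|^{2-p}$ as $s\to1$; this gives $C_1=q-r$ if $p=2$, $C_1=0$ if $1<p<2$ (as $|s-1|^{2-p}\to0$), and $C_1=+\infty$ if $p>2$ (as $|s-1|^{2-p}\to+\infty$), exactly the values recorded before the statement.

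It then remains to invoke the two existence theorems according to the value of $p$. If $p>2$, then $C_1=+\infty>\lambda_{k+1}^{\textnormal{rad}}$ for every integer $k\geq1$, so the final assertion of Theorem~\ref{th:C1>0} yields infinitely many non-constant radial solutions with $u(R_1)>1$ and infinitely many with $u(R_1)<1$; this is item (i). If $p=2$ and $q-r=C_1>\lambda_{k+1}^{\textnormal{rad}}$ for some $k\geq1$, Theorem~\ref{th:C1>0} produces $2k$ distinct non-constant radial solutions $u_1,\dots,u_{2k}$, of which $u_1,\dots,u_k$ satisfy $u_j(R_1)>1$ and $u_{k+1},\dots,u_{2k}$ satisfy $u_j(R_1)<1$ (with the stated oscillation count); this is item (ii). Finally, if $1<p<2$, then $C_1=0$, so Theorem~\ref{th:C1=0} applies: for every $k\geq1$ and $\varepsilon>0$ there is $R_*(k,\varepsilon)>0$ such that, whenever $R_1<\varepsilon R_2$ and $R_2>R_*(k,\varepsilon)$, problem \eqref{modello} admits $4k$ distinct non-constant radial solutions $u_1^\pm,\dots,u_{2k}^\pm$; the $2k$ solutions $u_1^\pm,\dots,u_k^\pm$ satisfy $u(R_1)>1$ and the $2k$ solutions $u_{k+1}^\pm,\dots,u_{2k}^\pm$ satisfy $u(R_1)<1$, which is item (iii). (Recall that for $\Omega=\mathcal B(R_2)$ the condition $R_1<\varepsilon R_2$ holds trivially, $R_1=0$.)

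I do not expect a genuine obstacle here: the argument is a plug-in of the explicit nonlinearity into the abstract results. The only two steps requiring a line of computation are the verification of $(f_\mathrm{subc})$ in the ball case (choosing $\eta$ with $q<\eta^q p^*$) and the evaluation of the limit defining $C_1$, both of which are elementary; everything else is a bookkeeping of how the $2k$, respectively $4k$, solutions split according to the sign of $u(R_1)-1$.
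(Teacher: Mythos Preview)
Your proposal is correct and matches the paper's own justification: the corollary is obtained exactly by verifying (via Remark~\ref{rem:sottocriticita}(iii)) that the prototype $f(s)=s^{q-1}-s^{r-1}$ satisfies the standing hypotheses, computing $C_1$ from $f(s)\sim (q-r)(s-1)$ near $s=1$, and then invoking Theorem~\ref{th:C1>0} when $p\geq 2$ and Theorem~\ref{th:C1=0} when $1<p<2$.
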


To conclude this section we would like to mention that a possible future direction of research is the investigation of solutions with $u(R_1)>1$ in the critical case.
Some results in this direction are already contained in the classical papers \cite{AdimurthiYadava,AdimurthiYadava97}, where the shooting method (after having converted the equation in \eqref{main} into an equivalent one via the Emden-Fowler transformation) is indeed used to study the behavior of solutions with $u(R_1)$ large. 
A possible related paper is \cite{CastroKurepa-critical}, where a technique quite similar to the one we use is employed to provide energy estimates for a Dirichlet critical problem. We also mention \cite{delPinoWei}, where the authors consider a semilinear Neumann problem with an exponential nonlinearity in a bounded domain of $\mathbb R^2$ which is possibly non-radial. The techniques therein are quite different, since the authors use the Lyapunov-Schmidt reduction method. As already noticed, the numerical simulations suggest that it is probably a very difficult task to prove existence and properties of solutions with $u(R_1)>1$ to supercritical problems.

\subsection{Ideas of the proofs and organization of the paper}
The proof of Theorem~\ref{th:bounds} essentially relies on an elastic-type property that holds for radial solutions of \eqref{main}. This property is the core of the paper and is fundamental for both proving the a priori bound and the existence of solutions with $u(R_1)>1$; in its proof we strongly use the subcriticality of $f$.
To explain this property, we consider a radial solution $u=u(r)$ of \eqref{main}, with $u(R_1)>0$ and we call $r_1\in (R_1, R_2]$ the biggest radius for which $u$ is positive in $[R_1,r_1)$. 
The elastic-type property says that if $u(R_1)$ is large, then $u(r) + \vert u'(r) \vert$ in $[R_1,r_1)$ is also large. 
When $\Omega$ is an annulus or when $\lim_{s\to+\infty}\int_1^s f(\sigma)d\sigma$ is finite, the proof of this property relies on an energy analysis in the phase plane:  it is a quite simple consequence of the fact that the energy of radial solutions is non-increasing (cf. Proposition \ref{prop:explosion1}). In the remaining case, i.e., when $\Omega$ is a ball and $\lim_{s\to+\infty}\int_1^s f(\sigma)d\sigma=+\infty$, the proof is more involved and, in order to perform the energy analysis, we need to derive some identities \`a la Pohozaev and Pucci-Serrin. The technique used was first introduced by Castro and Kurepa in \cite{CastroKurepa} for the Laplacian, then generalized to the $p$-Laplacian in \cite{ElHachimiThelin}, and finally refined and further generalized to non-homogeneous $p$-Laplacian-like operators in \cite{GHMZ}. 
We take inspiration mainly from the latter by Garc{\'\i}a-Huidobro, Man{\'a}sevich, and Zanolin, even though this article, as all the previous ones, deals with the Dirichlet problem and the equation is of the type $Lu = g(u) + V(\vert x \vert)$ (here, $L$ is the differential operator) with $g: \mathbb{R} \to \mathbb{R}$ satisfying subcriticality assumptions at $\pm \infty$; therefore some delicate modifications of the arguments therein are needed to adapt this technique to our situation.

For our goals, the most important consequence of this elastic property is that radial solutions of \eqref{main} cannot have $u(R_1)$ too large, see Proposition~\ref{Pr:d*}. This fact, together with the monotonicity of the energy, are the main ingredients of the proof of Theorem \ref{th:bounds}. 

The technique used to prove Theorems \ref{th:C1>0} and \ref{th:C1=0} is, as in \cite{ABF}, the shooting method. This approach is classical in the qualitative theory of ODEs; we mention here the papers \cite{MontefuscoPucci,BarutelloSecchiSerra} where the shooting method is used for proving existence of solutions to radial semilinear and quasilinear problems in some related situations. The idea of the proof is essentially the same as in \cite{ABF}, see also \cite{FBproceedings}. Since we are interested only in radial solutions, we rewrite the problem in the one-dimensional radial form. In turn, the one-dimensional second-order equation can be seen as the following first-order planar system 
\begin{equation}\label{planar-system}
u'=r^{-(N-1)(p'-1)}|v|^{p'-2}v,\quad v'=-r^{N-1}f(u),\qquad r\in(R_1,R_2),
\end{equation}
where $1/p'=1-1/p$.
Here the shooting method begins: instead of looking for solutions of the system that satisfy Neumann boundary conditions, we impose the initial conditions 
$$u(R_1)=d,\quad v(R_1)=0\quad\mbox{ with }d\in(0,+\infty),$$ 
and look for initial data $(d,0)$ such that the corresponding solution $(u_d,v_d)$ to the Cauchy problem satisfies $v(R_2)=0$. 
In Section \ref{Sec:shooting}, we recall global existence, uniqueness and continuous dependence on the initial data for the Cauchy problem. In particular, uniqueness implies that if $d\neq 1$, the solution $(u_d,v_d)$ is such that $(u_d(r),v_d(r)) \neq (1,0)$ for every $r\in[R_1,R_2]$. Therefore, it is possible to pass to polar-like coordinates about the point $(1,0)$ in the phase plane $(u,v)$. Furthermore, the assumption $(f_\mathrm{eq})$ and the equation guarantee that the solutions of the Cauchy problem wind clockwise around $(1,0)$. We look for solutions that make an integer number of half-turns around $(1,0)$ in the phase plane $(u,v)$. Hence, with this scheme in mind, all that we have to do is to count the number of turns performed by the solutions around $(1,0)$. 
To this aim, when $C_1>\lambda_{k+1}^\mathrm{rad}$, we estimate the number of turns of the solutions shot from $d$ close to 1, in terms of the number of turns of the $(k+1)$-th radial eigenfunction of the Neumann $p$-Laplacian eigenvalue problem. In this way, we show that the solutions corresponding to $d$ close to 1 perform more than $k$ half-turns.  
On the other hand, as a consequence of the elastic-type property, we know that for $d$ above a certain threshold $d^*$, the solution $(u_d,v_d)$ of the Cauchy problem performs less than one half-turn. By the continuous dependence on $d$, there must exist $k$ values of $d$, $1<d_k<d_{k-1}<\dots< d_1<d^*$, to which correspond the solutions of Theorem~\ref{th:C1>0}, cf. Figure \ref{fig:C1>0}. We stress that in this argument it is essential to have the threshold $d^*$.
\begin{figure}[h!t]
\begin{center}
\includegraphics[scale=.8]{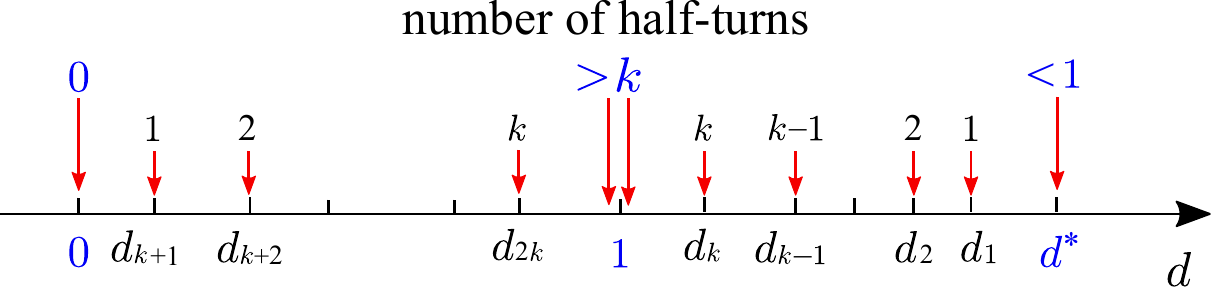}
\caption{Qualitative representation, in the case $C_1>\lambda_{k+1}^{\mathrm{rad}}$, of the initial data  $d_1,\,d_2,\dots,d_k \in(1,d^*)$ to which correspond the solutions $(u_{d_{j}},v_{d_{j}})$, with $u_{d_{j}}(R_1)=d_j>1$, performing exactly $j$ clockwise half-turns around $(1,0)$ in the phase plane for every $j=1,\dots,k$.   Analogously, to the initial data $d_{k+1},\,d_{k+2},\dots,d_{2k} \in (0,1)$ represented in the picture, correspond the solutions $(u_{d_{j+k}},v_{d_{j+k}})$, with $u_{d_{j+k}}(R_1)=d_{j+k}<1$, performing exactly $j$ clockwise half-turns around $(1,0)$ in the phase plane for every $j=1,\dots,k$.
The solutions whose existence is stated in Theorem \ref{th:C1>0} are $u_j=u_{d_j}$ and $u_{j+k}=u_{d_{j+k}}$ for $j=1,\dots,k$.}\label{fig:C1>0}
\end{center}
\end{figure}

When $C_1=0$ the proof is complicated by the fact that both in a neighborhood of 1 and in a neighborhood of $d^*$ the solutions perform less than one half-turn. Nevertheless, by means of an argument introduced in \cite{BosZan13} (see Proposition~\ref{prop:d_k} below for a more detailed description), we are able to find a $\hat d_k\in(1,d^*)$  such that $(u_{\hat d_k},v_{\hat d_k})$ performs more than $k$ half-turns. Hence, the continuous dependence argument can be used both in $(1,\hat d_k)$ and in $(\hat d_k,d^*)$ to get in total $2k$ solutions with $u(R_1)>1$, cf. Figure \ref{fig:C1=0}.
\begin{figure}[h!t]
\begin{center}
\includegraphics[scale=.8]{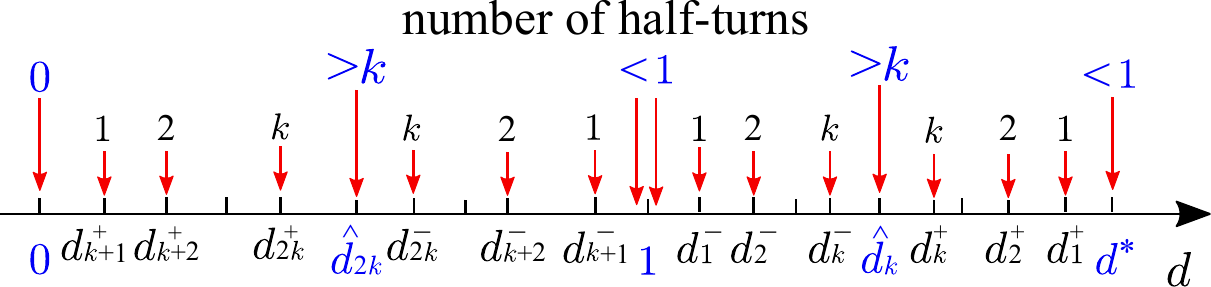}
\caption{Qualitative representation, in the case $C_1=0$, of the initial data $d^\pm_1,\,d^\pm_2,\dots,d^\pm_k \in(1,d^*)$ to which correspond the solutions $(u_{d^\pm_{j}},v_{d^\pm_{j}})$, with $u_{d^\pm_{j}}(R_1)=d^\pm_{j}>1$, performing exactly $j$ clockwise half-turns around $(1,0)$ in the phase plane for every $j=1,\dots,k$. The value $\hat d_k\in(1,d^*)$ is the initial datum for which the number of half-turns of the corresponding solution is greater than $k$. All data $d^+_{j}$ are on the right of $\hat d_k$, while the data $d^-_j$ are on the left of the same value.
Analogously, to the initial data $d^\pm_{k+1},\,d^\pm_{k+2},\dots,d^\pm_{2k} \in (0,1)$ represented in the picture, correspond the solutions $(u_{d^\pm_{j+k}},v^\pm_{d_{j+k}})$, with $u^\pm_{d_{j+k}}(R_1)=d^\pm_{j+k}<1$, performing exactly $j$ clockwise half-turns around $(1,0)$ in the phase plane for every $j=1,\dots,k$. Here we have denoted with $\hat d_{2k}\in(0,1)$ the initial datum for which the number of half-turns of the corresponding solution is greater than $k$. The situation is specular with respect to the case $d>1$. 
In both cases, the $-$ data are closer to 1 than the + ones.
The solutions whose existence is stated in Theorem~\ref{th:C1=0} are $u^\pm_j=u_{d^\pm_j}$ and $u^\pm_{j+k}=u_{d^\pm_{j+k}}$ for $j=1,\dots,k$.}\label{fig:C1=0}
\end{center}
\end{figure}

To conclude the Introduction, we would like to mention that other techniques have already been used to attack similar problems set in a ball of $\mathbb R^N$. In \cite{bonheure2016multiple, ma2016bonheure} the semilinear case is studied by means of the bifurcation theory of Crandall and Rabinowitz. Moreover, variational methods are used for proving the existence of an increasing solution in the semilinear case (cf. \cite{BNW,SerraTilli2011}) and in the quasilinear case (cf. \cite{secchi2012increasing,ColasuonnoNoris}, see also \cite{Fproceedings}) for general possibily supercritical nonlinearities. In particular, the techinque used in \cite[Section 4]{BNW} for $p=2$ and in \cite{ColasuonnoNoris} for $p>2$ can be applied also to annular domains, in this case it provides the existence of at least two monotone solutions, one increasing and one decreasing, cf. also \cite[Section 3]{BonheureGrossiNorisTerracini2015}.   

The paper is organized as follows. In Section \ref{Sec:shooting} we describe the shooting approach and recall some useful properties of the Cauchy problem associated to \eqref{planar-system}. Section~\ref{Sec:elastic} is entirely devoted to the proof of the elastic-type property, while Section~\ref{Sec:main} contains the proofs of Theorems~\ref{th:bounds}, \ref{th:C1>0} and \ref{th:C1=0}, as well as some hints for possible variants of our main results. In a final Appendix, we give for the reader's convenience the proof of a technical result (Proposition \ref{prop:d_k}) used along the proof of Theorem \ref{th:C1=0}.

\section{The shooting approach}\label{Sec:shooting}
In the rest of the paper we assume that $\Omega$ is either the annulus $\mathcal A(R_1,R_2)$ or the ball $\mathcal B(R_2)$, with the convention that $R_1=0$ in the case $\Omega=\mathcal B(R_2)$, and we only consider radial solutions of \eqref{main}.
We also suppose, without mentioning it explicitly, that $f$ satisfies $(f_{\mathrm{reg}})$, $(f_\mathrm{eq})$, and $(f_0)$.

Let us introduce a continuous extension $\hat f: \mathbb{R} \to \mathbb{R}$ of $f$ defined as follows
$$
\hat f(s)=\begin{cases}
	f(s) \quad&\text{if } s \geq 0,\\
	0 &\text{if } s < 0.
\end{cases}
$$
Writing the p-Laplacian operator in radial form, consider the following problem involving $\hat f$
\begin{equation}\label{ode2}
\begin{cases}
-\left(r^{N-1}\varphi_p(u')\right)'=r^{N-1}\hat f(u)\quad\mbox{in }(R_1,R_2),\\
u'(R_1)=u'(R_2)=0,
\end{cases}
\end{equation}
where $\varphi_p(s) = \vert s \vert^{p-2} s$ and the prime symbol $'$ denotes the derivative with respect to $r$. A maximum principle-type result proved in \cite{ABF} ensures that we can study problem \eqref{main} by looking for non-constant solutions of \eqref{ode2}.

\begin{lemma}{\cite[Lemma 2.1]{ABF}}\label{lem:rad}
$u$ is a radial solution of \eqref{main} if and only if $u$ solves \eqref{ode2} and $u\not\equiv -C$, with $C\geq0$.
\end{lemma}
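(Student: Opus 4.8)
The plan is to prove the two implications separately; the nontrivial content is the ``if'' direction, where a strong-maximum-principle argument enters. For the ``only if'' part I would start from a radial solution $u\in W^{1,p}(\Omega)$ of \eqref{main}: since by definition $u>0$ in $\Omega$ one has $\hat f(u)=f(u)$, and testing the weak formulation of $-\Delta_p u=f(u)$ against radial test functions shows at once that $r\mapsto u(r)$ solves the equation in \eqref{ode2} in the weak sense, the Neumann conditions $u'(R_1)=u'(R_2)=0$ being encoded in the vanishing of the boundary terms (and automatic at $R_1=0$ thanks to the $C^{1,\gamma}(\bar\Omega)$ regularity recalled above). Finally $u>0$ trivially excludes $u\equiv-C$ for every $C\ge0$.

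For the ``if'' direction, suppose $u$ solves \eqref{ode2} with $u\not\equiv-C$ for all $C\ge0$; equivalently, $u$, read as a radial function on $\Omega$, is a weak solution of $-\Delta_p u=\hat f(u)$ with homogeneous Neumann conditions. I would first obtain $u\ge0$ by using $u^-:=\max\{-u,0\}\in W^{1,p}(\Omega)$ as a test function: since $\hat f\equiv0$ on $(-\infty,0)$ while $u^-$ is supported in $\{u<0\}$, the right-hand side vanishes and one is left with $\int_\Omega|\nabla u^-|^p\,dx=0$, so $u^-$ is a nonnegative constant; were it positive, $u$ would be a negative constant, contradicting the hypothesis. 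Hence $u\ge0$, and also $u\not\equiv0$ (again by the hypothesis, now with $C=0$). Since $f$ is continuous and $u$ is bounded, $\hat f(u)\in L^\infty(\Omega)$ and thus $u\in C^1(\bar\Omega)$.

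The remaining, and main, point is to upgrade $u\ge0$ to $u>0$ in $\Omega$. Arguing by contradiction, assume $u(x_0)=0$ for some $x_0\in\Omega$ and set $U:=\{x\in\Omega:\ u(x)<1\}$, an open neighbourhood of $x_0$. On $U$ one has $0\le u<1$, so $\hat f(u)=f(u)\le0$ by $(f_\mathrm{eq})$; moreover, combining $(f_0)$ near $s=0^+$ with the continuity of $f$ on the compact interval $[0,1]$ yields a constant $C_0>0$ with $f(s)\ge-C_0s^{p-1}$ for all $s\in[0,1)$, hence $\Delta_p u\le C_0u^{p-1}$ on $U$. The function $\beta(s)=C_0s^{p-1}$ satisfies V\'azquez's condition $\int_{0^+}(s\beta(s))^{-1/p}\,ds=+\infty$, so V\'azquez's strong maximum principle applies on the connected component $U_0$ of $U$ containing $x_0$: since $u(x_0)=0$, we get $u\equiv0$ on $U_0$. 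A short topological argument then closes the proof: if $U_0=\Omega$ then $u\equiv0$ on $\Omega$, a contradiction; otherwise $\partial U_0\cap\Omega\neq\emptyset$ (a nonempty proper subset of the connected set $\Omega$ cannot be both open and closed), and every point of $\partial U_0\cap\Omega\subset\{u=1\}$ contradicts $u\equiv0$ on $\overline{U_0}$. Therefore $u>0$ in $\Omega$, so $\hat f(u)=f(u)$ and $u$ is a radial solution of \eqref{main}.

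I expect the strong-maximum-principle step to be the delicate part: one has to check that $(f_0)$ is precisely what is needed to cast $\Delta_p u\le C_0u^{p-1}$ in the form covered by V\'azquez's theorem, and then run the connectedness argument to turn the local conclusion ``$u\equiv0$ on $U_0$'' into a genuine contradiction with $u\not\equiv0$; the $u\ge0$ step and the ``only if'' direction are routine.
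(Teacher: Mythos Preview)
Your argument is correct. Note, however, that the present paper does not give a proof of this lemma: it is simply quoted from \cite[Lemma~2.1]{ABF}, with the paper only remarking that it is ``a maximum principle-type result proved in \cite{ABF}''. Your approach --- testing against $u^-$ to obtain $u\ge 0$, then using $(f_0)$ to cast the equation on $\{u<1\}$ as $\Delta_p u\le C_0 u^{p-1}$ and invoking V\'azquez's strong maximum principle, followed by the connectedness argument --- is the natural one and matches the description ``maximum principle-type'', so there is no discrepancy to report.
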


Proceeding as in \cite{ABF}, we adopt the shooting technique: for any $d\geq 0$ we consider the couple $(u_d,v_d)$ that is the unique solution of
\begin{equation}\label{eq:shooting}
\begin{cases}
u_d' = \varphi_p^{-1}\left(\frac{v_d}{r^{N-1}}\right) \quad & r\in (R_1,R_2)\\
v'_d = - r^{N-1}\hat f(u_d) \quad & r\in (R_1,R_2) \\
u_d(R_1) = d \\
v_d(R_1) = 0.
\end{cases}
\end{equation}
The uniqueness, global continuability and continuous dependence for \eqref{eq:shooting} have been proved in \cite{ReiWal97}, we refer to \cite[Lemma 2.2]{ABF} for further details. The last mentioned lemma is stated for $d\in [0,1]$, but the proof holds the same for any $d\geq0$; we warn the reader that the notation therein is different.

\begin{lemma}{\cite[Lemma 2.2]{ABF}}\label{lem:cont_dep}
For any $d\geq 0$, the solution $(u_d,v_d)$ of \eqref{eq:shooting} is unique
and can be defined on the whole $[R_1,R_2]$; moreover, 
if $(d_n)_{n}\subset[0,+\infty)$ is such that $d_n \to d \in [0,+\infty)$ as $n\to+\infty$, then $(u_{d_n}(r),v_{d_n}(r)) \to (u_d(r),v_d(r))$ uniformly for $r \in [R_1,R_2]$.
\end{lemma}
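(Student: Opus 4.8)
Here is the approach I would take. The statement contains three assertions --- global existence, uniqueness, and continuous dependence on $d$ --- and I would reconstruct all of them from the integral form of \eqref{eq:shooting}, in the spirit of \cite{ReiWal97}. The extended field is continuous, since $\hat f$ is continuous by construction and $(f_\mathrm{reg})$, and $\varphi_p^{-1}(s)=|s|^{\frac{1}{p-1}-1}s$ is continuous; moreover, using $v_d(R_1)=0$, solving \eqref{eq:shooting} amounts to finding a fixed point of
\[
(u,v)\ \longmapsto\ \left(\,d+\int_{R_1}^{r}\varphi_p^{-1}\!\left(\frac{v(\rho)}{\rho^{N-1}}\right)d\rho,\ -\int_{R_1}^{r}\sigma^{N-1}\hat f\big(u(\sigma)\big)\,d\sigma\right).
\]
When $\Omega=\mathcal A(R_1,R_2)$ the integrands are continuous on a compact interval and bounded on bounded sets, so Peano's theorem gives a local solution. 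When $\Omega=\mathcal B(R_2)$, i.e. $R_1=0$, the inner integral is $O(\rho^{N})$ as $\rho\to0^{+}$, hence $\rho^{-(N-1)}v(\rho)=O(\rho)$ and $\varphi_p^{-1}$ of it is $O(\rho^{p'-1})$, which is integrable near the origin; a Schauder fixed-point argument on $C([0,\delta])$ for $\delta$ small then produces a local solution and also shows $u_d'(r)=O(r^{p'-1})\to 0$ as $r\to 0$.

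For global continuability I would exploit the natural energy: multiplying $-(r^{N-1}\varphi_p(u_d'))'=r^{N-1}\hat f(u_d)$ by $u_d'$ and integrating shows that
\[
\mathcal E_d(r):=\frac{p-1}{p}\,|u_d'(r)|^{p}+\hat F\big(u_d(r)\big),\qquad \hat F(s):=\int_0^{s}\hat f(\sigma)\,d\sigma ,
\]
satisfies $\mathcal E_d'(r)=-\frac{N-1}{r}\,|u_d'(r)|^{p}\le 0$, so that $\mathcal E_d(r)\le \mathcal E_d(R_1)=\hat F(d)$ (recall $u_d'(R_1)=0$). By $(f_\mathrm{eq})$ the function $\hat F$ is zero on $(-\infty,0]$, non-increasing on $[0,1]$ and non-decreasing on $[1,+\infty)$, hence $\hat F\ge \hat F(1)$ everywhere; therefore $|u_d'(r)|^{p}\le \frac{p}{p-1}\big(\hat F(d)-\hat F(1)\big)$ uniformly in $r$, so $u_d$ and $v_d=r^{N-1}\varphi_p(u_d')$ stay bounded on $[R_1,R_2]$, and the local solution extends to the whole interval. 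In the ball case the singular term $\frac{N-1}{r}|u_d'|^p$ is harmless since $u_d'(r)=O(r^{p'-1})$ near $0$.

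The main obstacle is uniqueness, because $\varphi_p^{-1}$ is only Hölder near $0$ when $p>2$ and $\hat f$ is merely continuous at its zeros, so Picard--Lindelöf does not apply directly. I would argue locally: on any subinterval where $v_d\neq 0$ the field is locally Lipschitz in $(u,v)$ away from the zero set of $\hat f$, and a Gronwall estimate gives uniqueness there; across a zero $r_0$ of $u_d'$ with $\hat f(u_d(r_0))\neq 0$, the component $v_d$ is strictly monotone near $r_0$, so using $v_d$ as the independent variable transfers the argument past $r_0$; and at a zero $r_0$ of $u_d'$ with $\hat f(u_d(r_0))=0$, assumption $(f_\mathrm{eq})$ forces $u_d(r_0)\in\{0,1\}$ or $u_d(r_0)\le 0$, and a comparison with the corresponding constant solution --- crucially, $(f_0)$ together with $(f_\mathrm{eq})$ gives $|\hat f(s)|\le C|s|^{p-1}$ near $s=0$, the borderline condition that prevents departure from the equilibrium --- rules out branching; in particular $u\equiv 1$ is the unique solution for $d=1$. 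This is exactly what the uniqueness results of \cite{ReiWal97} provide under $(f_\mathrm{reg})$, $(f_\mathrm{eq})$, $(f_0)$.

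Finally, continuous dependence follows by combining uniqueness with compactness. If $d_n\to d$, the energy bound gives bounds for $\|u_{d_n}\|_{C^1}$ and $\|v_{d_n}\|_{C^1}$ uniform in $n$ (with $|u_{d_n}'|$ uniformly small near $r=0$ in the ball case), so $\{(u_{d_n},v_{d_n})\}$ is bounded and equicontinuous; by Arzelà--Ascoli a subsequence converges uniformly on $[R_1,R_2]$, and passing to the limit in the two integral equations, using the continuity of $\hat f$ and $\varphi_p^{-1}$ and dominated convergence, shows the limit solves \eqref{eq:shooting} with datum $d$, hence coincides with $(u_d,v_d)$ by the uniqueness just established. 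Since every subsequence has a further subsequence converging to the same limit, the whole sequence converges uniformly, which is the asserted continuous dependence. Apart from the uniqueness step, everything here is routine.
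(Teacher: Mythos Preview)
The paper does not actually prove this lemma: it is quoted verbatim from \cite[Lemma~2.2]{ABF}, and the surrounding text simply points to \cite{ReiWal97} for the underlying arguments, noting only that the statement there was given for $d\in[0,1]$ but extends without change to $d\ge 0$. So there is no ``paper's own proof'' to compare against beyond those citations.

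Your reconstruction is a reasonable outline of what the cited arguments contain, and the structure --- local existence via the integral formulation, global continuation through the monotone energy $\tfrac{p-1}{p}|u'|^{p}+\hat F(u)$, and continuous dependence via Arzel\`a--Ascoli plus uniqueness --- is sound. You correctly single out uniqueness as the only nontrivial step: for $p>2$ the map $\varphi_p^{-1}$ is merely H\"older at the origin, so the system is not Lipschitz on the line $v=0$, and the equilibrium at $(1,0)$ (and the degenerate line $u\le 0$) must be handled separately. Your use of $(f_0)$ to get $|\hat f(s)|\le C\,s^{p-1}$ near $s=0$ is exactly the ingredient \cite{ReiWal97} exploits to rule out branching from $u\equiv 0$; near $u=1$ the $C^1$ regularity of $f$ plays the analogous role. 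The sketch at the branching points is admittedly heuristic, but since you ultimately defer to \cite{ReiWal97} there --- as the paper itself does --- this is acceptable. One cosmetic point: your normalization $\hat F(s)=\int_0^s\hat f$ differs from the paper's $\int_1^s\hat f$, but this is immaterial for the a~priori bound.
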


The function $u_d$ solves \eqref{ode2} if and only if $(u_d,v_d)$ is such that $v_d(R_2) = 0$. Incidentally, notice that $(u_1(r),v_1(r)) \equiv (1,0)$, corresponding to the constant solution $1$ of \eqref{ode2}. To find solutions with $d \neq 1$, we rewrite, 
as a consequence of the uniqueness in Lemma \ref{lem:cont_dep}, system \eqref{eq:shooting} using the following polar-like coordinates around the point $(1,0)$ 
\begin{equation}\label{polari}
\begin{cases}
u(r)-1=\rho(r)^\frac{2}{p} \cos_p(\theta(r))\\
v(r)=-\rho(r)^\frac{2}{p'} \sin_p(\theta(r)),
\end{cases}
\end{equation}
where $(\cos_p,\sin_p)$ is the unique solution of the system $x'=-\varphi_{p'}(y)$, $y'=\varphi_p(x)$ with initial conditions $x(0)=1, \ y(0)=0$.
We refer to \cite{DelPinoElguetaManasevich89,DelPinoManasevichMurua92,FabryFayyad92} and to \cite[Lemma 2.3]{ABF} for some useful properties of the $p$-cosine and $p$-sine functions.
Via the change of coordinates \eqref{polari}, system \eqref{eq:shooting} is transformed into
\begin{equation}\label{sys2}
\left\{
\begin{array}{l}
\displaystyle \rho'=\frac{p}{2\rho}\, u' \, \left[\varphi_p(u-1)-r^{(N-1)p'} \hat f(u) \right] \vspace{0.2cm}  \\
\displaystyle \theta'=\frac{r^{N-1}}{\rho^2}\left[ (p-1)|u'|^p+(u-1) \hat f(u) \right],
\end{array}
\right.
\end{equation}
with the initial conditions 
\begin{equation}\label{ci2}
\begin{cases}
\rho(R_1) =(1-d)^{\frac{p}2}, \quad \theta(R_1) = \pi_p&\quad\mbox{if }d<1,\\
\rho(R_1) =(d-1)^{\frac{p}2}, \quad \theta(R_1) = 0&\quad\mbox{if }d>1.
\end{cases}
\end{equation}
We denote the corresponding solution by $(\rho_d,\theta_d)$. Clearly, the couple $(\rho_d,\theta_d)$ gives rise to a solution 
$u_d(r) = 1 + \rho_d(r)^{\frac{p}{2}} \cos_p(\theta_d(r))$ of \eqref{ode2} (and in turn of \eqref{main}, since the case of a negative constant is ruled out by $u_d(0) = d \geq 0$) if and only if $\theta_d(R_2)=k\pi_p$ for some $k\in\Z$, that is, if and only if the solution $(u_d,v_d)$ performs an integer number of half-turns around the point $(1,0)$; incidentally, note that such rotations always take place in the clockwise sense, since by \eqref{sys2} and $(f_\textrm{eq})$, the function $\theta_d(r)$ is monotone increasing.
For further convenience, we also observe that, by \eqref{polari},
\begin{equation}\label{eq:polar_relation}
r^{(N-1)p'}|u'|^p=\rho^2|\sin_p(\theta)|^{p'} \quad\text{and}\quad |u-1|^p=\rho^2|\cos_p(\theta)|^p.
\end{equation}
As an immediate consequence of Lemma \ref{lem:cont_dep}, \eqref{polari} and \cite[Lemma 2.3]{ABF}, we have the following.

\begin{corollary}\label{cor:limit_rho_as_d_to0}
If $(d_n)\subset[0,1)\cup(1,+\infty)$ is such that $d_n \to d \in [0,1)\cup(1,+\infty)$, then 
$$(\rho_{d_n}(r),\theta_{d_n}(r)) \to (\rho_d(r),\theta_d(r))\quad \mbox{uniformly in }r \in [R_1,R_2].$$ Furthermore, 
\begin{equation}\label{2.14}
\lim_{d\to1} \sup_{r\in [R_1,R_2]} \rho_d(r)=0.
\end{equation}
\end{corollary}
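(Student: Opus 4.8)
The plan is to reduce both statements to Lemma~\ref{lem:cont_dep} after first recording a closed-form, $\theta$-independent expression for $\rho_d$. Indeed, from the change of variables \eqref{polari} one reads off $|u_d-1|^p=\rho_d^2|\cos_p\theta_d|^p$ and $|v_d|^{p'}=\rho_d^2|\sin_p\theta_d|^{p'}$, so using the $p$-trigonometric identity $|\cos_p\theta|^p+(p-1)|\sin_p\theta|^{p'}\equiv1$ — a one-line energy computation for the system defining $(\cos_p,\sin_p)$, cf. \cite[Lemma 2.3]{ABF} — I obtain
\[
\rho_d(r)^2=|u_d(r)-1|^p+(p-1)|v_d(r)|^{p'}\qquad\text{for all }r\in[R_1,R_2],\ d\ge0.
\]
From here \eqref{2.14} is immediate: for any sequence $d_n\to1$, Lemma~\ref{lem:cont_dep} (with limit $d=1$, recalling $(u_1,v_1)\equiv(1,0)$) gives $(u_{d_n},v_{d_n})\to(1,0)$ uniformly on $[R_1,R_2]$, whence $\sup_r\rho_{d_n}(r)^2\le(\sup_r|u_{d_n}-1|)^p+(p-1)(\sup_r|v_{d_n}|)^{p'}\to0$.

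For the first assertion I would fix $d\neq1$ and a sequence $d_n\to d$ in $[0,1)\cup(1,+\infty)$, and note that since $d\neq1$ we may assume all the $d_n$ lie on the same side of $1$ as $d$; then, by \eqref{ci2}, $\theta_{d_n}(R_1)=\theta_d(R_1)$ and $\rho_{d_n}(R_1)=|d_n-1|^{p/2}\to|d-1|^{p/2}=\rho_d(R_1)$. Lemma~\ref{lem:cont_dep} gives $(u_{d_n},v_{d_n})\to(u_d,v_d)$ uniformly on $[R_1,R_2]$; composing with the displayed formula and with the map $(a,b)\mapsto\sqrt{|a|^p+(p-1)|b|^{p'}}$, which is uniformly continuous on bounded sets, yields $\rho_{d_n}\to\rho_d$ uniformly. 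Moreover, by uniqueness in Lemma~\ref{lem:cont_dep} one has $(u_d(r),v_d(r))\neq(1,0)$ for every $r$, so $m:=\min_{[R_1,R_2]}\rho_d>0$ and, for $n$ large, $\rho_{d_n}\ge m/2$ on $[R_1,R_2]$.

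It then remains to upgrade this to $\theta_{d_n}\to\theta_d$ uniformly. Since $\rho_{d_n}\to\rho_d$ uniformly and is bounded away from $0$, \eqref{polari} shows that $(\cos_p\theta_{d_n},\sin_p\theta_{d_n})=((u_{d_n}-1)\rho_{d_n}^{-2/p},\,-v_{d_n}\rho_{d_n}^{-2/p'})$ converges uniformly to $(\cos_p\theta_d,\sin_p\theta_d)$; since $\theta_{d_n},\theta_d$ are continuous, nondecreasing (by \eqref{sys2} and $(f_\mathrm{eq})$), and coincide at $r=R_1$, a standard path-lifting argument for the covering $s\mapsto(\cos_ps,\sin_ps)$ of the $p$-circle — equivalently the local invertibility of the $p$-polar coordinates from \cite[Lemma 2.3]{ABF} — forces $\theta_{d_n}\to\theta_d$ uniformly: one partitions $[R_1,R_2]$ so finely that $\theta_d$ varies by less than $\pi_p$ on each piece, so that for $n$ large the curve $(\cos_p\theta_{d_n},\sin_p\theta_{d_n})$ stays on an arc where the parametrization is a homeomorphism, and then propagates the equality at $R_1$ piece by piece. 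This last step — controlling the (periodic, hence a priori multivalued) angle $\theta$ in the limit — is the only point needing any care; everything else follows directly from Lemma~\ref{lem:cont_dep} and the closed form for $\rho_d$.
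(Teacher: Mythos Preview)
Your proposal is correct and follows precisely the approach the paper indicates: the paper gives no explicit proof, stating only that the corollary is ``an immediate consequence of Lemma~\ref{lem:cont_dep}, \eqref{polari} and \cite[Lemma~2.3]{ABF}'', and you have simply spelled out how those three ingredients combine (the closed form $\rho_d^2=|u_d-1|^p+(p-1)|v_d|^{p'}$ is exactly the formula the paper later records as \eqref{eq:ellequadro}). Your care with the angle-lifting step is appropriate but perhaps more detail than strictly needed; an equally quick alternative is to invoke continuous dependence for the ODE system \eqref{sys2} directly, once $\rho_d$ is known to stay uniformly bounded away from zero.
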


In the rest of the section we recall some known results concerning the related radial eigenvalue problem 
\begin{equation}\label{eq:eigenvalue}
\begin{cases}
-(r^{N-1}\varphi_p(u'))'=\lambda r^{N-1} \varphi_p(u) \quad & \mbox{in } (R_1,R_2) \\
u'(R_1)=u'(R_2)=0,
\end{cases}
\end{equation}
and infer some information concerning $\theta_d(R_2)$ when $d \to 1$ by applying the Comparison Theorem to systems \eqref{ode2} and \eqref{eq:eigenvalue}.
\begin{theorem}{\cite[Theorem 1]{RW99}}\label{th:eigen}
The eigenvalue problem \eqref{eq:eigenvalue} has a countable number of simple eigenvalues $0=\lambda_1^{\mathrm{rad}}<\lambda_2^{\mathrm{rad}}<\lambda_3^{\mathrm{rad}}<\dots$, $\lim_{k\to+\infty}\lambda_k^{\mathrm{rad}}=+\infty$, and no other eigenvalues. The eigenfunction that corresponds to the $k$-th eigenvalue $\lambda_k^{\mathrm{rad}}$ has $k-1$ simple zeros in $(R_1,R_2)$.  
\end{theorem}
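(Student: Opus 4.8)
The plan is to prove Theorem~\ref{th:eigen} by the shooting/Pr\"ufer method, exploiting the homogeneity of \eqref{eq:eigenvalue}. For $\lambda\ge 0$ let $(u_\lambda,v_\lambda)$ be the unique solution on $[R_1,R_2]$ of the Cauchy problem for the planar system associated with \eqref{eq:eigenvalue} (i.e.\ \eqref{eq:shooting} with $\hat f$ replaced by $\lambda\varphi_p$) with initial datum $(u_\lambda(R_1),v_\lambda(R_1))=(1,0)$; uniqueness, global continuability and continuous dependence on $\lambda$ hold by the same arguments as in Lemma~\ref{lem:cont_dep}. First I would note that any eigenfunction $u$ of \eqref{eq:eigenvalue} has $u(R_1)\ne 0$ (otherwise $u(R_1)=u'(R_1)=0$ and uniqueness would give $u\equiv 0$), so $u/u(R_1)$ solves the same Cauchy problem and hence equals $u_\lambda$; therefore the eigenvalues are precisely the $\lambda\ge 0$ with $v_\lambda(R_2)=0$, they are all simple, and there are no others. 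Testing \eqref{eq:eigenvalue} with $u$ gives $\int_{R_1}^{R_2}r^{N-1}|u'|^p\,dr=\lambda\int_{R_1}^{R_2}r^{N-1}|u|^p\,dr$, so all eigenvalues are nonnegative and $\lambda=0$ corresponds to the constant eigenfunctions; thus $\lambda_1^{\mathrm{rad}}=0$.

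Next I would introduce the continuous angular function $\theta_\lambda$ of $(u_\lambda,v_\lambda)$ around the origin (well defined, since $(u_\lambda,v_\lambda)\ne(0,0)$ on $[R_1,R_2]$), normalized by $\theta_\lambda(R_1)=0$, via $u_\lambda=\rho_\lambda^{2/p}\cos_p\theta_\lambda$, $v_\lambda=-\rho_\lambda^{2/p'}\sin_p\theta_\lambda$. Because \eqref{eq:eigenvalue} is homogeneous, arguing as for \eqref{sys2}--\eqref{eq:polar_relation} (with $u-1$ replaced by $u$ and $\hat f(u)$ by $\lambda\varphi_p(u)$) the amplitude $\rho_\lambda$ cancels and $\theta_\lambda$ satisfies the scalar equation
\[
\theta_\lambda'=(p-1)\,r^{-\frac{N-1}{p-1}}\,|\sin_p\theta_\lambda|^{p'}+\lambda\,r^{N-1}\,|\cos_p\theta_\lambda|^{p},\qquad \theta_\lambda(R_1)=0.
\]
For $\lambda>0$ the right-hand side is positive wherever $r^{N-1}>0$ (the functions $\sin_p$ and $\cos_p$ have no common zero), so $\theta_\lambda$ is strictly increasing on $[R_1,R_2]$. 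Moreover, by the identities analogous to \eqref{eq:polar_relation}, $v_\lambda(r)=0\iff\theta_\lambda(r)\in\pi_p\Z$ and $u_\lambda(r)=0\iff\theta_\lambda(r)\in\tfrac{\pi_p}{2}+\pi_p\Z$, and at any zero of $u_\lambda$ one has $v_\lambda\ne0$, hence $u_\lambda'\ne0$ there: the zeros of $u_\lambda$ are simple.

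Then I would analyse the rotation number $\Theta(\lambda):=\theta_\lambda(R_2)$. The right-hand side of the angular equation is nondecreasing in $\lambda$, so $\Theta$ is nondecreasing by comparison; it is continuous by continuous dependence; and it is strictly increasing, since $\Theta(\lambda_1)=\Theta(\lambda_2)$ with $\lambda_1<\lambda_2$ would force $\theta_{\lambda_1}\equiv\theta_{\lambda_2}$ and hence $\cos_p\theta_{\lambda_1}\equiv0$ on $[R_1,R_2]$, which is impossible. Since $\Theta(0)=0$ and (see below) $\Theta(\lambda)\to+\infty$ as $\lambda\to+\infty$, the map $\Theta$ is a strictly increasing continuous bijection of $[0,+\infty)$ onto $[0,+\infty)$; hence for each integer $k\ge1$ there is a unique $\lambda_k^{\mathrm{rad}}$ with $\Theta(\lambda_k^{\mathrm{rad}})=(k-1)\pi_p$, these are all the eigenvalues, they are simple, $\lambda_1^{\mathrm{rad}}=0$, and $\lambda_k^{\mathrm{rad}}\to+\infty$. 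Finally, since $\theta_{\lambda_k^{\mathrm{rad}}}$ increases strictly from $0$ to $(k-1)\pi_p$, it meets $\tfrac{\pi_p}{2}+\pi_p\Z$ precisely at the $k-1$ points $\tfrac{\pi_p}{2},\tfrac{3\pi_p}{2},\dots,\bigl(k-\tfrac{3}{2}\bigr)\pi_p$, so the $k$-th eigenfunction has exactly $k-1$ simple zeros in $(R_1,R_2)$.

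The step I expect to be the main obstacle is the divergence $\Theta(\lambda)\to+\infty$ as $\lambda\to+\infty$: one cannot just integrate the bound $\theta_\lambda'\ge\lambda r^{N-1}|\cos_p\theta_\lambda|^p$, because $|\cos_p\theta_\lambda|$ may vanish. I would instead fix a subinterval $[a,b]\subset(R_1,R_2)$ on which $r^{N-1}$ is bounded away from $0$ (e.g.\ $[R_2/2,R_2]$ when $\Omega$ is a ball) and compare \eqref{eq:eigenvalue} there, by a $p$-Sturm argument, with a constant-coefficient equation $-(\varphi_p(w'))'=\mu\varphi_p(w)$ with $\mu$ comparable to $\lambda$, whose solutions are explicit $p$-trigonometric functions oscillating with period $\sim\mu^{-1/p}\to0$; the number of their zeros in $[a,b]$ then tends to $+\infty$, forcing the same for $u_\lambda$ and hence $\Theta(\lambda)\to+\infty$. (Alternatively one may invoke directly the $p$-Laplacian oscillation theory underlying \cite{RW99}.) The only remaining points, all routine, are the verification of the zero-counting identities and the mild care needed at $r=0$ in the ball case.
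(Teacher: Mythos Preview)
The paper does not prove this statement: Theorem~\ref{th:eigen} is quoted from \cite[Theorem~1]{RW99} and used as a black box, so there is no proof in the paper to compare your argument against.

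Your Pr\"ufer/shooting outline is the standard route to such results and is correct in substance. Two places deserve the care you already flag. First, in the ball case the angular equation is singular at $r=0$; the resolution is that, writing things back in $(u,v)$ variables, the offending term equals $(p-1)r^{N-1}|u_\lambda'|^p/\rho_\lambda^2$ (cf.\ \eqref{eq:polar_relation}), and since $v_\lambda(r)=O(r^N)$ one gets $|u_\lambda'|^p=O(r^{p'})$, so the angular equation extends continuously to $r=0$ with $\theta_\lambda'(0)=0$. Second, the strict monotonicity of $\Theta$ is slightly quicker than you wrote: the implication ``$\Theta(\lambda_1)=\Theta(\lambda_2)\Rightarrow\theta_{\lambda_1}\equiv\theta_{\lambda_2}$'' does not follow from the non-strict comparison inequality alone. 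A clean fix is to note that at the initial point $\theta_{\lambda_1}=\theta_{\lambda_2}=0$ and $\cos_p 0=1$, so $\theta_{\lambda_2}'-\theta_{\lambda_1}'=(\lambda_2-\lambda_1)r^{N-1}>0$ for $r$ slightly larger than $R_1$ (or, in the ball case, for $r>0$ small), giving $\theta_{\lambda_2}>\theta_{\lambda_1}$ on a right neighbourhood; then forward uniqueness for the scalar angular ODE (equivalently, uniqueness for the $(u,v)$ system) prevents a later crossing. Your plan for $\Theta(\lambda)\to+\infty$ via a $p$-Sturm comparison on a subinterval where $r^{N-1}$ is bounded away from zero is exactly the right way to handle that step.
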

Via the change of variables 
\begin{equation}\label{polar-eigenprobl}
\begin{cases}
u(r)=\varrho_\lambda(r)^{\frac{2}{p}}\cos_p(\vartheta_\lambda(r))\\
r^{N-1}\varphi_p(u'(r))=-\varrho_\lambda(r)^{\frac{2}{p'}}\sin_p(\vartheta_\lambda(r)),
\end{cases}
\end{equation}
we can rewrite the equation in \eqref{eq:eigenvalue} as 
$$
\begin{cases}
\displaystyle \varrho_\lambda'=\frac{p}{2\varrho}\left(1-\lambda r^{(N-1)p'}\right)\varphi_p(u)u',
\vspace{0.2cm}\\
\displaystyle \vartheta_\lambda'=\frac{r^{N-1}}{\varrho^2}\left[(p-1)|u'|^p+\lambda |u|^p\right].
\end{cases}
$$
Notice that the function $r \mapsto \vartheta_\lambda(r)$ is strictly increasing. As a consequence, if $\lambda=\lambda^{\mathrm{rad}}_{k+1}$ for $k\ge0$, the fact that the eigenfunction which corresponds to the $(k+1)$-th eigenvalue has $k$ simple zeros in $(R_1,R_2)$ reads as 
\begin{equation}\label{initialcondition-ep}
\vartheta_{\lambda^{\mathrm{rad}}_{k+1}}(R_2) - \vartheta_{\lambda^{\mathrm{rad}}_{k+1}}(R_1) =k\pi_p.
\end{equation}
Proceeding as in \eqref{eq:polar_relation}, we have that \eqref{polar-eigenprobl} implies
$$
r^{(N-1)p'}|u'|^p=\varrho_\lambda^2|\sin_p(\vartheta_\lambda)|^{p'}\quad\mbox{ and }\quad|u|^p=\varrho_\lambda^2|\cos_p(\vartheta_\lambda)|^p,
$$
so that
\begin{equation}\label{eq:theta'-omogenea-associata}
\vartheta_\lambda' =r^{N-1}\left[\frac{p-1}{r^{(N-1)p'}}|\sin_p(\vartheta_\lambda)|^{p'}+\lambda|\cos_p(\vartheta_\lambda)|^p\right].
\end{equation}

\begin{lemma}\label{lem:d_close1}
If, for some integer $k\geq1$,
\[
\liminf_{s\to1} \frac{f(s)}{\varphi_p(s-1)}>\lambda_{k+1}^{\textnormal{rad}}
\qquad \left(\text{respectively, } \; \limsup_{s\to1} \frac{f(s)}{\varphi_p(s-1)}< \lambda_{k+1}^{\textnormal{rad}} \right), 
\]
then there exists $\bar\delta>0$ such that $\theta_d(R_2) - \theta_d(R_1)>k\pi_p$ (respectively, $<$) for $d\in [1-\bar\delta,1+\bar\delta]$ and $d \neq 1$.
\end{lemma}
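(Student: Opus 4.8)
The plan is a Pr\"ufer-type comparison between the angular variable $\theta_d$ of the shooting problem and the angular variable $\vartheta_\lambda$ of the eigenvalue problem \eqref{eq:eigenvalue}, for a $\lambda$ slightly larger (respectively, smaller) than $\lambda_{k+1}^{\mathrm{rad}}$. I discuss the case $\liminf_{s\to1} f(s)/\varphi_p(s-1) > \lambda_{k+1}^{\mathrm{rad}}$; the other one follows by reversing all the inequalities. Fix $\lambda^*$ with $\lambda_{k+1}^{\mathrm{rad}} < \lambda^* < \liminf_{s\to1} f(s)/\varphi_p(s-1)$. By definition of $\liminf$ there is $\varepsilon\in(0,1)$ such that $(s-1)f(s)\geq\lambda^*|s-1|^p$ for all $s$ with $0<|s-1|\leq\varepsilon$, and hence trivially also for $s=1$. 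Since $u_1\equiv1$, the continuous dependence of Lemma~\ref{lem:cont_dep} provides $\bar\delta>0$ such that $\|u_d-1\|_{L^\infty(R_1,R_2)}\leq\varepsilon$ whenever $|d-1|\leq\bar\delta$; for such $d$ one has $u_d>0$, so that $\hat f(u_d)=f(u_d)$, and therefore
$$(u_d(r)-1)\,\hat f(u_d(r))\;\geq\;\lambda^*\,|u_d(r)-1|^p\qquad\text{for all }r\in[R_1,R_2].$$

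Inserting this bound into the second equation of \eqref{sys2} and using the polar identities \eqref{eq:polar_relation}, one finds that $\theta_d$ satisfies the differential inequality $\theta_d'(r)\geq G_{\lambda^*}(r,\theta_d(r))$, where
$$G_\lambda(r,\theta):=r^{N-1}\left[\frac{p-1}{r^{(N-1)p'}}\,|\sin_p(\theta)|^{p'}+\lambda\,|\cos_p(\theta)|^p\right]$$
is precisely the right-hand side of \eqref{eq:theta'-omogenea-associata}. Since $|\sin_p|^{p'}$ and $|\cos_p|^p$ are $\pi_p$-periodic, $G_\lambda(r,\cdot)$ is $\pi_p$-periodic; so, letting $\vartheta_{\lambda^*}$ be the solution of \eqref{eq:theta'-omogenea-associata} with $\lambda=\lambda^*$ and $\vartheta_{\lambda^*}(R_1)=0$, the function $\vartheta_{\lambda^*}+\theta_d(R_1)$ also solves that equation (recall $\theta_d(R_1)\in\{0,\pi_p\}$ by \eqref{ci2}) and has the same value as $\theta_d$ at $R_1$. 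The comparison theorem for scalar first-order ODEs then gives $\theta_d(r)\geq\vartheta_{\lambda^*}(r)+\theta_d(R_1)$ on $[R_1,R_2]$, whence
$$\theta_d(R_2)-\theta_d(R_1)\;\geq\;\vartheta_{\lambda^*}(R_2).$$

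It remains to prove $\vartheta_{\lambda^*}(R_2)>k\pi_p$. The map $\lambda\mapsto\vartheta_\lambda(R_2)$ (with $\vartheta_\lambda(R_1)=0$) is non-decreasing, again by comparison, since $G_\lambda(r,\theta)$ is non-decreasing in $\lambda$; and $\vartheta_{\lambda_{k+1}^{\mathrm{rad}}}(R_2)=k\pi_p$ by \eqref{initialcondition-ep}. Hence $\vartheta_{\lambda^*}(R_2)\geq k\pi_p$. If equality held, then $\sin_p(\vartheta_{\lambda^*}(R_2))=0$, so the corresponding solution of \eqref{eq:eigenvalue} would satisfy the Neumann conditions and, counting the passages of $\vartheta_{\lambda^*}$ through the zeros of $\cos_p$, would have exactly $k$ zeros in $(R_1,R_2)$; by Theorem~\ref{th:eigen} this forces $\lambda^*=\lambda_{k+1}^{\mathrm{rad}}$, contradicting $\lambda^*>\lambda_{k+1}^{\mathrm{rad}}$. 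Thus $\vartheta_{\lambda^*}(R_2)>k\pi_p$, and $\theta_d(R_2)-\theta_d(R_1)>k\pi_p$ for all $d\in[1-\bar\delta,1+\bar\delta]$ with $d\neq1$. For the $\limsup$ statement one picks instead $\lambda^{**}$ with $\limsup_{s\to1} f(s)/\varphi_p(s-1)<\lambda^{**}<\lambda_{k+1}^{\mathrm{rad}}$ (possible, since the $\limsup$ is nonnegative by $(f_{\mathrm{eq}})$), obtains the reversed inequality $\theta_d'\leq G_{\lambda^{**}}(r,\theta_d)$, reverses the comparison, and deduces $\vartheta_{\lambda^{**}}(R_2)<k\pi_p$ exactly as above. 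The one genuinely delicate point is the validity of the comparison theorem near $r=R_1=0$ in the ball case, where $G_\lambda$ is singular; this is handled as in \cite{ABF}, using that near $r=0$ both $\theta_d$ and $\vartheta_{\lambda^*}$ stay close to a zero of $\sin_p$, so that $G_{\lambda^*}(r,\theta_d(r))$ remains integrable.
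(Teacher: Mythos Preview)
Your proof is correct and follows essentially the same Pr\"ufer-type comparison with the eigenvalue problem as the paper's own argument. The only minor presentational difference is that you introduce an intermediate $\lambda^*>\lambda_{k+1}^{\mathrm{rad}}$, obtain the non-strict inequality $\theta_d'\ge G_{\lambda^*}(r,\theta_d)$, and then argue separately that $\vartheta_{\lambda^*}(R_2)>k\pi_p$ via Theorem~\ref{th:eigen}, whereas the paper compares directly with $\lambda=\lambda_{k+1}^{\mathrm{rad}}$ and relies on the strict differential inequality $\theta_d'>G_{\lambda_{k+1}^{\mathrm{rad}}}(r,\theta_d)$ to obtain the strict conclusion at once.
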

\begin{proof}
Suppose that $\liminf_{s\to1} \frac{f(s)}{\varphi_p(s-1)}>\lambda_{k+1}^{\mathrm{rad}}$.
There exists $\delta>0$ such that for every $s$ satisfying $|s-1|<\delta$ it holds
$$
\hat f(s)(s-1)=f(s)(s-1)>
\lambda_{k+1}^{\mathrm{rad}} |s-1|^p.
$$
Then, by \eqref{sys2}, we get that if $|u(r)-1|<\delta$,
$$
\theta'_d(r)>
\frac{r^{N-1}}{\rho(r)^2}\left[(p-1)|u'(r)|^p+\lambda_{k+1}^{\mathrm{rad}}|u(r)-1|^p\right].
$$
Combining the latter inequality with \eqref{eq:polar_relation} and \eqref{2.14}, we obtain  that there exists $\bar\delta>0$ such that for all $d\in (1-\bar\delta,1+\bar\delta)$ with $d \neq 1$
$$
\theta_d'(r)> r^{N-1}\left[\frac{p-1}{r^{(N-1)p'}}|\sin_p(\theta_d(r))|^{p'}+\lambda_{k+1}^{\mathrm{rad}} |\cos_p(\theta_d(r))|^p\right]
$$
for all $r\in (R_1,R_2]$.
Recalling \eqref{eq:theta'-omogenea-associata} with $\lambda=\lambda^{\mathrm{rad}}_{k+1}$ and using the Comparison Theorem for ODEs we obtain, for all $d\in (1-\bar\delta,1+\bar\delta)$ with $d \neq 1$,

$$
\theta_d(r) - \theta_d(R_1)> \vartheta_{\lambda^{\mathrm{rad}}_{k+1}}(r)- \vartheta_{\lambda^{\mathrm{rad}}_{k+1}}(R_1)\quad\mbox{for all }r\in(R_1,R_2].
$$
In particular, by \eqref{initialcondition-ep}, $\theta_d(R_2) - \theta_d(R_1)> k\pi_p$. The remaining case can be treated in the same way.
\end{proof}

\section{An elastic-type property}\label{Sec:elastic}
In what follows we suppose that $f$ satisfies $(f_{\mathrm{reg}})$, $(f_{\mathrm{eq}})$, $(f_0)$, and that, in the case $\Omega=\mathcal B(R_2)$, it fulfills in addition either $(f_\mathrm{subl})$ or $(f_\mathrm{subc})$.

The main aim of this section is to prove that, under these assumptions, the solution $(u_d,v_d)$ of \eqref{eq:shooting} enjoys the following property
\begin{equation}\label{eq:explosion_d}
\lim_{d\to+\infty} \left( u_d(r)+|u_d'(r)| \right)=+\infty \quad \text{uniformly for } r\in [R_1,r_1(d)],
\end{equation}
where, for $d > 0$, 
\begin{equation}\label{eq:r1def}
r_1(d):=\max\left\{r\in [R_1,R_2]:\ u_d(s)> 0\ \text{ for all } s\in [R_1,r) \right\}.
\end{equation}
Notice that $r_1(d)$ is the first zero of $u_d(r)$ (and, actually, the unique one, since $\hat{f}(s) = 0$ for $s \leq 0$) if any; otherwise, $r_1(d) = R_2$. Following the literature, we call \eqref{eq:explosion_d} an elastic-type property because it says that, whenever $u_d(R_1)$ is large, it follows that the norm of $(u_d(r),u'_d(r))$ is also large, uniformly in $r$, at least as long as $u_d(r) \geq 0$. For the sake of clarity, we also remark that \eqref{eq:explosion_d} explicitly means that
$$
\lim_{d \to +\infty}\min_{r \in [R_1,r_1(d)]}\left( u_d(r)+|u_d'(r)| \right)=+\infty.
$$ 
We will prove this separately in Propositions \ref{prop:explosion1}, \ref{prop:explosion2} and \ref{prop:explosion3}, depending on the hypotheses on $f$ and $\Omega$.

As a crucial tool for most of our next arguments, for any $d>0$ we introduce the energy 
$$
H_d(r):=\frac{|u_d'(r)|^p}{p'}+\hat F(u_d(r)), \qquad r\in [R_1,R_2],
$$
where for every $s\in\R$
\[
\hat F(s):=\int_1^s \hat f(\sigma)\,d\sigma.
\]
In view of $(f_{\mathrm{eq}})$ and of the definition of $\hat f$, it holds that $\hat F(s)\geq0$ for every $s\in\R$ and $\hat F(s)=0$ if and only if $s=1$. Moreover, $\hat F(s)$ is monotone increasing for $s\geq1$, so that
\begin{equation}\label{eq:Finfty_def}
F_\infty:=\lim_{s\to+\infty} F(s)=\lim_{s\to+\infty}\hat F(s)
\end{equation}
is well defined.

We deduce that $H_d(r)\geq0$ for every $d>0$ and $r\in [R_1,R_2]$, and that $H_d(r)=0$ if and only if $u_d(r)=1$ and $u_d'(r)=0$.
Observing that, for $r \neq 0$,
\begin{equation}\label{eq:useful_rel}
\vert u_d'(r) \vert^p = \vert \varphi_p(u_d'(r)) \vert^{p'}, \qquad 
\left(\varphi_p(u_d'(r))\right)' = - \frac{N-1}{r} \varphi_p(u_d'(r)) - \hat f(u_d(r)),
\end{equation}
a straightforward computation yields
\begin{equation}\label{eq:H'}
H'_d(r) = - \frac{N-1}{r}\vert u_d'(r) \vert^p\le0 \qquad \mbox{ for } r \neq 0.
\end{equation}
As a consequence, 
\begin{equation}\label{eq:H}
H_d(r_1(d)) \leq H_d(r) \leq H_d(R_1) \qquad \mbox{ for } r \in [R_1,r_1(d)].
\end{equation}

\begin{remark}\label{equivalence}
It is easy to see that the elastic property \eqref{eq:explosion_d} holds true if   
\begin{equation}\label{eq:explosion_H}
\lim_{d\to+\infty} H_d(r_1(d))=+\infty.
\end{equation}
Indeed, suppose by contradiction that \eqref{eq:explosion_H} holds and that there exist a constant $M>0$ and sequences $d_n\to+\infty$ and $r_n\in [R_1,r_1(d_n)]$ such that 
$$
u_{d_n}(r_n)+|u'_{d_n}(r_n)|\le M\quad\mbox{for all }n.
$$  
Then, recalling \eqref{eq:H} and the fact that $\hat F$ is decreasing in $[0,1)$ and increasing in $[1,+\infty)$, we find
$$
H_{d_n}(r_1(d_n)) \leq H_{d_n}(r_n)\leq M^p/p'+\max\{\hat F(0),\hat F(M)\} \quad\mbox{for all } n,
$$
which contradicts \eqref{eq:explosion_H}. \hfill $\diamond$
\end{remark}

If either $\Omega$ is an annulus or $f$ is integrable at $+\infty$, we can easily prove the elastic property \eqref{eq:explosion_d} as a consequence of relation \eqref{eq:H'}.
 
\begin{proposition}\label{prop:explosion1}
Suppose that one of the following two assumptions holds
\begin{itemize}
\item[(i)] the quantity $F_\infty$ defined in \eqref{eq:Finfty_def} is finite;
\item[(ii)] $\Omega=\mathcal A(R_1,R_2)$, with $R_1>0$.
\end{itemize}
Then \eqref{eq:explosion_d} holds.
\end{proposition}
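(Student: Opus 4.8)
The plan is to argue by cases according to whether the limit $F_\infty$ introduced in \eqref{eq:Finfty_def} is finite. Observe first that if $F_\infty=+\infty$, then assumption~(i) cannot hold, so in that case assumption~(ii) is necessarily in force, i.e. $\Omega=\mathcal A(R_1,R_2)$ with $R_1>0$.

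\emph{Case 1: $F_\infty<+\infty$} (which always holds under assumption~(i), and may also hold under~(ii)). The starting point is the monotonicity of the energy. By \eqref{eq:H} together with $\hat F\ge 0$ we have, for every $d>0$ and every $r\in[R_1,r_1(d)]$,
\[
\frac{|u_d'(r)|^p}{p'}\le H_d(r)\le H_d(R_1)=\frac{|u_d'(R_1)|^p}{p'}+\hat F(d)=F(d)\le F_\infty,
\]
where I use $u_d'(R_1)=0$ (coming from the initial condition $v_d(R_1)=0$ when $R_1>0$, and from the $C^1$-regularity of radial solutions, i.e. $v_d(r)=O(r^N)$ as $r\to0^+$, when $R_1=0$), as well as the fact that $\hat F$ is non-decreasing on $[1,+\infty)$ with limit $F_\infty$. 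Hence $|u_d'(r)|\le C:=(p'F_\infty)^{1/p}$, a bound uniform in both $d$ and $r$. Integrating the first equation of \eqref{eq:shooting} from $R_1$, I then obtain $u_d(r)\ge d-C(r-R_1)\ge d-C(R_2-R_1)$ for all $r\in[R_1,r_1(d)]$, and therefore $u_d(r)+|u_d'(r)|\ge d-C(R_2-R_1)\to+\infty$ uniformly in $r$, which is precisely \eqref{eq:explosion_d}.

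\emph{Case 2: $F_\infty=+\infty$}, so that $\Omega=\mathcal A(R_1,R_2)$ with $R_1>0$. Here it is more convenient to prove the stronger statement \eqref{eq:explosion_H} and then conclude via Remark~\ref{equivalence}. Since $r\ge R_1>0$ throughout $[R_1,R_2]$ and $\frac{|u_d'(r)|^p}{p'}\le H_d(r)$, the energy identity \eqref{eq:H'} yields the linear differential inequality
\[
H_d'(r)=-\frac{N-1}{r}\,|u_d'(r)|^p\ge -\frac{(N-1)p'}{R_1}\,H_d(r),\qquad r\in[R_1,R_2].
\]
Gronwall's lemma then gives
\[
H_d(r_1(d))\ge H_d(R_1)\,e^{-\frac{(N-1)p'}{R_1}(R_2-R_1)}=F(d)\,e^{-\frac{(N-1)p'}{R_1}(R_2-R_1)},
\]
and since $F(d)\to F_\infty=+\infty$ as $d\to+\infty$, we conclude $H_d(r_1(d))\to+\infty$, i.e. \eqref{eq:explosion_H}, whence \eqref{eq:explosion_d} by Remark~\ref{equivalence}.

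I do not expect a genuine obstacle: the whole argument is elementary once the energy monotonicity \eqref{eq:H'}--\eqref{eq:H} is at hand. The only points demanding a little care are the evaluation $H_d(R_1)=F(d)$ at the centre of the ball (the degenerate case $R_1=0$), and the bookkeeping of the alternative ``(i) or (ii)'' — namely the observation that $F_\infty=+\infty$ forces the annular geometry with $R_1>0$, which is exactly what keeps the Gronwall constant in Case~2 finite.
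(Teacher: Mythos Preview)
Your proof is correct and follows essentially the same route as the paper's: in the case $F_\infty<+\infty$ you bound $|u_d'|$ via the energy and integrate to show $u_d$ itself diverges, while in the case $F_\infty=+\infty$ (forcing $R_1>0$) you apply Gronwall to the differential inequality $H_d'\ge -\tfrac{(N-1)p'}{R_1}H_d$ and invoke Remark~\ref{equivalence}. The only cosmetic difference is that the paper states the derivative bound on all of $[R_1,R_2]$ with the constant $\max\{\hat F(0),F_\infty\}$ (valid for every $d>0$), whereas you use $F_\infty$ (valid for $d\ge1$, which suffices for the limit).
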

\begin{proof}
(i) Relation \eqref{eq:H'}, together with the fact that $\hat F$ is decreasing in $[0,1)$ and increasing in $[1,+\infty)$ provides
\[
\frac{|u_d'(r)|^p}{p'} \leq H_d(r)\leq H_d(R_1)=\hat F(d) \leq \max\{\hat F(0),F_\infty\},
\]
for every $d>0$ and $r\in [R_1,R_2]$ (with $R_1=0$ in the case $\Omega=\mathcal{B}({R_2})$). By integrating the previous inequality, we obtain
\[
|u_d(r)-d|\leq (p'\max\{\hat F(0),F_\infty\})^{1/p}(R_2-R_1), 
\]
which, being $F_\infty$ finite, immediately provides that $\lim_{d\to+\infty}u_d(r)=+\infty$ uniformly for $r\in [R_1,R_2]$ and hence \eqref{eq:explosion_d}.

(ii) Suppose that $F_\infty=+\infty$ and that $R_1>0$. Relation \eqref{eq:H'} provides
\[
|H_d'(r)|\leq \frac{N-1}{R_1}|u'_d(r)|^p \leq \frac{(N-1)p'}{R_1}H_d(r),
\]
for every $d>0$ and $r\in [R_1,R_2]$, hence, by Gronwall's lemma,
\[
H_d(r)\geq H_d(R_1)e^{\frac{(N-1)p'}{R_1}(R_1-r)} \geq \hat F(d) e^{\frac{(N-1)p'}{R_1}(R_1-R_2)},
\]
for every $d>0$ and $r\in [R_1,R_2]$.
The assumption $F_\infty=\lim_{d\to+\infty}\hat F(d)=+\infty$ allows to conclude that \eqref{eq:explosion_H} holds and, by Remark \ref{equivalence}, \eqref{eq:explosion_d} holds as well.
\end{proof}

As already mentioned in the Introduction, in order to treat the remaining case $\Omega=\mathcal{B}(R_2)$ and $F_\infty=+\infty$, we take inspiration essentially from \cite{GHMZ}.
From now on in this section, $R_1=0$ and we assume either $(f_{\mathrm{subl}})$ or $(f_{\mathrm{subc}})$. We let 
\begin{equation*}
\eta:=\begin{cases}
\mbox{any number in }(0,1)\mbox{ if }(f_{\mathrm{subl}})\mbox{ holds,} \\
\mbox{the constant introduced in }(f_{\mathrm{subc}})\mbox{ if the latter holds.}
\end{cases}
\end{equation*}
For every $d>0$, we introduce the quantity,
\begin{equation}\label{eq:r0def}
r_0(d):=\max\left\{r\in [0,R_2]:\ u_d(s)>\eta d \ \text{ for all } s\in [0,r) \right\}.
\end{equation}
We notice that 
\begin{equation}\label{eq:r0property}
0<r_0(d)<r_1(d),
\end{equation}
the former inequality descending from $u_d(0)=d>\eta d$ for every $d>0$, and the latter simply because $\eta d>0$.
The following estimate from below of $r_0(d)$ will be crucial in the sequel.

\begin{lemma}\label{lem:r0estimate}
If $d>1/\eta$ then, for every $r\in (0,r_0(d)]$, it holds
\begin{itemize}
\item[(i)] $u_d'(r)<0$;
\item[(ii)] $1<\eta d\leq u_d(r)\leq d $;
\item[(iii)] $0<\hat f(u_d(r))=f(u_d(r)) \leq f^*(d)$.
\end{itemize}
In addition, we have the following estimate from below of $r_0(d)$
\begin{equation}\label{eq:r0estimate}
r_0(d)\geq \min\left\{ R_2 \,; \,
[(1-\eta)dp']^{\frac{1}{p'}}\left(\frac{N}{f^*(d)}\right)^{\frac{1}{p}} \right\}.
\end{equation}
\end{lemma}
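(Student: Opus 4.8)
\textbf{Proof plan for Lemma \ref{lem:r0estimate}.}

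The plan is to bootstrap from the definition of $r_0(d)$: on the interval $[0,r_0(d))$ we have $u_d>\eta d$ by construction, and since $d>1/\eta$ gives $\eta d>1$, the hypothesis $(f_\mathrm{eq})$ guarantees $\hat f(u_d)=f(u_d)>0$ there. First I would establish (i): from the second equation in \eqref{eq:shooting}, $v_d'=-r^{N-1}\hat f(u_d)<0$ on $(0,r_0(d))$, and since $v_d(0)=0$ this forces $v_d(r)<0$, whence $u_d'(r)=\varphi_p^{-1}(v_d(r)/r^{N-1})<0$ for $r\in(0,r_0(d)]$ (the endpoint $r_0(d)$ is included by continuity, noting $v_d(r_0(d))<0$ strictly). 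Point (ii) is then immediate: $u_d$ is strictly decreasing on $[0,r_0(d)]$, so $u_d(r)\le u_d(0)=d$, while the lower bound $u_d(r)\ge\eta d$ holds on the closed interval by continuity from the definition of $r_0(d)$, and $\eta d>1$ was already noted. Point (iii) follows from (ii) together with $(f_\mathrm{eq})$ (positivity of $f$ above $1$) and the monotonicity of $f^*$: since $1\le u_d(r)\le d$, we get $0<f(u_d(r))\le f^*(u_d(r))\le f^*(d)$, using $f^*\ge f$ on $[1,\infty)$ and that $f^*$ is non-decreasing.

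For the quantitative estimate \eqref{eq:r0estimate}, the idea is to integrate the equation twice and use the uniform bound on $\hat f$ from (iii). Assume $r_0(d)<R_2$ (otherwise the bound is trivial). Integrating $v_d'=-r^{N-1}\hat f(u_d)$ from $0$ to $r$ and using $0<\hat f(u_d)\le f^*(d)$ on $(0,r_0(d))$ gives $-v_d(r)=\int_0^r s^{N-1}\hat f(u_d(s))\,ds\le f^*(d)\,r^N/N$, hence $|u_d'(r)|=\varphi_p^{-1}(|v_d(r)|/r^{N-1})\le\bigl(f^*(d)r/N\bigr)^{1/(p-1)}=\bigl(f^*(d)r/N\bigr)^{p'-1}$. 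Integrating $|u_d'|$ over $[0,r_0(d)]$ and recalling $u_d(0)-u_d(r_0(d))=d-\eta d=(1-\eta)d$ by the definition of $r_0(d)$ (at $r=r_0(d)$, $u_d$ attains the value $\eta d$ since it is continuous, strictly decreasing, and $r_0(d)<R_2$ means it genuinely reaches the level $\eta d$), we obtain
\[
(1-\eta)d=\int_0^{r_0(d)}|u_d'(s)|\,ds\le\Bigl(\frac{f^*(d)}{N}\Bigr)^{p'-1}\frac{r_0(d)^{p'}}{p'}.
\]
Solving for $r_0(d)$ yields $r_0(d)\ge\bigl[(1-\eta)dp'\bigr]^{1/p'}\bigl(N/f^*(d)\bigr)^{(p'-1)/p'}$, and since $(p'-1)/p'=1/p$ this is exactly \eqref{eq:r0estimate} in the case $r_0(d)<R_2$; combined with the trivial case, we get the claimed $\min$.

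I do not expect any serious obstacle here: the lemma is essentially a careful unwinding of definitions plus a one-line ODE integration, with the subcriticality machinery ($f^*$, $\eta$) only entering through the already-established uniform bound in (iii). The one point requiring a little care is justifying that $u_d$ actually attains the value $\eta d$ at $r_0(d)$ when $r_0(d)<R_2$ — this is where strict monotonicity from (i) is used, ruling out the degenerate scenario where $u_d$ would stay above $\eta d$ past $r_0(d)$; the definition of $r_0(d)$ as a maximum, together with continuity, closes this.
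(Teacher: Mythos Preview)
Your proposal is correct and follows essentially the same route as the paper's proof: first obtain $v_d<0$ and hence $u_d'<0$ on $(0,r_0(d)]$ via the sign of $\hat f$, deduce (ii) and (iii) from the resulting monotonicity, then integrate the equation twice using the bound $\hat f(u_d)\le f^*(d)$ to turn the drop $u_d(0)-u_d(r_0(d))=(1-\eta)d$ into the lower bound on $r_0(d)$. The only cosmetic difference is that the paper phrases the integration in terms of $\varphi_p(u_d')$ rather than $v_d$, but the computations are identical.
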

\begin{proof}
In order to prove (i), notice that the assumption $d>1/\eta$ implies $u_d(r)\ge \eta d>1$ for every $r\in (0,r_0(d)]$ and hence, by $(f_\textrm{eq})$, $\hat f(u_d(r))>0$ in the same interval. The second equation in \eqref{eq:shooting} then implies $v'_d(r)<0$ in this interval and, by integration, $v_d(r)<0$. Then the first equation in \eqref{eq:shooting} provides (i).

Properties (ii) and (iii) follow immediately from (i).

Now we prove \eqref{eq:r0estimate}. If $u_d(r)>\eta d$ for every $r\in [0,R_2)$, then $r_0(d)= R_2$ and we are done. Otherwise, 
\begin{equation}\label{eq:r0}
r_0(d) \in (0,R_2) \qquad\text{and}\qquad u_d(r_0(d))=\eta d.
\end{equation}
By integrating \eqref{ode2} and inserting (iii) we obtain, for $r\in [0,r_0(d))$,
\[
\varphi_p(u_d'(r))=-\frac{1}{r^{N-1}} \int_0^r t^{N-1} \hat f(u_d(t)) \,dt
\geq -\frac{1}{r^{N-1}} \int_0^r t^{N-1} f^*(d) \,dt =-\frac{r}{N} f^*(d).
\]
Being $\varphi_p$ invertible and $\varphi_p^{-1}$ monotone increasing, this provides
\[
u_d'(r)\geq \varphi_p^{-1}\left(-\frac{r}{N} f^*(d) \right),
\quad \text{for every }r\in [0,r_0(d)).
\]
We integrate again the previous inequality in $(0,r_0(d))$ and use \eqref{eq:r0} to get
\[
(\eta-1)d \geq \int_0^{r_0(d)} \varphi_p^{-1} \left(-\frac{r}{N} f^*(d)\right) \,dr.
\]
Noticing that $\varphi_p^{-1}=\varphi_{p'}$, we deduce
\[
(1-\eta)d\leq \frac{r_0(d)^{p'}}{p'} \left(\frac{f^*(d)}{N}\right)^{p'-1},
\]
which provides \eqref{eq:r0estimate}.
\end{proof}

Using Lemma \ref{lem:r0estimate}, the elastic-type property \eqref{eq:explosion_d} can be quite easily established when 
$f$ satisfies $(f_\mathrm{subl})$. Precisely, we have the following proposition.

\begin{proposition}\label{prop:explosion2}
Suppose that $\Omega=\mathcal B(R_2)$ and $f$ satisfies $(f_{\mathrm{subl}})$. 
Then \eqref{eq:explosion_d} holds.
\end{proposition}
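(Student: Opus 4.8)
The plan is to prove the sufficient condition \eqref{eq:explosion_H}, i.e.\ $H_d(r_1(d))\to+\infty$ as $d\to+\infty$; by Remark~\ref{equivalence} this yields \eqref{eq:explosion_d}. Since the case $F_\infty<+\infty$ is already covered by Proposition~\ref{prop:explosion1}(i), I assume throughout that $F_\infty=+\infty$.

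First I would turn $(f_\mathrm{subl})$ into a pointwise growth bound on $f^*$: there exist $M':=M+1$ and $s_1>1$ with $f(s)\le M's^{p-1}$ for $s\ge s_1$, and since $f$ is bounded on the compact interval $[1,s_1]$, splitting the maximum that defines $f^*$ gives a threshold $d_1>1/\eta$ such that $0<f^*(d)\le M'd^{p-1}$ for every $d\ge d_1$. The crucial step is then to insert this into the lower bound \eqref{eq:r0estimate} of Lemma~\ref{lem:r0estimate}: for $d\ge d_1$,
\[
r_0(d)\ \ge\ \min\Bigl\{R_2,\ \bigl[(1-\eta)dp'\bigr]^{1/p'}\Bigl(\tfrac{N}{f^*(d)}\Bigr)^{1/p}\Bigr\}\ \ge\ \min\Bigl\{R_2,\ \bigl[(1-\eta)p'\bigr]^{1/p'}\Bigl(\tfrac{N}{M'}\Bigr)^{1/p}\Bigr\}=:\bar r>0,
\]
where the power of $d$ cancels precisely because $\tfrac{1}{p'}=\tfrac{p-1}{p}$. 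Hence $r_0(d)\ge\bar r$ for all large $d$, with $\bar r>0$ independent of $d$.

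It then remains to run the energy/Gronwall argument of Proposition~\ref{prop:explosion1}(ii), now on $[\varepsilon,R_2]$ with $\varepsilon:=\bar r/2$. For $d$ large we have $0<\varepsilon<\bar r\le r_0(d)<r_1(d)$, so Lemma~\ref{lem:r0estimate}(ii) gives $u_d(\varepsilon)\ge\eta d>1$, whence, $\hat F$ being increasing on $[1,+\infty)$,
\[
H_d(\varepsilon)\ \ge\ \hat F\bigl(u_d(\varepsilon)\bigr)\ \ge\ \hat F(\eta d)\ \longrightarrow\ F_\infty=+\infty\qquad(d\to+\infty).
\]
On $[\varepsilon,R_2]$ one has $\tfrac1r\le\tfrac1\varepsilon$, so \eqref{eq:H'} together with $|u_d'|^p\le p'H_d$ gives $|H_d'(r)|\le\tfrac{(N-1)p'}{\varepsilon}H_d(r)$, and Gronwall's lemma yields $H_d(r)\ge H_d(\varepsilon)\,e^{-\frac{(N-1)p'}{\varepsilon}(R_2-\varepsilon)}$ for all $r\in[\varepsilon,R_2]$. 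Taking $r=r_1(d)\in[\varepsilon,R_2]$ gives \eqref{eq:explosion_H}, and the proof is complete.

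The step I expect to be the genuine obstacle is the $d$-uniform lower bound $r_0(d)\ge\bar r$: it quantifies the fact that a solution starting very high at the origin needs a definite amount of radial ``time'' to descend to the level $\eta d$, and — as already visible in \eqref{eq:r0estimate} — this is exactly where sublinearity of $f$ is decisive, since it is the bound $f^*(d)\lesssim d^{p-1}$ that makes the $d$-dependence disappear. Once this is in hand, the conclusion is the same elementary monotone-energy analysis already used for the annulus.
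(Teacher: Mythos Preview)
Your proof is correct and follows essentially the same strategy as the paper: both reduce to showing $H_d(r_1(d))\to+\infty$ via the uniform lower bound $r_0(d)\ge\bar r>0$ obtained by inserting the sublinear estimate $f^*(d)\le M'd^{p-1}$ into \eqref{eq:r0estimate}. The only cosmetic difference is in how the energy is propagated past $r_0(d)$: the paper multiplies \eqref{eq:H'} by the integrating factor $r^{p'(N-1)}$ to get $\bigl(r^{p'(N-1)}H_d\bigr)'\ge0$ and integrates from $r_0(d)$ to $r_1(d)$, whereas you bound $1/r\le 2/\bar r$ and rerun the Gronwall argument of Proposition~\ref{prop:explosion1}(ii) on $[\bar r/2,R_2]$.
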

\begin{proof}
We can suppose that $F_\infty=+\infty$, since the complementary case was treated in Proposition \ref{prop:explosion1}-(i).
Using \eqref{eq:H'} we easily obtain
$$
H_d'(r) + \frac{(N-1)p'}{r}H_d(r) \geq 0, \quad \mbox{ for every } r \in (0,R_2].
$$
Multiplying the above inequality by $r^{p'(N-1)}$ we infer that 
$$
\frac{d}{dr} \left( r^{p'(N-1)} H_d(r) \right) \geq 0, \quad \mbox{ for every } r \in (0,R_2],
$$
so that integrating from $r_0(d)$ to $r_1(d)$ (recall \eqref{eq:r0property}) and using that $r_1(d) \leq R_2$ yields
$$
H_d(r_1(d)) \geq R_2^{-p'(N-1)} r_0(d)^{p'(N-1)}H_d(r_0(d)).
$$ 
Using \eqref{eq:r0estimate} and the fact that $H_d(r_0(d)) \geq F(\eta d)$ (which follows from Lemma \ref{lem:r0estimate} (ii) and from the fact that $\hat F(s)$ is increasing for $s\geq1$), we obtain 
$$
H_d(r_1(d)) \geq R_2^{-p'(N-1)} 
\min\left\{ R_2^{p'(N-1)} \,; \,[(1-\eta)dp']^{N-1}\left(\frac{N}{f^*(d)}\right)^{\frac{N-1}{p-1}} \right\} F(\eta d).
$$
From assumption $(f_\mathrm{subl})$, we get $f(d) \leq (M+1) d^{p-1}$ for $d$ large enough; therefore, 
$f^*(d) \leq (M+1) d^{p-1}$ as well. Hence, for $d$ large,
$$
H_d(r_1(d)) \geq R_2^{-p'(N-1)} 
\min\left\{ R_2^{p'(N-1)} \,; \,[(1-\eta)p']^{N-1}\left(\frac{N}{M+1}\right)^{\frac{N-1}{p-1}} \right\} F(\eta d) .
$$
Since $F_\infty = +\infty$, we obtain $H_d(r_1(d)) \to +\infty$ for $d \to +\infty$. By Remark \ref{equivalence} this provides 
\eqref{eq:explosion_d}.
\end{proof}

The case when $f$ satisfies $(f_\mathrm{subc})$ is more delicate and some further work is needed. Below, we state and prove two useful lemmas.

\begin{lemma}\label{lem:Pohozaev}
Every solution of the equation $-\left(r^{N-1}\varphi_p(u')\right)'=r^{N-1}\hat f(u)$ satisfies the following Pohozaev-type identity
\begin{multline}\label{eq:Pohozaev}
\left[r^N\left(\frac{|u'|^p}{p'}+\hat F(u)\right)+a r^{N-1}u \varphi_p(u') \right]' \\
=r^{N-1}\left[ \left(1-\frac{N}{p}+a\right)|u'|^p+N\hat F(u)-a \hat f(u)u \right],
\end{multline}
for every $a\in\R$.
\end{lemma}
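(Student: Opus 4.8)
The plan is to derive the Pohozaev-type identity \eqref{eq:Pohozaev} by a direct computation: expand the left-hand side using the product rule, and then substitute the differential equation to eliminate the second-order term. First I would observe that the identity is, term by term, the radial version of the classical Pohozaev/Pucci--Serrin identity obtained by testing the equation against the Pohozaev multiplier $r\,u'$ (which, in the radial setting, corresponds to $x\cdot\nabla u$) plus a multiple of $u$ itself; the parameter $a$ records the freedom in how much of the ``$u$-multiplier'' one adds. Rather than going through the integral/weak formulation, it is cleaner here to verify the pointwise identity directly, since the statement is an equality of derivatives.

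The key steps are as follows. Differentiate the first bracket on the left:
\[
\Bigl[r^N\Bigl(\tfrac{|u'|^p}{p'}+\hat F(u)\Bigr)\Bigr]'
= Nr^{N-1}\Bigl(\tfrac{|u'|^p}{p'}+\hat F(u)\Bigr) + r^N\Bigl(\tfrac{|u'|^p}{p'}\Bigr)' + r^N \hat f(u)\,u'.
\]
Using $\bigl(\tfrac{|u'|^p}{p'}\bigr)' = |u'|^{p-2}u'\,u'' = \varphi_p(u')'\cdot u' \cdot \tfrac{p-1}{1}$—more precisely, $\tfrac{d}{dr}\tfrac{|u'|^p}{p'} = \varphi_p(u')\,u''$ since $\tfrac{1}{p'}\cdot p = p-1$ and $\tfrac{d}{dr}|u'|^p = p|u'|^{p-2}u'u''$, giving $\tfrac{1}{p'}\cdot p|u'|^{p-2}u'u'' = (p-1)|u'|^{p-2}u'u'' = \varphi_p(u')'$ only after also accounting for $\varphi_p(u')' = (p-1)|u'|^{p-2}u''$; so in fact $\tfrac{d}{dr}\tfrac{|u'|^p}{p'} = u'\,\varphi_p(u')'$. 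Next differentiate the second bracket:
\[
\bigl[a r^{N-1}u\,\varphi_p(u')\bigr]'
= a(N-1)r^{N-2}u\,\varphi_p(u') + a r^{N-1}u'\varphi_p(u') + a r^{N-1}u\,\varphi_p(u')'.
\]
Now I substitute the equation in the form $\bigl(r^{N-1}\varphi_p(u')\bigr)' = -r^{N-1}\hat f(u)$, equivalently $r^{N-1}\varphi_p(u')' = -r^{N-1}\hat f(u) - (N-1)r^{N-2}\varphi_p(u')$, to replace every occurrence of $\varphi_p(u')'$. The terms $a(N-1)r^{N-2}u\varphi_p(u')$ produced by the product rule cancel exactly against the $(N-1)$-term generated by this substitution in $a r^{N-1}u\varphi_p(u')'$, and similarly $r^N u'\varphi_p(u')' $ combines with the substitution to produce $-r^N\hat f(u)u' - (N-1)r^{N-1}u'\varphi_p(u')$; the term $-r^N\hat f(u)u'$ cancels the $r^N\hat f(u)u'$ coming from differentiating $r^N\hat F(u)$. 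Collecting the surviving terms and using $|u'|^p = u'\varphi_p(u')$ to rewrite $u'\varphi_p(u')$ as $|u'|^p$, one is left precisely with
\[
r^{N-1}\Bigl[\Bigl(1-\tfrac{N}{p}+a\Bigr)|u'|^p + N\hat F(u) - a\hat f(u)u\Bigr],
\]
where the coefficient $1-\tfrac{N}{p}$ appears because $N\cdot\tfrac{1}{p'} - (N-1) = \tfrac{N(p-1)}{p} - N + 1 = 1 - \tfrac{N}{p}$.

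I do not expect a genuine obstacle here; the only thing that requires care is bookkeeping of the $(N-1)$-type terms and keeping the distinction between $\varphi_p(u')'$ (the derivative of $\varphi_p(u')$) and $r^{N-1}\varphi_p(u')$ whose derivative is governed by the equation. One subtlety worth flagging: the computation is carried out for $r\neq 0$ (so that the radial form of the operator and the manipulation $|u'|^p = |\varphi_p(u')|^{p'}$ from \eqref{eq:useful_rel} make sense), and the resulting identity, being an equality of continuous functions, then extends to $r=0$ by continuity when $N\ge 1$; since here $\hat f$ is merely continuous, $u\in C^1$ with $r^{N-1}\varphi_p(u')\in C^1$, which is exactly what is needed for the product-rule steps to be legitimate.
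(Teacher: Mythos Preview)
Your proposal is correct and follows essentially the same approach as the paper: both expand the left-hand side by the product rule, compute $\bigl(\tfrac{|u'|^p}{p'}\bigr)' = u'\,\varphi_p(u')'$, substitute the equation in the form $\varphi_p(u')' = -\hat f(u) - \tfrac{N-1}{r}\varphi_p(u')$ (the paper's \eqref{eq:useful_rel}), and collect terms using $u'\varphi_p(u') = |u'|^p$. The only cosmetic difference is that the paper first isolates the relation $(r^{N-1}\varphi_p(u'))'ru' + r^N\hat f(u)u' = 0$ (multiplication by the Pohozaev multiplier $ru'$) before computing the derivative of $r^N|u'|^p/p'$, whereas you differentiate everything directly and substitute afterwards; the algebra and the cancellations are identical.
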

\begin{proof}
On the one hand, by multiplying the equation by $ru'$, we have
\[
(r^{N-1}\varphi_p(u'))'ru'+r^{N}\hat f(u)u'=0.
\]
On the other hand, using the first relation in \eqref{eq:useful_rel}, we compute
\begin{multline*}
\left[r^N\frac{|u'|^p}{p'}\right]'
=\frac{N}{p'}r^{N-1}|u'|^p+\frac{r^N}{p'}\left[|\varphi_p(u')|^{p'}\right]' 
=\frac{N}{p'}r^{N-1}|u'|^p + r^N (\varphi_p(u'))'u'\\
=(r^{N-1}\varphi_p(u'))'ru' + \left(1-\frac{N}{p}\right) r^{N-1} |u'|^p.
\end{multline*}
Moreover, using the second relation in \eqref{eq:useful_rel}, we have
\[
[ar^{N-1}u \varphi_p(u')]'=ar^{N-1} |u'|^p-ar^{N-1}u \hat f(u).
\]
By combining the previous equalities, one obtains the statement.
\end{proof}

\begin{lemma}\label{lem:final}
Suppose that $\Omega=\mathcal B(R_2)$ and that $f$ satisfies $(f_\mathrm{subc})$. There exist three constants $K_1>0$, $K_2\geq0$ and $\bar d>1/\eta$ such that, for every $d\geq \bar d$, the solution $(u_d,v_d)$ of \eqref{eq:shooting} satisfies
\begin{equation}\label{eq:estimate_below1}
R_2^N H_d(r)
+\frac{N}{p^*} R_2^{N-1}\left( \frac{|u_d(r)|^p}{p}+\frac{|u_d'(r)|^p}{p'} \right) 
\geq K_1 f^*(d) d \left(r_0(d)\right)^N-K_2
\end{equation}
for every $r_0(d)\leq r\leq R_2$. Here $r_0(d)$ is the quantity defined in \eqref{eq:r0def} and we use the convention that $N/p^*=0$ in the case $p^*=+\infty$.
\end{lemma}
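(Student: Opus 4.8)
The plan is to integrate the Pohozaev-type identity of Lemma~\ref{lem:Pohozaev} from the origin, extracting the bulk of the estimate from the interval $[0,r_0(d)]$ --- on which Lemma~\ref{lem:r0estimate} gives sharp control of $u_d$ and on which the subcriticality assumption $(f_\mathrm{subc})$ enters. Concretely, I would fix the scaling parameter $a:=N/p^*$, with the convention $a=0$ when $p^*=+\infty$, so that $1-\frac Np+a\geq0$ in every case (it vanishes when $1<p<N$), and, using $(f_\mathrm{subc})$, choose $\mu$ with $\limsup_{s\to+\infty}\frac{f^*(s)s}{F(\eta s)}<\mu<p^*$ (any $\mu$ larger than the finite $\limsup$ when $p\geq N$); then set $K_1:=\frac1\mu-\frac1{p^*}>0$. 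Let $P_a(r):=r^NH_d(r)+ar^{N-1}u_d(r)\varphi_p(u_d'(r))$, so that $P_a'$ is the right-hand side of \eqref{eq:Pohozaev}; since $r^{N-1}\varphi_p(u_d'(r))=v_d(r)$ with $v_d(R_1)=0$, and $H_d$ is continuous, we have $P_a(0)=0$. Integrating \eqref{eq:Pohozaev} on $[0,r]$ and discarding the nonnegative term $(1-\frac Np+a)|u_d'|^p$ then yields, for $r\in(0,R_2]$,
\[
P_a(r)\ \geq\ N\int_0^r t^{N-1}g(u_d(t))\,dt,\qquad g(s):=\hat F(s)-\frac1{p^*}\hat f(s)s .
\]

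For $r\geq r_0(d)$ I would split this integral over $[0,r_0(d)]$ and $[r_0(d),r]$. On $[0,r_0(d)]$, Lemma~\ref{lem:r0estimate}(ii)--(iii) gives $1<\eta d\leq u_d(t)\leq d$ and $0\leq\hat f(u_d(t))u_d(t)\leq f^*(d)d$, while the monotonicity of $\hat F$ on $[1,+\infty)$ gives $\hat F(u_d(t))\geq F(\eta d)$; hence $g(u_d(t))\geq F(\eta d)-\frac1{p^*}f^*(d)d$, and for $d$ large the inequality $f^*(d)d\leq\mu F(\eta d)$ (from the choice of $\mu$) forces $g(u_d(t))\geq K_1 f^*(d)d$, so that $N\int_0^{r_0(d)}t^{N-1}g(u_d)\,dt\geq K_1 f^*(d)d\,r_0(d)^N$. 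On $[r_0(d),r]$ all I need is a lower bound for $g$ uniform over $s\in\mathbb{R}$: indeed $g(s)\geq\hat F(s)\geq0$ for $s\leq1$, while for $s$ large $\hat f(s)s=f(s)s\leq f^*(s)s\leq\mu F(\eta s)\leq\mu F(s)=\mu\hat F(s)$ gives $g(s)\geq(1-\mu/p^*)\hat F(s)\geq0$; thus $g$ is continuous and bounded below on the remaining compact range, say $g\geq -c_2$ on $\mathbb{R}$ with $c_2\geq0$ (and $c_2=0$ when $p\geq N$, since then $g=\hat F$). Therefore $N\int_{r_0(d)}^r t^{N-1}g(u_d)\,dt\geq -c_2 R_2^N=:-K_2$.

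It remains to dominate $P_a(r)$ by the left-hand side of \eqref{eq:estimate_below1}. Since $r\leq R_2$ and $H_d\geq0$ we have $r^NH_d(r)\leq R_2^NH_d(r)$, and Young's inequality gives $u_d\varphi_p(u_d')\leq|u_d|\,|u_d'|^{p-1}\leq\frac{|u_d|^p}{p}+\frac{|u_d'|^p}{p'}$, whence $ar^{N-1}u_d\varphi_p(u_d')\leq\frac N{p^*}R_2^{N-1}\left(\frac{|u_d|^p}{p}+\frac{|u_d'|^p}{p'}\right)$. Combining these bounds with the previous paragraph, for every $r\in[r_0(d),R_2]$,
\[
R_2^NH_d(r)+\frac N{p^*}R_2^{N-1}\left(\frac{|u_d(r)|^p}{p}+\frac{|u_d'(r)|^p}{p'}\right)\ \geq\ P_a(r)\ \geq\ K_1 f^*(d)\,d\,r_0(d)^N-K_2,
\]
which is \eqref{eq:estimate_below1}; it then suffices to choose $\bar d>1/\eta$ large enough that the ``$d$ large'' requirements above hold.

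The step I expect to be the main obstacle is producing the strictly positive bound $g(u_d)\geq K_1 f^*(d)d$ on $[0,r_0(d)]$: this is exactly where $(f_\mathrm{subc})$ is genuinely used, and it rests on the full content of Lemma~\ref{lem:r0estimate} --- the sign of $u_d'$, the two-sided bound $\eta d\leq u_d\leq d$, and the ensuing control $\hat f(u_d)u_d\leq f^*(d)d$ through the majorant $f^*$. A secondary point is to keep the two complementary regimes $1<p<N$ and $p\geq N$ uniform via the convention $N/p^*=0$, which makes the $|u_d'|^p$ term in \eqref{eq:Pohozaev} harmless (and trivializes the $[r_0(d),r]$ integral when $p\geq N$).
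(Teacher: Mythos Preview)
Your proposal is correct and follows essentially the same approach as the paper's proof: both integrate the Pohozaev identity \eqref{eq:Pohozaev} with $a=N/p^*$ on $[0,r]$, drop the nonnegative term $(1-\tfrac Np+a)|u_d'|^p$, bound the boundary term via Young's inequality, and split the resulting integral at $r_0(d)$, using Lemma~\ref{lem:r0estimate} together with $(f_\mathrm{subc})$ on $[0,r_0(d)]$ and a uniform lower bound for $\hat F(s)-\tfrac{1}{p^*}\hat f(s)s$ on $[r_0(d),r]$. Your parameter $\mu$ and constant $K_1=\tfrac1\mu-\tfrac1{p^*}$ correspond exactly to the paper's $\delta$ via $K_1=\delta/N$.
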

\begin{proof}
We consider the Pohozaev-type identity \eqref{eq:Pohozaev} with $a=N/p^*$ and integrate it in $[0,r]$, with $r_0(d)\leq r\leq R_2$. The Young's inequality provides
\begin{multline}\label{eq:estimate_below2}
R_2^N H_d(r)
+\frac{N}{p^*} R_2^{N-1}\left( \frac{|u_d(r)|^p}{p}+\frac{|u_d'(r)|^p}{p'} \right) \\
\geq \int_0^r \left[ N\hat F(u_d(t))-\frac{N}{p^*} \hat f(u_d(t))u_d(t) \right] t^{N-1} \,dt.
\end{multline}
In order to estimate the right hand side of \eqref{eq:estimate_below2}, we notice that assumption $(f_\mathrm{subc})$ implies the existence of $\bar s>1/\eta$ and $\delta>0$ with the property that
\begin{equation}\label{eq:NF}
N F(\eta s)-\frac{N}{p^*} f^*(s)s \geq \delta f^*(s)s >0,
\qquad \text{for every }s\geq \bar s.
\end{equation}
In particular, it also holds
\begin{equation}\label{eq:NF-afs}
N\hat F(s)>N F(\eta s)>\frac{N}{p^*} f^*(s)s\geq \frac{N}{p^*}\hat f(s)s,
\qquad \text{for every }s\geq \bar s.
\end{equation}
We split the right hand side of \eqref{eq:estimate_below2} into two parts that we estimate separately. Concerning the integral in $(r_0(d),r)$, we use relation \eqref{eq:NF-afs} to obtain
\begin{multline}\label{eq:estimate_below3}
\int_{r_0(d)}^r \left[ N\hat F(u_d(t))-\frac{N}{p^*} \hat f(u_d(t))u_d(t) \right] t^{N-1} \,dt \\
\geq \int_{\{t\in[r_0(d),r]:\, u_d(t)<\bar s\}} \left[ N\hat F(u_d(t))-\frac{N}{p^*} \hat f(u_d(t))u_d(t) \right] t^{N-1} \,dt \geq -C,
\end{multline}
where $C\geq0$ is a constant not depending on $d$ nor on $r$, and we have used the fact that $N\hat F(s)-\frac{N}{p^*} \hat f(s)s$ is bounded from below for $s\le \bar s$. 
As for the integral in $(0,r_0(d))$, we use Lemma \ref{lem:r0estimate}-(ii) and (iii) and relation \eqref{eq:NF} to get
\begin{multline}\label{eq:estimate_below4}
\int_0^{r_0(d)} \left[ N\hat F(u_d(t))-\frac{N}{p^*} \hat f(u_d(t))u_d(t) \right]t^{N-1} \,dt \\
\geq \left( N F(\eta d)-\frac{N}{p^*} f^*(d)d \right)\frac{(r_0(d))^N}{N} 
\geq \delta f^*(d)d\frac{(r_0(d))^N}{N},
\end{multline}
for every $d > \bar s$.

By combining \eqref{eq:estimate_below2}, \eqref{eq:estimate_below3} and \eqref{eq:estimate_below4} we obtain
\[
R_2^N H_d(r)
+\frac{N}{p^*} R_2^{N-1}\left( \frac{|u_d(r)|^p}{p}+\frac{|u_d'(r)|^p}{p'} \right) 
\geq \delta f^*(d)d\frac{(r_0(d))^N}{N} - C,
\]
for every $d > \bar s$ and $r_0(d)\leq r\leq R_2$.
Hence we have proved that the statement holds true with $\bar d = \bar s$, $K_1=\delta /N$ and $K_2=C$. 
\end{proof}

Using Lemmas \ref{lem:r0estimate}, \ref{lem:Pohozaev} and \ref{lem:final} we can finally give the proof of \eqref{eq:explosion_d} in the general subcritical case.

\begin{proposition}\label{prop:explosion3} Under the same assumptions as in Lemma \ref{lem:final}, 
property \eqref{eq:explosion_d} holds.
\end{proposition}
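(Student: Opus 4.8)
The plan is to deduce \eqref{eq:explosion_d} by showing that the right-hand side of the estimate \eqref{eq:estimate_below1} of Lemma \ref{lem:final} diverges as $d\to+\infty$, and then closing by a contradiction argument (one could alternatively route through Remark \ref{equivalence} and estimate $H_d(r_1(d))$, but arguing directly is cleaner). As in the proof of Proposition \ref{prop:explosion2}, we may assume $F_\infty=+\infty$, since the complementary case $F_\infty<+\infty$ is already covered by Proposition \ref{prop:explosion1}(i).

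\textbf{Step 1: the key claim.} The heart of the proof is to show that
\[
\Phi(d):=f^*(d)\,d\,(r_0(d))^N\longrightarrow +\infty\qquad\text{as }d\to+\infty .
\]
I would insert the lower bound \eqref{eq:r0estimate} for $r_0(d)$ into $\Phi(d)$; since $t\mapsto t^N$ is increasing, this gives
\[
\Phi(d)\ \ge\ \min\left\{\, R_2^N\,f^*(d)\,d \;;\; c\,d^{\,1+N/p'}\,f^*(d)^{\,1-N/p} \,\right\}
\]
for a constant $c=c(\eta,N,p)>0$. The first entry tends to $+\infty$, since $d\to+\infty$ and $f^*$ is non-decreasing and unbounded (recall $\limsup_{s\to+\infty}f(s)/s^{p-1}=+\infty$ under $(f_\mathrm{subc})$). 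For the second entry: if $p\ge N$ the exponent $1-N/p$ is nonnegative and divergence is immediate; if $1<p<N$, I would use the subcritical growth estimate \eqref{eq:subcrit} coming from $(f_\mathrm{subc})$, namely $f^*(d)\le C_\varepsilon' d^{\,p^*-1-\varepsilon}$ for $d$ large, so that, $1-N/p$ being negative,
\[
c\,d^{\,1+N/p'}\,f^*(d)^{\,1-N/p}\ \ge\ c'\,d^{\,1+N/p'+(p^*-1-\varepsilon)(1-N/p)} .
\]
A direct computation shows the exponent equals $0$ for $\varepsilon=0$ — this is precisely the critical balance — and equals $\varepsilon\,(N-p)/p>0$ for $\varepsilon>0$; hence the second entry also diverges, which proves the claim.

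\textbf{Step 2: the contradiction.} Suppose \eqref{eq:explosion_d} fails: then there exist $M>0$, a sequence $d_n\to+\infty$ and points $r_n\in[0,r_1(d_n)]$ with $u_{d_n}(r_n)+|u_{d_n}'(r_n)|\le M$. For $n$ large we have $d_n\ge\bar d$ and $\eta d_n>M$; since $u_{d_n}(s)>\eta d_n$ for every $s\in[0,r_0(d_n))$ by the very definition \eqref{eq:r0def} of $r_0$, the bound $u_{d_n}(r_n)\le M<\eta d_n$ forces $r_n\ge r_0(d_n)$, while $r_n\le r_1(d_n)\le R_2$. Thus Lemma \ref{lem:final} applies at $r=r_n$. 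Its left-hand side is bounded uniformly in $n$: indeed $|u_{d_n}(r_n)|,|u_{d_n}'(r_n)|\le M$, and $H_{d_n}(r_n)=|u_{d_n}'(r_n)|^p/p'+\hat F(u_{d_n}(r_n))\le M^p/p'+\max\{\hat F(0),\hat F(M)\}$, using that $\hat F$ is decreasing on $[0,1)$ and increasing on $[1,+\infty)$. On the other hand, by Step 1 the right-hand side $K_1\Phi(d_n)-K_2\to+\infty$. This contradiction proves \eqref{eq:explosion_d}.

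I expect Step 1 — and in particular the exact cancellation of the exponent at $\varepsilon=0$ — to be the crux of the matter: it is here, and only here, that the strict subcriticality of $f$ (equivalently, the gap $\varepsilon>0$ in \eqref{eq:subcrit}) enters in an essential way. With merely critical growth, $\Phi(d)$ would stay bounded and the whole scheme would collapse; this is the technical reason why solutions with $u(R_1)>1$ are tied to subcriticality.
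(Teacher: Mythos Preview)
Your proof is correct and follows essentially the same approach as the paper. Step~1 is identical to the paper's argument (same insertion of \eqref{eq:r0estimate}, same case split $p\ge N$ versus $p<N$, same exponent computation); for Step~2 the paper instead evaluates \eqref{eq:estimate_below1} at $r=r_1(d)$, shows $H_d(r_1(d))\to+\infty$ via a short contradiction using $F_\infty=+\infty$, and then invokes Remark~\ref{equivalence}---precisely the alternative route you mention and dismiss as less direct.
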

\begin{proof}
Again we can assume $F_\infty=+\infty$, since the complementary case was treated in Proposition \ref{prop:explosion1}-(i).
We aim to estimate from below the right hand side of \eqref{eq:estimate_below1}.

By \eqref{eq:r0estimate} and the fact that $f^*(s)\ge f(s) >0$ for $s>1$, we get for $d>1$
\begin{multline*}
K_1f^*(d)d(r_0(d))^N-K_2\ge K_1f^*(d)d\min\left\{R_2^N;[(1-\eta)dp']^{\frac{N}{p'}}\left(\frac{N}{f^*(d)}\right)^{\frac{N}{p}}\right\}-K_2\\
=K_1\min\left\{R_2^N f^*(d)d;[(1-\eta)p']^{\frac{N}{p'}}N^{\frac{N}{p}}d^{\frac{N}{p'}+1}f^*(d)^{1-\frac{N}{p}}\right\}-K_2.
\end{multline*}
We claim that both the terms in the above ``min'' go to infinity when $d \to +\infty$. Indeed, $f^*(d)d \to +\infty$ since $f^*$ is a positive non-decreasing function. As for the term $d^{\frac{N}{p'}+1}f^*(d)^{1-\frac{N}{p}}$, we distinguish two cases. If $N \leq p$, the conclusion is straightforward. In the case $N > p$, we first observe that, from relation \eqref{eq:subcrit} together with the fact that $f^*$ is the smallest non-decreasing function above $f$, it follows that
$$
f^*(s) \le  C_\varepsilon s^{p^*-1-\varepsilon} \quad \text{ for every } s>s_\varepsilon,
$$
Therefore, as $d \to +\infty$
$$
d^{\frac{N}{p'}+1}f^*(d)^{1-\frac{N}{p}}\ge (C_\varepsilon)^{1-\frac{N}{p}} d^{\frac{N}{p'}+1-\left(\frac{N}{p}-1\right)\left(p^*-1-\varepsilon \right)}=(C_\varepsilon)^{1-\frac{N}{p}}d^{\varepsilon\left(\frac{N}{p}-1\right)} \to +\infty.
$$

Thus we have obtained that the left hand side of \eqref{eq:estimate_below1} diverges as $d\to +\infty$, uniformly in $[r_0(d),R_2]$. So, in particular
\begin{equation}\label{Hr1}
\lim_{d\to+\infty}\left[R_2^N H_d(r_1(d))+\frac{N}{p^*} R_2^{N-1}\left( \frac{(u_d(r_1(d)))^p}{p}+\frac{|u_d'(r_1(d))|^p}{p'}\right) \right] =+\infty.
\end{equation}
We claim that $\lim_{d\to+\infty}H_d(r_1(d))=+\infty$. Indeed, suppose by contradiction that there exists a sequence $(d_n)_n$ such that $\lim_{n\to+\infty}d_n=+\infty$ and  $H_{d_n}(r_1(d_n))\le M $ for some $M\ge 0$ and for all $n$, then both $\frac1{p'}|u'_{d_n}(r_1(d_n))|^p\le M$ and $\hat F(u_{d_n}(r_1(d_n))\le M$ for all $n$. 
Since $F_\infty=+\infty$, the boundedness of $(\hat F(u_{d_n}(r_1(d_n)))_n$ implies that also $(u_{d_n}(r_1(d_n)))_n$ is bounded. This contradicts \eqref{Hr1} and proves the claim.
Finally, by  Remark~\ref{equivalence}, the conclusion follows. 
\end{proof}

\section{Proof of the main results}\label{Sec:main}

In this section, we take advantage of the elastic-type property \eqref{eq:explosion_d} to prove our main results, Theorems \ref{th:bounds}, \ref{th:C1>0} and \ref{th:C1=0}. 
We first show that \eqref{eq:explosion_d} has an immediate consequence on the sign of $u'_d$, for $d$ sufficiently large.

\begin{proposition}\label{Pr:d*}
Let $\Omega$ be either the annulus $\mathcal A(R_1,R_2)$ or the ball $\mathcal B(R_2)$ and let $f$ satisfy $(f_{\mathrm{reg}})$, $(f_{\mathrm{eq}})$, and $(f_0)$. 
In the case $\Omega=\mathcal B(R_2)$ assume in addition either $(f_\mathrm{subl})$ or $(f_\mathrm{subc})$.
There exists $d^* > 1$ such that if $(u_d,v_d)$ solves \eqref{eq:shooting}, then 
\begin{equation}\label{u-dec}
u'_d(r) < 0 \quad \mbox{ for every } r \in (R_1,R_2]\mbox{ and }d\ge d^*.
\end{equation}
In particular, if $u$ solves \eqref{main}, then $u(R_1)<d^*$.
\end{proposition}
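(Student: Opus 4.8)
The plan is to derive the statement directly from the elastic-type property \eqref{eq:explosion_d}, which holds under the present hypotheses by Propositions~\ref{prop:explosion1}, \ref{prop:explosion2} and \ref{prop:explosion3}. Two elementary facts will be used repeatedly: by the first equation in \eqref{eq:shooting} one has $v_d = r^{N-1}\varphi_p(u_d')$, so $v_d$ and $u_d'$ always carry the same sign; and, by Lemma~\ref{lem:rad} together with the discussion following it, a radial solution $u$ of \eqref{main} with $u(R_1)=d$ is exactly a solution $(u_d,v_d)$ of \eqref{eq:shooting} with $v_d(R_2)=0$. Since \eqref{eq:explosion_d} means precisely that $\min_{r\in[R_1,r_1(d)]}\bigl(u_d(r)+|u_d'(r)|\bigr)\to+\infty$ as $d\to+\infty$, I would first fix $d^*>1$ so large that $u_d(r)+|u_d'(r)|>1$ for every $d\ge d^*$ and every $r\in[R_1,r_1(d)]$, and then fix $d\ge d^*$ and prove \eqref{u-dec}.

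First I would check that $u_d'<0$ on a right neighbourhood of $R_1$: since $u_d(R_1)=d>1$, by continuity $u_d>1$, hence $\hat f(u_d)=f(u_d)>0$, near $R_1$; integrating the second equation in \eqref{eq:shooting} and using $v_d(R_1)=0$ gives $v_d(r)<0$, hence $u_d'(r)<0$, for $r>R_1$ close to $R_1$. Next, suppose by contradiction that $u_d'$ vanishes somewhere in $(R_1,r_1(d)]$ and let $\bar r$ be the first such point. Then $u_d'<0$, and hence $v_d<0$, on $(R_1,\bar r)$, while $v_d(\bar r)=0$; since $v_d$ reaches $0$ from below at $\bar r$ this forces $v_d'(\bar r)\ge 0$. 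I would then distinguish three cases according to $u_d(\bar r)$. If $u_d(\bar r)>1$, then $v_d'(\bar r)=-\bar r^{N-1}f(u_d(\bar r))<0$, contradicting $v_d'(\bar r)\ge 0$. If $u_d(\bar r)=1$, then $(u_d(\bar r),v_d(\bar r))=(1,0)$, which is impossible for $d\neq1$ by the uniqueness for \eqref{eq:shooting} (Lemma~\ref{lem:cont_dep}). If $u_d(\bar r)<1$, then, since $\bar r\le r_1(d)$, we have $u_d(\bar r)\ge 0$ by \eqref{eq:r1def}, so $u_d(\bar r)+|u_d'(\bar r)|=u_d(\bar r)<1$, contradicting the choice of $d^*$. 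Hence $u_d'$ never vanishes on $(R_1,r_1(d)]$, and therefore $u_d'<0$ there.

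If $r_1(d)=R_2$ this already gives \eqref{u-dec}. Otherwise, by \eqref{eq:r1def} we have $u_d(r_1(d))=0$ and, by the previous step, $u_d'(r_1(d))<0$. As long as $u_d<0$ one has $\hat f(u_d)=0$, so by \eqref{ode2} the quantity $r^{N-1}\varphi_p(u_d')$ stays constant and equal to $r_1(d)^{N-1}\varphi_p(u_d'(r_1(d)))<0$; thus $u_d'<0$ and $u_d$ is strictly decreasing, so it cannot return to the value $0$, and a standard continuation argument shows $u_d<0$ and $u_d'<0$ on the whole $(r_1(d),R_2]$. In all cases \eqref{u-dec} holds. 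Finally, if $u$ is a radial solution of \eqref{main} with $u(R_1)=d\ge d^*$, then $u=u_d$ and $v_d(R_2)=R_2^{N-1}\varphi_p(u_d'(R_2))<0$, contradicting $v_d(R_2)=0$; hence $u(R_1)<d^*$.

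The hard part of the whole story lies elsewhere, namely in establishing the elastic property \eqref{eq:explosion_d} (Propositions~\ref{prop:explosion1}--\ref{prop:explosion3}); granted that, the argument above is a short phase-plane analysis. Within it, the one step that needs some care is the case of a first critical point $\bar r$ with $u_d(\bar r)>1$: there the value $u_d(\bar r)$ itself need not be large, so \eqref{eq:explosion_d} gives no information, and one must instead exploit the sign of $v_d'=-r^{N-1}f(u_d)$ together with the fact that $v_d$ attains $0$ from below at $\bar r$.
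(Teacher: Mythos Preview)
Your proof is correct and takes essentially the same approach as the paper: both derive the result from the elastic property \eqref{eq:explosion_d} via a short phase-plane analysis showing that a first critical point of $u_d$ (for $d$ large) would have to satisfy $0\le u_d<1$ and $u_d'=0$, contradicting \eqref{eq:explosion_d}. The paper argues by contradiction with sequences $d_n\to+\infty$ and is terser in justifying $0<u_{d_n}(r_n)<1$, while you fix $d^*$ first, spell out the three-case analysis, and explicitly extend the conclusion past $r_1(d)$; these are purely organizational differences.
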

\begin{proof}
Assume by contradiction that there exist a sequence $d_n$, with $d_n > 1$ and $d_n \to + \infty$, and a sequence $r_n \in (R_1,R_2]$ such that $u'_{d_n}(r_n) \geq 0$. Since $u_{d_n}(R_1)=d_n > 1$, by $(f_\mathrm{eq})$ we obtain that $u'_{d_n}(r)< 0$ in a right neighborhood of $R_1$ (compare with the proof of Lemma \ref{lem:r0estimate}-(i)); hence, we can assume without loss of generality that $r_n$ is the first minimum point of $u_{d_n}$. As a consequence, $u'_{d_n}(r_n) = 0$ and, using $(f_\textrm{eq})$ again, $0 < u_{d_n}(r_n) < 1$. Therefore, being $r_n$ the first minimum point for $u_{d_n}$, $r_n\le r_1(d_n)$. A contradiction with 
\eqref{eq:explosion_d} is therefore obtained. This implies that if $d\ge d^*$, $u'_d(R_2)\neq 0$ and consequently $u=u_d$ does not solve \eqref{main}.
\end{proof}

Using Proposition \ref{Pr:d*}, the proof of Theorem \ref{th:bounds} easily follows.

\begin{proof}[Proof of Theorem \ref{th:bounds}]
By Proposition \ref{Pr:d*}, if $u$ is a radial solution of \eqref{main} and, hence, $u_d=u$ is a solution of \eqref{ode2} for some $d > 0$, it has to be $d < d^*$. Using the very same arguments of the proof of Proposition~\ref{prop:explosion1}-(i)
we obtain, for every $d \in (0,d^*)$ and every $r \in [R_1,R_2]$,
\[
\frac{|u_d'(r)|^p}{p'} \leq \max\{\hat F(0),\hat F(d^*)\}
\]
and
\[  
\vert u_d(r) - d \vert \leq (p'\max\{\hat F(0),\hat F(d^*)\})^{1/p}(R_2-R_1).
\]
Hence the conclusion follows for $C = d^* + (p'\max\{\hat F(0),\hat F(d^*)\})^{1/p}(1+R_2-R_1)$.
\end{proof}

In the rest of the section we will prove Theorems \ref{th:C1>0} and \ref{th:C1=0}. 
As already mentioned in the Introduction, we prove here only the existence of solutions satisfying $u(R_1)>1$, and we refer to \cite[Theorem 1.2 and 1.4]{ABF} for the existence of solutions with $u(R_1)<1$. 
Therefore, having in mind the notation and the strategy described in Section \ref{Sec:shooting}, we can suppose $d>1$; from now on, $d^* > 1$ is the constant given by Proposition \ref{Pr:d*}. 

\begin{proof}[Proof of Theorem \ref{th:C1>0}] For every $d>1$, let $(\rho_d,\theta_d)$ be the solution of \eqref{sys2} with initial conditions \eqref{ci2}; notice that $\theta_d(R_1)=0$ because $d>1$.
On the one hand, the elastic-type property (see in particular relation \eqref{u-dec}) provides
$$
\theta_{d^*}(R_2)<\pi_p.
$$
On the other hand, the assumption $C_1>\lambda_{k+1}^{\mathrm{rad}}$ for some integer $k\geq1$, together with Lemma \ref{lem:d_close1}, provides the existence of $\bar\delta$ (which depends on $k$), such that
\[
\theta_d(R_2)> k\pi_p \quad\text{for all }d\in (1,1+\bar\delta].
\]

The previous two inequalities, together with the continuity of the map $d \mapsto \theta_d(R_2)$ (see Corollary \ref{cor:limit_rho_as_d_to0}), imply that for all $j=1,\dots,k$ 
there exists $d_j\in(1,d^*)$ for which $\theta_{d_j}(R_2)=j\pi_p$. This corresponds to $u_{d_j}'(R_2)=0$, providing the desired solutions $u_1,\ldots,u_k$ of \eqref{main}.

In order to prove the oscillatory behavior of each $u_j$ it suffices to remark that, since $\theta_{d_j}(r)$ is monotone increasing (see \eqref{sys2} and recall $(f_\textrm{eq})$), there exist exactly $j$ radii $r_1,\ldots,r_{j}\in (R_1,R_2)$ such that $\theta_{d_j}(r_1)=\frac{1}{2}\pi_p$, $\theta_{d_j}(r_2)=\frac{3}{2}\pi_p,\ldots,\theta_{d_j}(r_{j})=\left(j-\frac{1}{2}\right)\pi_p$.
\end{proof}

For the proof of Theorem \ref{th:C1=0} we still need a further result, which can be proved by combining the arguments used in the proof of \cite[Theorem~1.2]{ABF} (when $d \in (0,1)$) with Proposition \ref{Pr:d*}. Since the complete proof is quite long and it is not easy to summarize the required changes, for the reader's convenience we give all the details in a final Appendix. 

\begin{proposition}\label{prop:d_k}
Under the assumptions of Proposition \ref{Pr:d*}, for every $d>1$, let $(\rho_d,\theta_d)$ be the solution of \eqref{sys2} with initial conditions \eqref{ci2}.
For any integer $k \geq 1$ and any $\varepsilon > 0$,
there exists $R_*(k,\varepsilon) > 0$ such that if $R_1 < \varepsilon R_2$ and $R_2 > R_*(k,\varepsilon)$ there exists 
$\hat{d}_k \in (1,d^*)$ such that 
\begin{equation}\label{eq:thetaR2>}
\theta_{\hat{d}_k}(R_2) > k\pi_p.
\end{equation}
\end{proposition}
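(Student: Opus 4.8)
The plan is to establish \eqref{eq:thetaR2>} by comparing the angular dynamics of the shooting solution $(\rho_d,\theta_d)$ with a suitable lower bound that forces many half-turns when $R_2$ is large. Since $\theta_d(R_1)=0$ for $d>1$, the goal is to produce an initial datum $\hat d_k$ for which the total angular displacement over $[R_1,R_2]$ exceeds $k\pi_p$, despite the fact that neither $d$ close to $1$ (where $C_1=0$ makes the linearization degenerate) nor $d$ close to $d^*$ (where Proposition \ref{Pr:d*} forces monotonicity, hence less than one half-turn) does the job. The key structural fact is that for $d \in (1,d^*)$ the solution $u_d$ stays in a region where we have uniform control: by Theorem \ref{th:bounds} (or directly the arguments in its proof) all solutions $u_d$ with $d<d^*$ are bounded in $C^1$ by a constant depending only on $f$, $\Omega$, $p$, and in particular $u_d(r)$ and $u_d'(r)$ range in a fixed compact set.

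First I would fix an intermediate value $d_0 \in (1,d^*)$ — for instance any $d_0$ such that $u_{d_0}$ oscillates, i.e. $u_{d_0}(r)-1$ changes sign at least once in $(R_1,R_2)$; such a $d_0$ exists for $R_2$ large by the arguments of \cite[Theorem 1.2]{ABF} adapted as in the Appendix, because once $u_d$ crosses the level $1$ the nonlinearity $\hat f$ reverses sign and the trajectory is pushed back, so that for $R_2$ sufficiently large relative to the ``time'' needed to traverse the region $\{0<u<1\}$ and back, the solution completes at least one half-turn. Then I would iterate: on each interval where $u_d$ stays in a fixed annular region $\{1-c \le u \le 1+c\}$ bounded away from the two degenerate points $u=0$ (trivial dynamics) and... actually the relevant mechanism is that in the region $\{0<u\le 1-\delta\}$ we have $\hat f(u)(u-1) \ge \delta_1 >0$ and $|u'|$ bounded, so from the second equation of \eqref{sys2}, $\theta_d' \ge c_1 r^{N-1}/\rho^2 \ge c_2 r^{N-1}$ using the upper bound on $\rho$ (equivalently on $|u-1|^p+r^{(N-1)p'}|u'|^p$). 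Integrating, the angle increases by at least $c_2\int r^{N-1}\,dr$ over any such sub-interval of definite length. The point is to choose $R_1<\varepsilon R_2$ and $R_2 > R_*(k,\varepsilon)$ large enough that the solution is confined for a long $r$-interval, accumulating more than $k\pi_p$ in angle.

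The cleaner route, which I would actually follow, mirrors the technique of \cite{BosZan13} alluded to in the Introduction: consider the continuous family $d \mapsto \theta_d(R_2)$ on $[d_0', d_0'']\subset(1,d^*)$ chosen so that $u_d$ has a first zero $r_1(d)<R_2$ (this holds near $d^*$ fails, so instead pick $d$ moderate), track the solution up to the first time $\tau(d)$ at which $u_d=1-\delta$, apply the uniform rotation estimate on $[\tau(d), R_2]$ via the Sturm-type comparison against the constant-coefficient equation $\vartheta' = c\,r^{N-1}[\,\cdot\,]$ exactly as in Lemma \ref{lem:d_close1}, and observe that $R_2 - \tau(d)$ is bounded below by a positive quantity when $R_2$ is large (since $\tau(d)$ depends on the $C^1$-bounds, not on $R_2$). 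Choosing $R_*(k,\varepsilon)$ so that this lower interval contributes more than $k\pi_p$ to the angle yields $\theta_{\hat d_k}(R_2)>k\pi_p$ for $\hat d_k$ in this range. The condition $R_1<\varepsilon R_2$ enters to guarantee the weight $r^{N-1}$ (or the reduced-equation weight $a(t)$ in the annular case) does not collapse the rotation near $R_1$ and that the available ``angular budget'' $\int_{R_1}^{R_2} r^{N-1}\,dr$ is genuinely large.

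\textbf{Main obstacle.} The delicate point is the uniformity: I must show that the first radius $\tau(d)$ where $u_d$ reaches the level $1-\delta$ (equivalently, the radius after which rotation proceeds at a definite rate) is bounded above by a constant independent of $R_2$, for all $d$ in the relevant range $(1,d^*)$. This requires combining the a priori $C^1$-bound of Theorem \ref{th:bounds} with a quantitative lower bound on how fast $u_d$ must decrease while $u_d \in (1-\delta, d^*)$ — essentially an ODE estimate showing $u_d$ cannot linger near level $1$ for a long $r$-interval unless it is the constant solution, which is exactly the kind of argument carried out for $d\in(0,1)$ in \cite{ABF} and which must be transplanted (with the sign of $u-1$ reversed) to $d>1$; hence the reference to the Appendix. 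Once that uniformity is in hand, the comparison argument and the choice of $R_*(k,\varepsilon)$ are routine.
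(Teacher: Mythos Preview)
Your proposal has the right intuition — find an intermediate initial datum $\hat d_k$ for which the angular speed is bounded below over a long $r$-interval — but it has a genuine gap: you never control the \emph{radial} variable $\rho_d$ (or $\ell_d$) during the rotation. The angular equation in \eqref{sys2} reads, after the substitutions \eqref{eq:polar_relation},
\[
\theta_d' = r^{N-1}\Big[\tfrac{p-1}{r^{(N-1)p'}}|\sin_p\theta_d|^{p'} + \tfrac{\hat f(u_d)(u_d-1)}{\rho_d^2}\Big],
\]
and since $C_1=0$ the ratio $\hat f(u)(u-1)/|u-1|^p$ tends to $0$ as $u\to1$. Hence the angular speed degenerates if $\rho_d$ drifts toward $0$; symmetrically, if $\rho_d$ grows so that $u_d$ reaches $0$, the dynamics become trivial. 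Your scheme ``reach level $1-\delta$ at time $\tau(d)$, then rotate uniformly on $[\tau(d),R_2]$'' implicitly assumes the trajectory stays in a fixed annulus around $(1,0)$ for many half-turns, but you give no mechanism for this. Invoking Theorem \ref{th:bounds} does not help: its constant depends on $\Omega$ (hence on $R_2$), and in any case it furnishes only an \emph{upper} bound on $\rho_d$, not the lower bound you need.

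The paper's proof supplies exactly the missing ingredient, and it differs from your outline in two essential ways. First, it rescales to coordinates $(u,w)$ with $w=R_2^{1-N}v$, so that the weights $(r/R_2)^{N-1}$ in \eqref{sys3} are bounded independently of $R_2$ on $[\varepsilon R_2,R_2]$; this makes the subsequent phase-plane estimates uniform in $R_2$. Second, it constructs autonomous spiral barriers $\ell_\pm(\phi)$ solving $d\ell/d\phi = \ell\,\mathcal{M}_\pm$, where $\mathcal{M}_\pm$ bound the ratio of radial to angular speed uniformly on $[\varepsilon R_2,R_2]$; these spirals trap the trajectory in a fixed annulus $\mathcal{A}_k=\{\check\ell_k\le\ell\le\hat\ell_k\}$ for at least $k$ half-turns. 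The datum $\hat d_k$ is then chosen by continuity (using the elastic property \eqref{eq:explosion_d} to guarantee $\ell_{\tilde d}(\varepsilon R_2)>1$ for some $\tilde d$) so that $\ell_{\hat d_k}(\varepsilon R_2)$ sits at a prescribed value $\ell_k^*$ inside $\mathcal{A}_k$. On $\mathcal{A}_k$ the angular speed is bounded below by a fixed $\delta_k^*>0$, and integrating over $[\varepsilon R_2,R_2]$ gives $\phi_{\hat d_k}(R_2)-\phi_{\hat d_k}(\varepsilon R_2)\ge (1-\varepsilon)R_2\,\delta_k^*>k\pi_p$ once $R_2>R_*(k,\varepsilon)$. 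Without this spiral-trapping step (the core of the \cite{BosZan13} technique you cite but do not implement), the argument does not close.
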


\begin{proof}[Proof of Theorem \ref{th:C1=0}] 
In view of Proposition \ref{prop:d_k}, it holds that $\theta_{\hat{d}_k}(R_2) > k\pi_p$ for some $\hat{d}_k > 1$. On the other hand, 
$\theta_d(R_2) < \pi_p$ both when $d$ is close enough to $1$ (by Lemma \ref{lem:d_close1} and the assumption $C_1 = 0 < 
\lambda_2^{\textnormal{rad}}$) and for $d = d^*$ (by Proposition~\ref{Pr:d*}).  

Then, by continuity, it is possible to find $2k$ values $d_j^\pm$ for $j=1\ldots,k$, with
\[
1 < d_1^-<\ldots<d_k^- < \hat{d}_k < d_k^+<\ldots<d_1^+ < d^* 
\]
and such that
$\theta_{d_j^\pm}(R_2) = j\pi_p$, $j=1\ldots,k$, giving rise to the desired solutions $u^\pm_j$. The oscillatory behavior is then proved as in Theorem \ref{th:C1>0}. In fact, by \eqref{sys2} and $(f_\textrm{eq})$, $\theta_{d_j^\pm}$ is increasing for every $j=1,\dots, k$, and consequently, there exist exactly $2j$ radii  $r_1^-,\ldots,r^-_{j}, r_1^+,\ldots,r^+_{j}\in (R_1,R_2)$ such that $\theta_{d^\pm_j}(r^\pm_1)=\frac{1}{2}\pi_p$, $\theta_{d^\pm_j}(r^\pm_2)=\frac{3}{2}\pi_p$,$\dots,$ $\theta_{d^\pm_j}(r^\pm_{j})=\left(j-\frac{1}{2}\right)\pi_p$.
\end{proof}

We close this section with two final remarks, discussing possible variants of our main results.

\begin{remark}\label{rem:C1=0}
We observe that the following statement, which can be considered as an intermediate result between Theorem \ref{th:C1>0} and Theorem \ref{th:C1=0}, holds true: 
\smallbreak
\noindent
\textit{Under the assumptions of Theorem \ref{th:bounds}, for any integer $k \geq 1$ and any $\varepsilon > 0$ there exists $R_*(k,\varepsilon) > 0$ such that if 
$$
R_1<\varepsilon R_2\quad\mbox{and}\quad R_2 > R_*(k,\varepsilon),
$$
then problem \eqref{main} has 
\begin{itemize}
\item[(i)] at least $2k$ distinct non-constant radial solutions;
\item[(ii)] at least $4k - 2(l-1)$ distinct non-constant radial solutions, if we further suppose that $(f_1)$ is satisfied with $C_1<\lambda_{l+1}^\mathrm{rad}$ for some $l=1,\ldots,k$.
\end{itemize}}
\smallbreak

The proof is really the same as the one of Theorem \ref{th:C1=0}. 
As for (i), Proposition~\ref{prop:d_k} yields the existence of $\hat{d}_k > 1$ such that $\theta_{\hat{d}_k}(R_2) > k \pi_p$ (notice indeed that the assumption $C_1 = 0$ is not used in the corresponding proof), so that $k$ radial solutions (such that $u(r) - 1$ has respectively $1,2,\ldots,k$ zeros) are found since $\theta_{d^*}(R_2) < \pi_p$. A symmetric argument works for $d \in (0,1)$, thus providing the $2k$ solutions mentioned in (i).

\begin{figure}[h!t]
\begin{center}
\includegraphics[scale=.8]{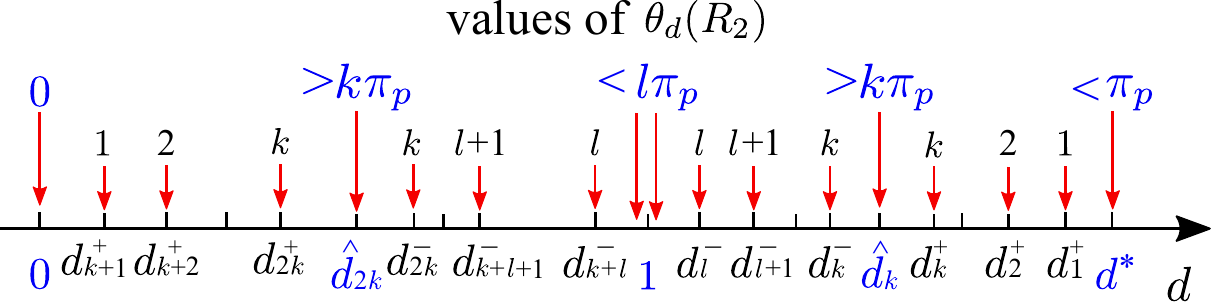}
\end{center}
\caption{Qualitative representation of the initial data $d$ to which correspond radial solutions of \eqref{main} in the case $C_1<\lambda_{l+1}^\mathrm{rad}$ for some $l=1,\ldots,k$.}
\label{fig:C1minore_l}
\end{figure}

Concerning (ii), the assumption $C_1<\lambda_{l+1}^\mathrm{rad}$ is used to ensure, by Lemma \ref{lem:d_close1}, that $\theta_d(R_2) < l \pi_p$ for $d$ close enough to $1$; as a consequence, $k - (l-1)$ further solutions (such that $u(r) - 1$ has respectively $l,l+1,\ldots,k$ zeros) appear. Since the same argument works for $d \in (0,1)$ as before, the conclusion follows, cf. Figure \ref{fig:C1minore_l}.

The drawback of this result is that (focusing for simplicity on the case of the ball $\Omega = \mathcal{B}(R_2)$) the conditions $C_1 < \lambda_{l+1}^\mathrm{rad}$ and $R_2 > R_*(k)$ are in competition with each other (since $\lambda_{l+1}^\mathrm{rad}(\mathcal{B}(R_2)) \to 0^+$ for $R_2 \to +\infty$) unless $C_1 = 0$, that is, in the case of Theorem \ref{th:C1=0}. For this reason we do not insist further on this topic; however, we think it is worth mentioning in order to better highlight the multiplicity scheme. \hfill $\diamond$
\end{remark}

\begin{remark}\label{rem:+zeri}
A careful inspection of the proofs shows that our multiplicity results are still valid when $f$ is defined on a compact interval $[0,\bar{u}]$, for some $\bar{u} > 1$, and satisfies:
\begin{itemize}
\item[$(f_\textrm{reg}')$] $f \in \mathcal{C}([0,\bar{u}]) \cap \mathcal{C}^1((0,\bar{u}))$;
\item[$(f_\textrm{eq}')$] $f(0) = f(1) = f(\bar{u})= 0$, $f(s) < 0$ for $0 < s < 1$ and $f(s) > 0$ for $1 < s < \bar{u}$; 
\item[$(f_0)$] $\liminf_{s\to0^+} \frac{f(s)}{s^{p-1}}  >-\infty$;
\item[$(f_1)$] there exists $C_1 := \lim_{s\to1} \frac{f(s)}{\vert s - 1 \vert^{p-2} (s-1)}$;
\item[$(f_{\bar{u}})$] $\liminf_{s\to \bar{u}^-} \frac{f(s)}{- (\bar{u} - s)^{p-1}} > - \infty$.
\end{itemize}
This situation can be interpreted as a limit-case of $(f_\mathrm{subl})$ (since one could extend $f$ outside $[0,\bar{u}]$ by setting $f(s) = 0$ for $s > \bar{u}$; notice however that $(f_\textrm{eq})$ would not be satisfied) but the proof is even simpler. Indeed, the function $u \equiv \bar{u}$ is now a further constant solution of \eqref{main}, so that $\theta_{\bar{u}} \equiv 0$: therefore, non-constant solutions with $d \in (1,\bar{u})$ can be proved to exist with the very same arguments used in \cite{ABF} for solutions with $d \in (0,1)$ (notice in particular that the machinery of Section \ref{Sec:elastic} of the present paper is not necessary). The only delicate point is that one has to assume the extra-condition $(f_{\bar{u}})$, which is needed to ensure the uniqueness of the Cauchy problem near $u = \bar{u}$ (in the same way as $(f_0)$ was needed in \cite[Lemma~2.2]{ABF} for the uniqueness near $u = 0$). We think that both assumption $(f_{\bar{u}})$ and $(f_0)$ could be removed via an approximation procedure on $f$ (hence giving rise to slightly generalized versions of the result results of this paper, as well as of the ones in \cite{ABF}) but we have preferred to focus on our simpler setting, avoiding further technicalities. \hfill $\diamond$
\end{remark}

\section{Appendix}\label{Sec:appendix}

We give below the proof of Proposition \ref{prop:d_k}.
We treat the two cases $\Omega=\mathcal B(R_2)$ and $\Omega=\mathcal A(R_1,R_2)$ simultaneously, by taking into account that the condition $R_1<\varepsilon R_2$ is trivially verified for all $\varepsilon>0$ when $R_1=0$, that is in the case of the ball. Hence, if $\Omega=\mathcal B(R_2)$, for any $k\ge1$ we can fix any $\varepsilon>0$ and consider $R_*$ only depending on $k$. 

It is convenient to write the equation in \eqref{ode2} as follows
\begin{equation}\label{sys3}
u' = \varphi_p^{-1}\left( \left(\frac{R_2}{r}\right)^{N-1} w\right), \qquad
w' = - \left(\frac{r}{R_2}\right)^{N-1} \hat f(u),
\end{equation}
for $r\in (R_1,R_2)$.
The advantage of this new scaling is that the maximum of $(r/R_2)^{N-1}$ in $[R_1,R_2]$ is independent of $R_2$ and that, at the same time, its minimum is positive in $[\varepsilon R_2,R_2]$ for any $\varepsilon>0$.
Comparing \eqref{sys3} with the first two equations in \eqref{eq:shooting}, it is immediately realized that, since
\begin{equation}\label{eq:relation_w_v}
w(r)=R_2^{1-N}v(r)
\end{equation}
all the properties discussed in Section \ref{Sec:shooting} still hold true for this slightly different planar formulation of \eqref{ode2}. In particular, we define  $(u_d,w_d)$ as the solution of \eqref{sys3} satisfying $(u_d(R_1),w_d(R_1)) = (d,0)$ and we pass to polar-like coordinates around the point $(1,0)$ as in \eqref{polari}, that is,
\begin{equation}\label{eq:p-polar_ell_phi}
\begin{cases}
u(r)-1=\ell(r)^\frac{2}{p} \cos_p(\phi(r))\\
w(r)=-\ell(r)^\frac{2}{p'} \sin_p(\phi(r)).
\end{cases}
\end{equation}
We thus obtain (compare with \eqref{sys2}) the system
\begin{equation}\label{sys4a}
\left\{
\begin{array}{l}
\displaystyle \ell'= \frac{p}{2\ell}\, \left(\frac{R_2}{r} \right)^{(N-1)(p'-1)} \varphi_{p'}(w) \, \left[\varphi_p(u-1)-\left(\frac{r}{R_2}\right)^{(N-1)p'} \hat f(u) \right] =: \mathrm{P}(r,\ell,\phi)
  \\
\displaystyle \phi'= \frac{1}{\ell^2} \left(\frac{R_2}{r} \right)^{(N-1)(p'-1)}\left[ (p-1) \vert w \vert^{p'}+\left(\frac{r}{R_2}\right)^{(N-1)p'} \hat f(u)(u-1) \right] =: \Theta(r,\ell,\phi),
\end{array}
\right.
\end{equation}
with initial conditions 
\begin{equation}\label{eq:ic_uw}
\ell(R_1)=(d-1)^{\frac{p}{2}},\quad \phi(R_1)=0.
\end{equation}
We denote $(\ell_d,\phi_d)$ the solution of \eqref{sys4a} and \eqref{eq:ic_uw}, and we remark that, by \eqref{eq:relation_w_v}, 
\begin{equation}\label{eq:theta_phi}
\theta_{d}(R_2) > k\pi_p \qquad\Leftrightarrow\qquad \phi_{d}(R_2) > k\pi_p.
\end{equation}
Furthermore, by \eqref{eq:p-polar_ell_phi} and recalling that $|\cos_p(\phi)|^p+(p-1)|\sin_p(\phi)|^{p'}=1$, it easily follows that
\begin{equation}\label{eq:ellequadro}
\ell^2=|u-1|^p+(p-1)|w|^{p'}.
\end{equation}
We observe for future use that another consequence of the elastic-type property \eqref{eq:explosion_d} is the following.

\begin{lemma}
Under the assumptions and notations above, there exists $\tilde d>1$ for which 
\begin{equation}\label{eq:tilde_d}
\ell_{\tilde d}(\varepsilon R_2)> 1.
\end{equation} 
\end{lemma}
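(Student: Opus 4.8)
The plan is to use the elastic-type property \eqref{eq:explosion_d} directly, exploiting the fact that $\varepsilon R_2 \in [R_1, r_1(d)]$ for $d$ large. First I would recall that, by Proposition \ref{Pr:d*}, for $d \geq d^*$ the solution $u_d$ is strictly decreasing on all of $(R_1, R_2]$, in particular it never vanishes there, so that $r_1(d) = R_2$ for every $d \geq d^*$. Consequently, for any such $d$ the point $r = \varepsilon R_2$ (with $R_1 < \varepsilon R_2 \leq R_2$, which is trivially true when $R_1 = 0$ and holds by hypothesis otherwise) lies in the interval $[R_1, r_1(d)]$ on which the elastic property is asserted.

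Next I would invoke \eqref{eq:explosion_d}, which states precisely that $u_d(r) + |u_d'(r)| \to +\infty$ as $d \to +\infty$, uniformly for $r \in [R_1, r_1(d)]$; in particular evaluating at $r = \varepsilon R_2$ gives $u_d(\varepsilon R_2) + |u_d'(\varepsilon R_2)| \to +\infty$. It remains to translate this into a lower bound for $\ell_d(\varepsilon R_2)$. For this I would use \eqref{eq:ellequadro}, namely $\ell_d^2 = |u_d - 1|^p + (p-1)|w_d|^{p'}$, together with the relation $w_d = R_2^{1-N} v_d$ from \eqref{eq:relation_w_v} and the first equation of \eqref{eq:shooting}, which gives $v_d(r) = r^{N-1}\varphi_p(u_d'(r))$, hence $w_d(r) = (r/R_2)^{N-1}\varphi_p(u_d'(r))$ and so $|w_d(\varepsilon R_2)|^{p'} = (\varepsilon)^{(N-1)p'}|u_d'(\varepsilon R_2)|^p$. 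Thus
\[
\ell_d(\varepsilon R_2)^2 = |u_d(\varepsilon R_2) - 1|^p + (p-1)\varepsilon^{(N-1)p'}|u_d'(\varepsilon R_2)|^p.
\]
Since $u_d(\varepsilon R_2) + |u_d'(\varepsilon R_2)| \to +\infty$ and $\varepsilon > 0$ is fixed, at least one of the two non-negative quantities $|u_d(\varepsilon R_2) - 1|$ and $|u_d'(\varepsilon R_2)|$ must diverge, so the right-hand side tends to $+\infty$. Hence there exists $\tilde d > 1$ (which we may take $\geq d^*$) such that $\ell_{\tilde d}(\varepsilon R_2)^2 > 1$, which is \eqref{eq:tilde_d}.

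I do not anticipate a serious obstacle here: this is essentially a bookkeeping lemma that repackages the already-proved elastic property in the $(\ell,\phi)$ coordinates adapted to the rescaled system \eqref{sys3}. The only point requiring mild care is making sure the evaluation point $\varepsilon R_2$ genuinely lies in the range $[R_1, r_1(d)]$ where \eqref{eq:explosion_d} applies — which is exactly what Proposition \ref{Pr:d*} guarantees by forcing $r_1(d) = R_2$ for large $d$ — and correctly tracking the power of $\varepsilon$ coming from the change of unknown $v_d \mapsto w_d$, which is harmless since $\varepsilon$ is a fixed positive constant independent of $d$.
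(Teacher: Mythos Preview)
Your computation of $\ell_d(\varepsilon R_2)^2$ via \eqref{eq:ellequadro} and \eqref{eq:relation_w_v} is correct, and the overall strategy matches the paper's. However, there is a gap in the step where you invoke Proposition~\ref{Pr:d*}: from $u_d'<0$ on $(R_1,R_2]$ you conclude that $u_d$ ``never vanishes there'' and hence $r_1(d)=R_2$. This implication does not hold. A strictly decreasing function starting at $d>1$ may perfectly well cross zero; once $u_d\le 0$ one has $\hat f(u_d)=0$, so $v_d$ stays constant and $u_d'$ remains negative --- nothing in Proposition~\ref{Pr:d*} is contradicted. Thus the inclusion $\varepsilon R_2\in[R_1,r_1(d)]$, which you need in order to apply \eqref{eq:explosion_d} at $r=\varepsilon R_2$, is not justified as written.

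The paper closes this gap with a trivial case distinction that you can simply insert: if $u_d(\varepsilon R_2)\le 0$ then $|u_d(\varepsilon R_2)-1|\ge 1$, so \eqref{eq:ellequadro} gives $\ell_d(\varepsilon R_2)^2\ge 1$ directly (and strictly $>1$, since $u_d(\varepsilon R_2)=0$ and $w_d(\varepsilon R_2)=0$ would force $u_d\equiv 0$ by uniqueness). If instead $u_d(\varepsilon R_2)>0$, then $u_d>0$ on all of $[R_1,\varepsilon R_2]$ (because once $u_d$ reaches zero it stays nonpositive), hence $\varepsilon R_2<r_1(d)$ and your application of \eqref{eq:explosion_d} goes through verbatim. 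With this two-line fix your proof is complete and coincides with the paper's; note that Proposition~\ref{Pr:d*} is then not needed at all.
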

\begin{proof}
For every $d>1$ we distinguish two cases. If $u_{d}(\varepsilon R_2)\le 0$, then $|u_{d}(\varepsilon R_2)-1|\ge 1$. Consequently, by \eqref{eq:ellequadro} and the fact that $u_d(\varepsilon R_2)$ and $w_d(\varepsilon R_2)$ cannot vanish simultaneously by the uniqueness of the solution,
$$\ell_d^2(\varepsilon R_2)=|u_d(\varepsilon R_2)-1|^p+(p-1)|w_d(\varepsilon R_2)|^{p'}>1.$$
If $u_{d}(\varepsilon R_2) > 0$, then $u_d(r)>0$ for every $r\in [R_1,\varepsilon R_2]$ (since by the equation in \eqref{ode2} and the definition of $\hat f$, if $u(\bar r)=0$, $u(r)\le 0$ for all  $r\in [\bar r, R_2]$), and so $\varepsilon R_2 < r_1(d)$, by the definition of $r_1(d)$ in \eqref{eq:r1def}. Hence, by the elastic-type property \eqref{eq:explosion_d}, 
$u_d(\varepsilon R_2)+|u'_d(\varepsilon R_2)|\to +\infty$ as $d\to +\infty$. So, in correspondence to any constant $M$, there exists $\tilde d$ for which
$u_{\tilde d}(\varepsilon R_2)+|u'_{\tilde d}(\varepsilon R_2)|\ge M$. This implies that  
$$
\ell_{\tilde d}^2(\varepsilon R_2)=|u_{\tilde d}(\varepsilon R_2)-1|^p+(p-1)\varepsilon^{(N-1)p'}|u'_{\tilde d}(\varepsilon R_2)|^{p}>1,
$$
where the last inequality holds for $M$ large enough.
\end{proof}

To proceed, we shall adapt an argument introduced in \cite{BosZan13} (see also \cite[Section~2]{BG} and \cite[Section~2.4]{ABF}). The idea is the following. Since the solutions $(u,w)$ of problem \eqref{sys3} wind clockwise around the point $(1,0)$ in the $(u,w)$ phase plane  (being $\phi'>0$ by \eqref{sys4a}), we can define two spiral-like curves 
$$\phi\mapsto (\ell_\pm^{2/p}(\phi)\cos_p(\phi),-\ell_\pm^{2/p'}(\phi)\sin_p(\phi))$$ (see \eqref{eqm} below) which bound the solution from below and from above respectively, in the phase plane. The control of the spirals allows to prove that there exists $\hat d_k>1$ for which the solution $(u_{\hat d_k},w_{\hat d_k})$ shot from $u(0)=\hat d_k$ is contained in an annular-like portion (cf. $\mathcal A_k$ below) of the phase plane centered at $(1,0)$ for all $r\in[\varepsilon R_2, \bar r]$, for some $\bar r\le R_2$ (i.e., there exist $\check\ell_k$ and $\hat\ell_k$ as in \eqref{choice} below for which $\ell(r)\in[\check\ell_k,\hat\ell_k]$ for all $r\in[\varepsilon R_2, \bar r]$). This in turn implies that the solution $(u_{\hat d_k}, w_{\hat d_k})$ performs around $(1,0)$ more than $k$ half-turns (i.e., $\phi_{\hat d_k}(R_2)>k\pi_p$, or equivalently $\theta_{\hat d_k}(R_2)>k\pi_p$).
 
\begin{proof}[Proof of Proposition \ref{prop:d_k}] Fix an integer $k\ge1$ and a real number $\varepsilon>0$. 
With reference to \eqref{sys4a}, let
$$
\mathrm{P}(r,\ell,\phi) =: \ell S\left(r,\ell^{\tfrac{2}{p}}\cos_p(\phi),
	-\ell^{\tfrac{2}{p'}}\sin_p(\phi)\right)
$$
and
$$
\Theta(r,\ell,\phi) =: U\left(r,\ell^{\tfrac{2}{p}}\cos_p(\phi);
-\ell^{\tfrac{2}{p'}}\sin_p(\phi)\right),
$$
hence, by \eqref{eq:ellequadro}, we have
$$
S(r,u,w) = \frac{p}{2}\, \left(\frac{R_2}{r} \right)^{(N-1)(p'-1)} \varphi_{p'}(w) \cdot 
\frac{\varphi_p(u-1)-\left(\dfrac{r}{R_2}\right)^{(N-1)p'} \hat f(u) }{\vert u-1 \vert^p + (p-1) \vert w \vert^{p'}}
$$
and
$$
U(r,u,w) = \left(\frac{R_2}{r} \right)^{(N-1)(p'-1)} \cdot \frac{(p-1) \vert w \vert^{p'}+\left(\dfrac{r}{R_2}\right)^{(N-1)p'} \hat f(u)(u-1) }{\vert u-1 \vert^p + (p-1) \vert w \vert^{p'}}.
$$
We also write
$$
\mathcal{M}_-(u,w) := \begin{cases}
\displaystyle{\frac{p}{2}\cdot \frac{\varphi_{p'}(w)\left( \varphi_p(u-1) - \hat f(u)\right)}{(p-1)\vert w \vert^{p'} + \hat f(u)(u-1)}}  &\text{if } w(u-1) \geq 0, \vspace{0.2cm} \\
\displaystyle{\frac{p}{2}\cdot \frac{\varphi_{p'}(w)\left( \varphi_p(u-1) - \varepsilon^{(N-1)p'} \hat f(u)\right)}{(p-1)\vert w \vert^{p'} + \varepsilon^{(N-1)p'} \hat f(u)(u-1)}} &\text{if } w(u-1) \leq 0,
\end{cases} 
$$ 
and
$$
\mathcal{M}_+(u,w) := \begin{cases}
\displaystyle{\frac{p}{2}\cdot \frac{\varphi_{p'}(w)\left( \varphi_p(u-1) - \varepsilon^{(N-1)p'} \hat f(u)\right)}{(p-1)\vert w \vert^{p'} + \varepsilon^{(N-1)p'} \hat f(u)(u-1)}} &\text{if } w(u-1) \geq 0, \vspace{0.2cm}\\
\displaystyle{\frac{p}{2}\cdot \frac{\varphi_{p'}(w)\left( \varphi_p(u-1) - \hat f(u)\right)}{(p-1)\vert w \vert^{p'} + \hat f(u)(u-1)}} &\text{if } w(u-1) \leq 0.
\end{cases} 
$$ 
A straightforward calculation shows that 
\begin{equation}\label{msu}
\mathcal{M}_-(u,w) \leq \frac{S(r,u,w)}{U(r,u,w)} \leq \mathcal{M}_+(u,w)
\end{equation}
for all $r \in [\varepsilon R_2,R_2]$ and all $(u,w) \in \mathbb{R}^2 \setminus \{(1,0)\}$. 

Then, we define $\ell_\pm(\phi;\bar\phi,\bar\ell)$ as the solution of 
\begin{equation}\label{eqm}
\begin{cases}
\frac{d\ell}{d\phi} = \ell \mathcal{M}_\pm \left(\ell^{2/p}\cos_p(\phi),-\ell^{2/p'}\sin_p(\phi)\right)\\
\ell_\pm(\bar\phi;\bar\ell,\bar\phi) = \bar\ell
\end{cases}
\end{equation}
and we set, for any $\bar\ell > 0$,
$$
m_k(\bar\ell) := \inf_{\bar\phi \in [0,2\pi_p),\;\phi \in [\bar\phi, \bar\phi + k \pi_p]} \ell_-(\phi;\bar\ell,\bar\phi),
$$
$$
M_k(\bar\ell) := \sup_{\bar\phi \in [0,2\pi_p), \; \phi \in [\bar\phi, \bar\phi + k \pi_p]} \ell_+(\phi;\bar\ell,\bar\phi).
$$
Since $\ell\equiv 0$ is a solution of the equation in \eqref{eqm}, by continuous dependence,  
$$\lim_{\bar\ell\to 0^+}M_k(\bar\ell)=0.$$ 
Moreover, by uniqueness $m_k(\bar\ell)>0$ for every $\bar\ell>0$. Hence, we can choose $0 < \check{\ell}_k < \ell^*_k < \hat \ell_k$ such that
\begin{equation}\label{choice}
0 < \check{\ell}_k < m_k(\ell^*_k) \leq \ell_k^* \leq M_k(\ell_k^*) < \hat \ell_k  < 1.
\end{equation}
Now, since $\ell_1(r) = 0$ for every $r\in[R_1,R_2]$ and $\ell_{\tilde d}(\varepsilon R_2) > 1$ (see \eqref{eq:tilde_d}), by continuity (see Corollary \ref{cor:limit_rho_as_d_to0})
\begin{equation}\label{eq:ell_d_k}
\mbox{there exists } \hat d_k \in (1,\tilde d) \mbox{ such that } \ell_{\hat d_k}(\varepsilon R_2) = \ell_k^*.
\end{equation}

Finally, we define
$$
\delta_k^* := \inf_{(u,w)\in \mathcal A_k} 
\frac{\varepsilon^{(N-1)p'} \hat f(u)(u-1) + (p-1)\vert w \vert^{p'}}{\vert u-1 \vert^p + (p-1)\vert w \vert^{p'}},
$$
where $\mathcal A_k:=\{(u,w)\in\mathbb R^2\,:\:\check{\ell}_k \leq \sqrt{|u-1|^p+(p-1)|w|^{p'}} \leq \hat \ell_k\}$.
We are now in a position to prove that, if $R_1 < \varepsilon R_2$ and
\begin{equation}\label{eq:R*def}
R_2 > R_*(k,\varepsilon) := \frac{k\pi_p}{(1-\varepsilon)\delta_k^*},
\end{equation}
then
\begin{equation}\label{finalclaim}
\phi_{\hat d_k}(R_2) - \phi_{\hat d_k}(\varepsilon R_2) > k \pi_p
\end{equation}
and so $\phi_{\hat d_k}(R_2) > k\pi_p$, being $\phi_{\hat d_k}(\varepsilon R_2)>\phi_{\hat d_k}(R_1)= 0$ by the monotonicity of $\phi_d$. 
In particular, we have that $\hat d_k<d^*$, since by \eqref{u-dec} $\phi_{d}(R_2)<\pi_p$ for every $d \geq d^*$, and in turn, thanks to \eqref{eq:theta_phi}, that \eqref{eq:thetaR2>} holds, thus concluding the proof.

In order to prove \eqref{finalclaim}, we distinguish two cases. If $\ell_{\hat d_k}$ satisfies $\ell_{\hat d_k}(r) \in [\check{\ell}_k,\hat \ell_k]$ for any $r \in [\varepsilon R_2, R_2]$, i.e. $(u_{\hat d_k}(r),w_{\hat d_k}(r)) \in \mathcal{A}_k$ for every $r \in [\varepsilon R_2, R_2]$,
we easily conclude. Indeed, by the expression of $\phi'$ in \eqref{sys4a}, the definition of $\delta_k^*$ and the hypothesis on $R_2$,
\begin{multline*}
\phi_{\hat d_k}(R_2) - \phi_{\hat d_k}(\varepsilon R_2) 
= \int_{\varepsilon R_2}^{R_2} \phi_{\hat d_k}'(r)\,dr \\
\geq \int_{\varepsilon R_2}^{R_2} \frac{\varepsilon^{(N-1)p'} \hat f(u_{\hat d_k})(u_{\hat d_k}-1) + (p-1)\vert w_{\hat d_k} \vert^{p'}}{\vert u_{\hat d_k}-1 \vert^p + (p-1)\vert w_{\hat d_k} \vert^{p'}} \,dr 
\geq R_2 (1-\varepsilon) 
\delta_k^* > k\pi_p.
\end{multline*}
Otherwise, we let $\bar r \in [\varepsilon R_2, R_2)$ be the largest value such that $\ell_{\hat d_k}(r) \in [\check{\ell}_k,\hat \ell_k]$ for any
$r \in [\varepsilon R_2, \bar{r}]$. 
Such $\bar r$ exists because, by \eqref{eq:ell_d_k} and \eqref{choice}, $\ell_{\hat d_k}(\varepsilon R_2)\in [\check \ell_k,\hat \ell_k]$.
In this case we prove that
$$
\phi_{\hat d_k}(\bar r) - \phi_{\hat d_k}(\varepsilon R_2) > k\pi_p,
$$
implying \eqref{finalclaim} again in view of the monotonicity of $\phi_{\hat d_k}$.

Suppose by contradiction that this is not true and, just to fix the ideas, that $\ell_{\hat d_k}(\bar r) = \hat \ell_k$ (in the case $\ell_{\hat d_k}(\bar r) = \check \ell_k$ the argument is analogous). Observe also that, again by the monotonicity of 
$\phi_{\hat d_k}$, we have $\phi_{\hat d_k}(r) - \phi_{\hat d_k}(\varepsilon R_2) \leq k \pi_p$ for any $r \in [\varepsilon R_2,\bar r]$.
Now, we consider the function $\gamma(\phi) = \ell^+(\phi;\ell_k^*,\bar\phi)$, where $\bar\phi \in [0,2\pi_p)$ is such that
$\phi_{\hat d_k}(\varepsilon R_2) \equiv \bar\phi \mod 2\pi_p$. By the definition of $M_k(\ell_k^*)$ and \eqref{choice}, it holds
$$
\gamma(\phi) < \hat \ell_k\quad \mbox{ for every } \phi \in [\bar\phi,\bar\phi + k\pi_p];
$$
moreover, from \eqref{msu} and \eqref{eqm} we obtain non ricordo come ottenere la disuguaglianza qui sotto
$$
\mathrm{P}(r,\gamma(\phi),\phi) \leq \gamma'(\phi)\Theta(r,\gamma(\phi),\phi) \;\; \mbox{ for every } r \in 
[\varepsilon R_2,\bar{r}] \mbox{ and } \phi \in [\bar\phi,\bar\phi + k \pi_p].
$$
By \cite[Lemma 2.8]{ABF} (cf. \cite[Corollary 5.1]{BosZan13}), this implies that 
$$
\ell_{\hat d_k}(r) \leq \gamma(\phi_{\hat d_k}(r)) \quad \mbox{ for every } r \in [\varepsilon R_2,\bar{r}],
$$
so that $\ell_{\hat d_k}(\bar{r}) \leq M_k(\ell_k^*) < \hat \ell_k$, a contradiction.

In conclusion, we have proved that if $R_1<\varepsilon R_2$ and $R_2$ satisfies \eqref{eq:R*def}, then there exists $\hat d_k\in(1,d^*)$ such that \eqref{finalclaim} holds. As already noticed, this is enough to conclude since, by \eqref{eq:theta_phi}, this implies that \eqref{eq:thetaR2>} holds. 
\end{proof}

\section*{Acknowledgments}
\noindent A. Boscaggin and B. Noris acknowledge the support of the project ERC Advanced Grant 2013 n. 339958: ``Complex Patterns for Strongly Interacting Dynamical Systems --
COMPAT''.  F. Colasuonno acknowledges the supports of the Laboratoire Ami\'enois de Math\'ematique Fondamentale et Appliqu\'ee for her visit at Amiens, and of the ``National Group for Mathematical Analysis, Probability and their Applications'' (GNAMPA - INdAM) for her participation in the event ``Intensive week of PDEs at Spa'', where parts of this work have been achieved.
A. Boscaggin and F. Colasuonno were partially supported by the INdAM - GNAMPA Projects 2017 ``Dinamiche complesse per il problema degli
$N$-centri'' and ``Regolarit\`a delle soluzioni viscose per equazioni a derivate parziali non lineari degeneri", respectively. 

\bibliographystyle{abbrv}
\bibliography{biblio}

\end{document}